\newcommand{\tr}{\rm Tr}
\newcommand{\ra}{\rightarrow}
\newcommand{\E}{\mathbb E}
\newcommand{\Pp}{\mathbb P}
\newcommand{\R}{\mathbb R}
\newcommand{\N}{\mathbb N}
\newcommand\Car{\mathds{1}}
\renewcommand\phi\varphi 
\newcommand\eps{\varepsilon}
\spnewtheorem{assum}{Assumptions}{\bf}{\it}
 \journalname{}
\begin{document}

\title{Large deviations  for the largest eigenvalue of  sub-Gaussian matrices
\thanks{This project  was partially supported by Labex MILYON and has received funding from the European Research Council (ERC) under the European Union
Horizon 2020 research and innovation program (grant agreements No. 692452 and No. 884584).}}


\author{Fanny Augeri         \and
        Alice Guionnet  \and \\  Jonathan Husson}

\authorrunning{Augeri, Guionnet and Husson} 

\institute{Fanny  Augeri \at Weizmann Institute of Science, 234 Herzl Street, Rehovot, 7610001, Israel. \\
              \email{fanny.augeri@weizmann.ac.il}           
           \and
           Alice Guionnet \at \'Ecole Normale Supérieure de Lyon, 46 all\'ee d'Italie, 69364 Lyon, France. \\
               \email{Alice.Guionnet@ens-lyon.fr}
		\and
           Jonathan Husson \at \'Ecole Normale Supérieure de Lyon, 46 all\'ee d'Italie, 69364 Lyon, France.\\
		\email{Jonathan.Husson@ens-lyon.fr}
 }

\date{}

\maketitle

\begin{abstract}
We establish large deviations estimates for the largest eigenvalue of Wigner matrices with sub-Gaussian entries. 
Under technical assumptions, we show that the large deviation behavior of the largest eigenvalue is universal for small deviations, in the sense that the speed and the rate function are the same as in the case of the GOE. In contrast, in the regime of very large deviations, we obtain a non-universal rate function,
thus establishing the existence of a transition between two different large deviation mechanisms.

\keywords{Random matrices \and Large deviations \and Spherical integrals}
 \subclass{ 60B20 \and 60F10}
\end{abstract}

\section{Introduction}
 In a breakthrough paper \cite{Wig58}, Wigner showed  that the empirical distribution of the eigenvalues of a Wigner matrix converges to the semi-circle law provided the off-diagonal entries have a finite second moment. Following the pioneering work of  K\'omlos and F\H{u}redi  \cite{FuKo}, it was proved in  \cite{BY} that assuming the Wigner matrix has centered entries, the largest eigenvalue converges to the right edge of the support of the semi-circle law  if and only if  the fourth moment of the off-diagonal entries is finite. But what is the probability that  the empirical measure or the largest eigenvalue have a different  behavior ? Analyzing the probability that the largest eigenvalue of a random matrix takes an unexpected value  is a challenging question, with many applications in statistics \cite{fey},  mobile communications systems \cite{fey,BDMN11} or the energy landscape of disordered systems \cite{MontaBA,Biro}.   This turns out to be a much more challenging question than to analyze  the typical behavior which could be only answered so far for very specific models. It was first solved in the case of Gaussian ensembles, such as the Gaussian Unitary Ensemble (GUE) and Gaussian Orthogonal Ensemble (GOE),   where the joint law of the eigenvalues is explicit. In these cases, large deviations principles were derived for the empirical distribution of the eigenvalues and the largest eigenvalue
 in  \cite{BAG97} and \cite{BADG01} merely by Laplace's method, up to taking care of the singularity of the interaction. The question was revived 
 in a breakthrough paper by Bordenave and Caputo \cite{BordCap} who considered Wigner matrices with entries with tails  heavier than in the Gaussian case. They proved a large deviations principle for the law of the empirical measure by a completely different argument based on the fact that deviations are created by  a relatively small number of  large entries.
 These large deviations have a smaller speed than in the   Gaussian case. This phenomenon was shown to hold as well for the largest eigenvalue by one of the authors \cite{fanny}.  

Yet, the case of sub-Gaussian entries remained open and the general mechanism which creates large  deviations  mysterious. Last year, two of the authors  showed in \cite{HuGu} that if the Laplace transform of the entries is pointwise bounded  from above by the one of the GUE or GOE, then a large deviations principle holds with the same rate function as in the Gaussian case. This special case of entries,  which was said to have \textit{sharp sub-Gaussian tails}, includes  Rademacher variables  and uniform variables. Yet, many entries with sub-Gaussian tails are not sharp, as for instance sparse entries which are obtained by multiplying  a Gaussian variable  with  an independent  Bernoulli random variable.  In this article, we investigate this general setting. We derive large deviations estimates for the largest eigenvalue of Wigner matrices with sub-Gaussian entries. In particular, we show that the rate function of this large deviations estimates is different from the one of the GOE. 

This article is restricted to  matrices with entries which  are  centered   and covariance of order of the inverse of the  dimension. Non centered models, such as the adjacency matrix of Erd\H{o}s-R\'enyi matrices, may have different deviations properties  as the mean of the entries can be seen as a rank one deformation of the later.
The large deviation behavior of the extreme eigenvalues of Erd\H{o}s-Rényi graphs has attracted some attention recently. Inside the regime where the average degree goes to infinity, that is $np \gg 1$ where $n$ is the number of vertices and $p$ is the edge-probability,  Cook and Dembo \cite{CoDe}  have computed the tail distribution of the operator norm for $p \gg  n^{-1/2}$, and proved that it is governed by a certain mean-field variational problem. Very recently, the joint large deviations of the extreme eigenvalues were established in \cite{BBG} for the ``localized'' regime where $1 \ll np \ll \sqrt{\log n}$. The case were $np$ is of order one is still open and might be studied   by our techniques.

 We will consider hereafter a $N\times N$ symmetric random matrix $X_N$ with independent entries $(X_{ij})_{i\le j}$ above the diagonal so that $\sqrt{N}X_{ij}$
has law $\mu$ for all  $i<j$ and $\sqrt{N/2}X_{ii}$ has law $\mu$ for all $i$.  In particular, the variance profile is the same as the one of the GOE. We assume that $\mu$ is centered and has a variance equal to $1$. For a real number $x$, let 
$$  \psi(x) = \frac{1}{x^2} \log \int e^{xt} d\mu(t)\,.$$
 $\psi$ is a continuous function on the real line such that $\psi(0) =1/2$.
 Assume that $\mu$ is sub-Gaussian so that
\begin{equation}\label{subGauss} \frac{A}{2}:=\sup_{x\in\mathbb R}\psi(x)<+\infty.\end{equation}
The case where $A=1$ is the case of \textit{sharp sub-Gaussian tails}  studied in \cite{HuGu}. We investigate here the case where $A>1$ and we show the following result.  
\begin{theorem}\label{main}Denote by  $\lambda_{X_N}$ the largest eigenvalue of $X_N$.
Under some technical assumptions, there exist a good rate function $I_\mu : \R \to [0,+\infty]$ and a  set $\mathcal{O}_\mu\subset \R $ such that $(-\infty,2] \cup [x_\mu,+\infty) \subset \mathcal{O}_\mu$ for some $x_\mu\in (2,+\infty)$ and such that for any $x\in \mathcal{O}_\mu$,
$$\lim_{\delta\rightarrow 0}\liminf_{N\ra+\infty}\frac{1}{N}\log \Pp\left(|\lambda_{X_N}-x|\le\delta\right)=\lim_{\delta\rightarrow 0}\limsup_{N\ra+ \infty}\frac{1}{N}\log \Pp\left(|\lambda_{X_N}-x|\le\delta\right)=-I_\mu(x).$$
The rate function $I_\mu$ is infinite on $(-\infty,2)$  and satisfies
 $$ I_\mu(x)\underset{x\rightarrow +\infty}{\sim} \frac{1}{4A}x^2.$$
If $A\in (1,2)$, then $[2, \sqrt{A-1}+1/\sqrt{A-1}]\subset \mathcal O_\mu$ and $I_\mu$ coincides on this interval with the rate function of the GOE, that is, 
\begin{equation} \label{GOE} I_\mu(x)=\frac{1}{2}\int_2^x\sqrt{y^2-4} dy=:I_{GOE}(x).\end{equation}
Moreover, for all $x\ge 2$, $I_\mu(x)\le I_{GOE}(x)$.
\end{theorem}
The technical assumptions include the case where $\psi$ is increasing (which holds in the case of sparse Gaussian entries) and the case where the maximum of $\psi$ is achieved on $\R$ at a unique point in a  neighborhood of which it is strictly concave.  In the later case, $I_{\mu}(x)$ only depends on $A$ for $x$ large enough.

The method introduced in \cite{HuGu} is based on a tilt of the measure by spherical integrals and therefore the estimation of the annealed spherical integrals (given by the average of spherical integrals over the entries of the matrix $X_{N}$). In the case of  sharp  sub-Gaussian entries, the annealed spherical integrals are easy to estimate
because they concentrate on delocalized vectors. In the general case we deal with in this article, estimating the annealed spherical integral becomes much more complicated and interesting because it can concentrate on localized vectors, at least in {a regime corresponding to sufficiently large deviations}.  This new phenomenon comes  with a  ``phase'' transition  at least when $A<2$, since then for small deviations the annealed spherical integral concentrates on delocalized vectors whereas for large deviations it concentrates on more localized vectors. This fact is reflected in the  eigenvector of the largest eigenvalue when the later is conditioned to be large, see section \ref{section:loc}. We show that it is delocalized for sharp sub-Gaussian entries, whereas otherwise it localizes for large enough deviations. The existence of such a transition makes it difficult to compute large deviations  on the whole real line, but in fact our formulas may  just be wrong then. For instance, our formulas would  predict a convex rate function, which may not always be the case.

\subsection{Assumptions}  We now describe more precisely our assumptions.

\begin{assum}\label{AG} Let $\mu \in\mathcal{P}(\R)$ be a symmetric probability measure with unit variance. We denote by $L$ its log-Laplace transform,
 $$ \forall x \in \R, \ L(x)=\log \int e^{x t}d\mu(t),$$
and $\psi(x) = L(x)/x^2$. We assume that $\mu$ is sub-Gaussian in the sense that
$$\frac{A}{2} : = \sup_{x\in \R} \psi(x)<
+\infty,$$
and we define $B\geq 0$ by
$$\frac{B}{2} :=\lim_{|x|\ra+\infty}\psi (x).$$
We assume moreover that $L(\sqrt{.})$ is a Lipschitz function and that $\mu$ does not have sharp sub-Gaussian tails, meaning that $A>1$.
\end{assum}
We describe below a few examples of probability measures $\mu$ which satisfy the above assumptions. In each of these cases, the fact that $L(\sqrt{.})$ is Lipschitz is clear and left to the reader.
\begin{example}\label{example}
 \begin{itemize}
\item (Combination of Gaussian and Rademacher laws). Let
$$\mu(dx)=a \frac{e^{-\frac{1}{ 2B} x^{2} }}{\sqrt{2\pi B}} dx+(1-a)\frac{1}{2}(\delta_{-b}+\delta_{+b})$$
where $a,b,B$ are non negative real numbers such that $a \in (0,1)$ and $a B+(1-a)b^{2}=1$. Then, for all $x \in \R$,
$$L_{\mu}(x)=\log\Big(a e^{\frac{B}{2} x^{2}}+(1-a) \cosh (b x)\Big)\,.$$
If $B>1$ and $b\in(0,1)$ we see that our conditions are fulfilled and $A=B$.
\item (Sparse Gaussian case). Let $\mu$ be the law of  $\zeta \Gamma$ with $\zeta$ a Bernoulli variable of parameter $p\in(0,1)$ and $\Gamma$ a centered Gaussian variable with variance $1/p$. For any $x\in\R$,
$$L_\mu(x)=\log\big(p e^{\frac{x^2}{2p}}+1-p\big)$$
so that $A=B=\frac{1}{p}$.
\item (Combination of Rademacher laws). Let $$\mu=\sum_{i=1}^{p}\frac{\alpha_{i}}{2}(\delta_{\beta_{i}}+\delta_{-\beta_{i}})$$ with $\alpha_{i}\ge 0$ , $\beta_i \in \R$ and $p\in\N$ 
so that $\sum\alpha_{i}=1, \sum \alpha_{i}\beta_{i}^{2}=1$. Since $\mu$ is compactly supported $B=0$. The fact that $\mu$ does not have sharp sub-Gaussian tails means that there exist some $t$ and $A>1$ such that
$$\sum_{i=1}^p \alpha_{i}\cosh(\beta_{i }t)\ge e^{A\frac{t^{2}}{2}}\,.$$
The latter is equivalent to
$$\sum_{i=1}^p\frac{\alpha_{i}}{2}e^{\frac{\beta_{i}^{2}}{2 A}}\Big( e^{-\frac{A}{2}(t-\frac{\beta_{i}}{A})^{2}}+ e^{-\frac{A}{2}(t+\frac{\beta_{i}}{A})^{2}}\Big)\ge 1\,.$$
This inequality holds as soon as $\alpha_{i}e^{\frac{\beta_{i}^{2}}{2 A}}\ge 2$ for some $i\in\{1,\ldots,p\} $ by taking
$t=\frac{\beta_{i}}{A}$. This can be fulfilled if $\beta_{i}$ is large enough while $\alpha_i \beta_i^2 <1$. We also see with this family of examples that $A$ can be taken arbitrarily large even if $B=0$ (take e.g $p=2$, $A=\beta_{1}, t=1,\alpha_{1}=(2\beta_{1}^{2})^{-1}$, $e^{\beta_{1}/2}\ge 4\beta_{1}^{2}, \beta_{2}^{2}=(2-\beta_{1}^{-2})^{-1}, \alpha_{2}=1-\alpha_{1}$).
\end{itemize}
\end{example}
 Let $\mathcal{H}_N$ be the set of real symmetric matrices of size $N$. We denote for any $A\in\mathcal{H}_N$ by $\lambda_A$ its largest eigenvalue, $||A||$ is spectral radius and by $\hat \mu_A$ the empirical distribution of its eigenvalues, that is
$$ \hat \mu_A = \frac{1}{N}\sum_{i=1}^N \delta_{\lambda_i},$$
where $\lambda_1,\ldots,\lambda_N$ are the eigenvalues of $A$.
We make the following assumption of concentration of the empirical distribution of the eigenvalues at the scale $N$.
 \begin{assum} \label{ass} 
The empirical distribution of the eigenvalues $\hat \mu_{X_N}$ concentrates at the scale $N$:
\begin{equation}\label{concspmeasure} \limsup_{N \to +\infty} \frac{1}{N} \log \Pp\left( d( \hat \mu_{X_N}, \sigma) > N^{-\kappa} \right) = - \infty, \end{equation}
for some $\kappa>0$, where $d$ is a distance compatible with the weak topology and $\sigma$ is the semi-circle law, defined by
$$ \sigma(dx) = \frac{1}{2\pi}\sqrt{4-x^2} \Car_{|x|\leq 2}dx.$$

\end{assum}
\begin{remark}
\begin{enumerate}
\item From \cite[Lemmas 1.8, 1.11]{HuGu}, we know that Assumption \ref{ass} is fulfilled  if  $\mu$ is either compactly supported, or if $\mu$ satisfies a logarithmic  Sobolev inequality in the sense that there exists $c>0$ so that for any smooth function $f : \R \to \R$, such that $\int f^2 d\mu =1$,
$$\int f^{2}\log f^{2} d\mu\le c\int \|\nabla f\|_2^{2} d\mu\,.$$
\item If  $\mu$ is a symmetric sub-Gaussian probability measure on $\R$ with log-concave tails in the sense that $ t\mapsto \mu(|x|\ge t)$  is a log-concave function, then the Wigner matrix $X_N$ satisfies Assumption \ref{ass}.  In particular, if $B$ is a Wigner matrix with Bernoulli entries with parameter $p$ and $\Gamma$ is a GOE matrix, then the sparse Gaussian matrix $B \circ \Gamma/\sqrt{p}$, where $\circ$ is the Hadamard product, satisfies  Assumption \ref{ass}. We refer the reader to section \ref{concappendix} of the appendix for more details.

\end{enumerate}
\end{remark}
{ Let us remark  that when $\mu$ is sub-Gaussian,  \eqref{subGauss}, the spectral radius of $X_{N}$ is exponentially  tight
 \cite[Lemma 5.1]{HuGu}  and that this fact remains true under any tilted measure 
\begin{equation}\label{deftilt}\Pp^{(e,\theta)} = \frac{e^{\theta N\langle e, X_N e\rangle}}{\E_X(e^{\theta N\langle e, X_N e\rangle})} d\Pp(X),\end{equation} where $e\in\mathbb{S}^{N-1}$ and $\theta \geq 0$. More precisely, we can make the following remark.}
\begin{remark}\label{tensionexporadius}{If  \eqref{subGauss} holds, then for any $\theta \geq 0$,
$$\lim_{K\to +\infty} \limsup_{N\to +\infty}\sup_{e \in \mathbb{S}^{N-1}} \frac{1}{N} \log \Pp^{(e,\theta)}\big( || X_N|| \geq K \big) = - \infty.$$}
\end{remark}
\subsection{Statement of the results and scheme of the proof}
As in \cite{HuGu}, our approach to derive  large deviations estimates is based on a tilting  of the law of the Wigner matrix $X_N$ by spherical integrals.
Let  us recall the definition of spherical integrals. For any $\theta \geq 0$, we define
$$I_N(X_N, \theta)=\E_e[ e^{\theta N\langle e, X_N e\rangle}]$$
where $e$ is uniformly sampled on the sphere $\mathbb S^{N-1}$ with radius one. 
The asymptotics of 
$$J_N(X_N,\theta)=\frac{1}{N}\log I_N(X_N,\theta)$$ 
were studied in \cite{GuMa05} where  the following result was proved.
\begin{theorem}\cite[Theorem 6]{GuMa05}\label{myl}
Let $(E_N)_{N \in \N}$ be a sequence of $N \times N$ real symmetric matrices  such that:
\begin{itemize}
\item The sequence of empirical measures $\hat\mu_{E_N}$ converges weakly to a compactly supported measure $\mu$.
\item There is a real number $\lambda_{E}$ such that  the sequence of the largest eigenvalues $\lambda_{E_N}$ converges to $\lambda_{E}$.
\item $ \sup_N || E_N||<+\infty$.
\end{itemize}
For any $\theta \geq 0$, 
\[ \lim_{N \to + \infty} J_N(E_N,\theta) = J(\mu, \lambda_{E},\theta) \]
\end{theorem} 
The limit $J$ is defined as follows.  For a compactly supported probability measure $\mu \in\mathcal{P}(\R)$ we define its Stieltjes transform $G_\mu$ by 
\[ \forall z \notin \mathrm{supp}(\mu), \ G_{\mu}(z) := \int_{\R} \frac{1}{z-t} d \mu(t), \]
where $\mathrm{supp}(\mu)$ is the support of $\mu$.
Let $r_\mu$ denote the right edge of the support of $\mu$. Then  $G_{\mu}$ is a bijection from $(r_\mu, +\infty)$ to $\big(0, G_{\mu}(r_\mu)\big)$ where
$$G_{\mu}(r_\mu) = \lim_{t \downarrow r_\mu} G_{\mu}(t).$$
Let $K_{\mu}$ be the inverse of $G_\mu$ on $(0, G_{\mu}(r_\mu))$ and let
$$\forall z \in (0, G_{\mu}(r_\mu)), \ R_{\mu}(z) := K_{\mu}(z) - 1/z,$$
be the $R$-transform of $\mu$ as defined by Voiculescu in \cite{Vo5}. 
Then, the limit of spherical integrals is defined for any $\theta \ge 0$ and $x \ge r_\mu$ by,
$$
J( \mu,  x,\theta):= \theta v(\mu, x,\theta) -\frac{1}{2} \int \log\big(1+2\theta v(\mu,x, \theta) - 2\theta y\big) d\mu( y),
$$
with 
$$
 v(\mu, x, \theta) :=\begin{cases}
                                 R_\mu(2 \theta) & \text{ if } 0 \le 2 \theta \le G_{\mu}( x),\\
                                x - \frac{1 }{2\theta} & \text{ if } 2 \theta >  G_{\mu}( x).
                                \end{cases}
$$
In the case of the semi-circle law, we have
$$ G_\sigma(x)=\frac{1}{2}(x-\sqrt{x^2-4}), \ R_{\sigma}(x) =x.$$
We denote   by $J(x,\theta)$ as a short-hand for $J(\sigma, x,\theta)$. In the next lemma we compute explicitly $J(x,\theta)$, whose proof is left to the reader.

\begin{lemma}\label{calculJ} Let $\theta \geq 0$ and $x \geq 2$.
For $\theta\le \frac{1}{2} G_\sigma(x)$,
$$J(x,\theta)= \theta^2.$$
Whereas for $\theta\ge \frac{1}{2} G_\sigma(x)$,
$$J(x,\theta)=\theta x-\frac{1}{2} -\frac{1}{2} \log 2\theta -\frac{1}{2}\int \log(x-y)d\sigma(y).$$

\end{lemma}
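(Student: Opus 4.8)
The plan is to reduce the lemma to the single auxiliary identity giving the logarithmic potential of the semicircle law: for every $x\ge 2$,
$$\int \log(x-y)\,d\sigma(y)=\frac{x^{2}}{4}-\frac{x\sqrt{x^{2}-4}}{4}+\log\frac{x+\sqrt{x^{2}-4}}{2}-\frac{1}{2}.$$
To obtain this, differentiate the left-hand side in $x$ to get $G_{\sigma}(x)=\tfrac12(x-\sqrt{x^{2}-4})$, integrate using the elementary primitive $\int\sqrt{x^{2}-4}\,dx=\tfrac12 x\sqrt{x^{2}-4}-2\log(x+\sqrt{x^{2}-4})$, and fix the constant of integration from the asymptotics as $x\to+\infty$, where both sides equal $\log x+o(1)$ (note $\tfrac{x^{2}}{4}-\tfrac{x\sqrt{x^{2}-4}}{4}=\tfrac{x}{x+\sqrt{x^{2}-4}}\to\tfrac12$ and $\log\tfrac{x+\sqrt{x^{2}-4}}{2}=\log x+o(1)$).

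In the regime $\theta\ge\tfrac12 G_{\sigma}(x)$ one has $v(\sigma,x,\theta)=x-\tfrac1{2\theta}$ by definition, so inserting this into $J(\sigma,x,\theta)=\theta v-\tfrac12\int\log(1+2\theta v-2\theta y)\,d\sigma(y)$ turns the argument of the logarithm into $2\theta(x-y)$, and splitting $\log\bigl(2\theta(x-y)\bigr)=\log 2\theta+\log(x-y)$ yields the claimed formula directly (the auxiliary identity is not even needed here). In the regime $\theta\le\tfrac12 G_{\sigma}(x)$ one has $v(\sigma,x,\theta)=R_{\sigma}(2\theta)=2\theta$, so $J(x,\theta)=2\theta^{2}-\tfrac12\int\log(1+4\theta^{2}-2\theta y)\,d\sigma(y)$. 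Write $1+4\theta^{2}-2\theta y=2\theta(z-y)$ with $z:=2\theta+\tfrac1{2\theta}\ge 2$; since $G_{\sigma}$ is decreasing on $[2,+\infty)$ with $G_{\sigma}(2)=1$, the hypothesis forces $\theta\le\tfrac12$, hence $\sqrt{z^{2}-4}=\tfrac1{2\theta}-2\theta$. Applying the auxiliary identity at $x=z$ and using the elementary simplifications $\tfrac{z^{2}}{4}-\tfrac{z\sqrt{z^{2}-4}}{4}=2\theta^{2}+\tfrac12$ and $\tfrac{z+\sqrt{z^{2}-4}}{2}=\tfrac1{2\theta}$ gives $\int\log(1+4\theta^{2}-2\theta y)\,d\sigma(y)=\log 2\theta+\bigl(2\theta^{2}-\log 2\theta\bigr)=2\theta^{2}$, and therefore $J(x,\theta)=2\theta^{2}-\theta^{2}=\theta^{2}$.

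A variant avoiding the potential identity in the second regime: set $F(\theta)=\int\log(1+4\theta^{2}-2\theta y)\,d\sigma(y)$, differentiate under the integral sign, decompose $\tfrac{8\theta-2y}{1+4\theta^{2}-2\theta y}$ as a $y$-independent constant plus a $y$-independent multiple of $(z-y)^{-1}$, and integrate against $\sigma$ using $\int (z-y)^{-1}d\sigma(y)=G_{\sigma}(z)=2\theta$ (which holds precisely because $2\theta\le 1$); this gives $F'(\theta)=4\theta$ and $F(0)=0$, so $F(\theta)=2\theta^{2}$. The only real content is the auxiliary logarithmic-potential formula and correctly tracking the branch of $\sqrt{z^{2}-4}$ when $\theta\le\tfrac12 G_{\sigma}(x)$; everything else is the bookkeeping the authors leave to the reader.
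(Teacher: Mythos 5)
Your proof is correct. The paper leaves the proof of Lemma \ref{calculJ} to the reader, so there is no in-paper argument to compare against; your route --- substituting the two branches of $v(\sigma,x,\theta)$ into the definition of $J$ and evaluating the logarithmic potential $\int\log(x-y)\,d\sigma(y)$ of the semicircle law (or, in your variant, differentiating $F(\theta)=\int\log(1+4\theta^2-2\theta y)\,d\sigma(y)$ and using $G_\sigma\bigl(2\theta+\tfrac1{2\theta}\bigr)=2\theta$ for $2\theta\le 1$) --- is the standard verification, and the algebra checks out, including the constant of integration in the potential formula and the branch choice $\sqrt{z^2-4}=\tfrac1{2\theta}-2\theta$ forced by $2\theta\le G_\sigma(x)\le G_\sigma(2)=1$. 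One cosmetic remark: at the boundary $2\theta=G_\sigma(x)$ the paper's definition takes $v=R_\sigma(2\theta)$ rather than $x-\tfrac1{2\theta}$, but the two coincide there because $K_\sigma(2\theta)=x$, so both displayed formulas agree at that point and your case split covers the overlap without any gap.
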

To derive large deviations estimates using a tilt by spherical integrals, it is central to obtain the asymptotics of the annealed spherical integral $F_N(\theta)$ defined as, 
$$ F_N( \theta) = \frac{1}{N} \log \E_{X_{N}} \E_{e}[ \exp(N \theta \langle e, X_{N} e \rangle) ] \,.$$
In the following lemma, we obtain the limit of $F_N$ as the solution of a certain variational problem. We denote by $\overline{F}(\theta)$ and $\underline{F}(\theta)$ its upper and lower limits:
$$\overline{F}(\theta) = \limsup_{N\to +\infty} F_N(\theta),$$
$$\underline{F}(\theta) = \liminf_{N\to +\infty} F_N(\theta).$$
For any measurable subset $I \subset \R$, we denote by $\mathcal{M}(I)$ and $\mathcal{P}(I)$ respectively  the set of measures and the set of probability measures supported on $I$.
\begin{proposition}\label{simpleFint} Assume $X_N$ satisfies Assumptions \ref{AG} and \ref{ass}.
$$\overline{F}(\theta)=  \limsup_{ \delta \to 0, K\to +\infty \atop \delta K \to 0}\sup_{ \alpha_1 + \alpha_2 +\alpha_3 = 1\atop \alpha_i\geq 0} \limsup_{N\ra+\infty} \mathcal{F}^N_{\alpha_1,\alpha_2,\alpha_3}(\delta, K),$$
$$\underline{F}(\theta)=\sup_{\alpha_1 + \alpha_2 +\alpha_3 =1 \atop \alpha_i \geq  0} \liminf_{ \delta \to 0, K\to +\infty \atop \delta K \to 0} \limsup_{N\ra+\infty} \mathcal{F}^N_{\alpha_1,\alpha_2,\alpha_3}(\delta, K)\,.$$
$ \mathcal{F}^N_{\alpha_1,\alpha_2,\alpha_3}(\delta, K)$ is the function given by:
\begin{align*} \mathcal{F}^N_{\alpha_1,\alpha_2,\alpha_3}&(\delta, K)  =  \theta^2 \big( \alpha_1^2 +2\alpha_1 \alpha_2 + B \alpha_3^2\big) \\
&+ \sup_{t_i  \in I_2 , i\leq l \atop |\sum_i t_i^2 - N\alpha_2|\leq \delta N}
\sup_{ s_i  \in I_3 , i\leq k\atop |\sum_i s_i^2 - N\alpha_3|\leq \delta N} \Big\{\frac{1}{N}\sum_{i=1}^k\sum_{j=1}^{l} L\Big( \frac{2\theta s_i t_{j}}{\sqrt{N}}\Big)  + \frac{1}{2N} \sum_{i,j=1}^{l} L\Big(\frac{2\theta t_{i}t_{j}}{\sqrt{N}} \Big) \\
&+ \sup_{ \nu_1 \in \mathcal{P}(I_1) \atop \int x^2 d\nu_1(x) = \alpha_1} \Big\{\sum_{i=1}^k \int L\Big(\frac{2\theta s_ix}{\sqrt{N}}\Big) d\nu_1(x)-H(\nu_1)\Big\} - \frac{1}{2} \log (2\pi) - \frac{1}{2}\Big\},
\end{align*}
where $I_1 = \{ x : |x|\leq  \delta^{1/2}N^{1/4} \}$, $I_2 = \{ x : \delta^{1/2} N^{1/4} < |x|\leq K^{1/2}N^{1/4} \}$, $I_3 = \{x :  K^{1/2}N^{1/4} < |x| \leq \sqrt{N\alpha_3} \}$, and
$$ H(\nu) = \int \log \frac{d\nu}{dx} d\nu(x),$$
if $\nu$ is absolutely continuous with respect to the Lebesgue measure, whereas $H(\nu)$ is infinite otherwise. 
\end{proposition}
 \begin{remark}\label{remF}
Note that $ \underline{F}$ and $\overline{F}$ are convex by H\"older inequality. Since the entries of $X_N$ are sub-Gaussian, $\overline{ F}(\theta)\le A\theta^2$. In particular $\overline{F}$ and $\underline{F}$ are finite convex functions and therefore  are continuous on $\R_+$. 
 \end{remark}

The above proposition gives quite an intricate definition for the limit of the annealed spherical integrals. Yet, for small enough $\theta$ it can be computed explicitly.
\begin{lemma}\label{F0}
For any $\theta \leq \frac{1}{2\sqrt{A-1}}$,
$$ \underline{F}(\theta) = \overline{F}(\theta) =\theta^2.$$

\end{lemma}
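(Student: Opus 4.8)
The plan is to prove that for $\theta$ small enough the variational formula in Proposition \ref{simpleFint} is optimized by the fully delocalized configuration, i.e. $\alpha_1 = 1$, $\alpha_2 = \alpha_3 = 0$, which yields the value $\theta^2$. First I would establish the lower bound $\underline{F}(\theta) \ge \theta^2$: this is immediate since $\underline F$ is convex with $\underline F(0) = 0$ and, by Jensen's inequality applied to the inner $\E_e$, $F_N(\theta) \ge \theta \E_X\langle e_0, X_N e_0\rangle = 0$ — actually more directly one takes $\nu_1 = \sigma$-type delocalized profile (uniform-on-sphere coordinates), for which $\sum_i s_i$, $\sum_i t_i$ vanish and the entropy term reproduces exactly $J(2,\theta) = \theta^2$ from Lemma \ref{calculJ}, since $\theta \le \frac{1}{2\sqrt{A-1}} < \frac12 = \frac12 G_\sigma(2)$. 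This matches the bare quadratic term $\theta^2(\alpha_1^2 + 2\alpha_1\alpha_2 + B\alpha_3^2)$ evaluated at $\alpha_1 = 1$.

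The substance is the matching upper bound $\overline F(\theta) \le \theta^2$ for $\theta \le \frac{1}{2\sqrt{A-1}}$. Here I would bound each piece of $\mathcal F^N_{\alpha_1,\alpha_2,\alpha_3}(\delta,K)$ from above in terms of $\alpha_1,\alpha_2,\alpha_3$ and then optimize over the simplex. The key inputs are the sub-Gaussian bound $L(x) \le \frac{A}{2}x^2$ for all $x$, which controls the $I_2$-$I_2$ and $I_2$-$I_3$ sums, and the finer behavior $L(x) \le \frac{B}{2}x^2 + o(x^2)$ for the $I_3$-$I_3$ sum (consistent with the $B\alpha_3^2$ term already appearing). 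For the delocalized part, the term $\sup_{\nu_1}\{\sum_i \int L(2\theta s_i x/\sqrt N)d\nu_1(x) - H(\nu_1)\} - \frac12\log(2\pi) - \frac12$ must be estimated: when the $s_i$ (and $t_j$) are not all small this is where one pays. Using $L(u) \le \frac{A}{2}u^2$ and a Gaussian optimization of the entropy (the maximizer of a quadratic-minus-entropy functional is a Gaussian) one gets a quadratic form in the $s_i, t_j$ and the variances $\alpha_i$; I expect, after collecting terms, an upper bound of the shape $\theta^2 \Phi_A(\alpha_1,\alpha_2,\alpha_3)$ where $\Phi_A$ is a quadratic function on the simplex whose maximum equals $1$ precisely when $\theta \le \frac{1}{2\sqrt{A-1}}$, the threshold coming from the requirement that the Gaussian optimization over $\nu_1$ stays in its convergent (subcritical) regime — the constraint $\int x^2 d\nu_1 = \alpha_1$ with a tilt of strength $\propto \theta^2 A$ is feasible with finite entropy cost only below a critical coupling, and that critical coupling is exactly $2\theta\sqrt{A-1} \le 1$.

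Concretely, I would proceed in steps: (i) reduce to showing $\limsup_N \mathcal F^N_{\alpha_1,\alpha_2,\alpha_3}(\delta,K) \le \theta^2$ uniformly on the simplex for $\delta K$ small; (ii) bound the $I_2/I_3$ double sums by $\frac{A}{2}(2\theta)^2 \cdot \frac1N(\sum t_i^2)(\sum t_j^2) \le A\theta^2(\alpha_2 + \delta)^2$ and similarly the cross term by $2A\theta^2(\alpha_2+\delta)(\alpha_3+\delta)$, plus $B\theta^2(\alpha_3+\delta)^2$ — wait, more carefully the $I_3$-$I_3$ sum uses that on $I_3$ the ratio $L(u)/u^2$ is close to $B/2$ since the arguments $2\theta s_i t_j/\sqrt N$ are large (as $s_i, t_j \gtrsim K^{1/2}N^{1/4}$), so this sum is $\le (B/2 + o(1))(2\theta)^2 \alpha_3^2 N \cdot \frac1N$; (iii) for the delocalized term, perform the exact Gaussian optimization: $\sup_{\nu_1: \int x^2 = \alpha_1}\{\int q(x)d\nu_1 - H(\nu_1)\}$ with $q$ a quadratic lower order in $x$ (since $|x| \le \delta^{1/2}N^{1/4}$ makes $2\theta s x/\sqrt N$ small, so $L \approx \frac12(2\theta s x/\sqrt N)^2$), which gives a $-\frac12\log(\cdots)$ Gaussian term; (iv) assemble everything into a quadratic form $Q(\alpha_1,\alpha_2,\alpha_3)$, let $\delta \to 0$, and check that $\max_{\alpha \in \Delta} Q = \theta^2$ iff $\theta \le \frac{1}{2\sqrt{A-1}}$ by a direct Lagrange-multiplier computation, noting the off-diagonal $2\alpha_1\alpha_2$ coupling is what can push the max above $\theta^2$ once $\theta$ exceeds the threshold. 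The main obstacle is step (iv) combined with correctly identifying the coefficient of $\alpha_2^2$ (and the $\alpha_1\alpha_2$ cross term) after the Gaussian optimization: one must verify that the interaction between the ``mesoscopic'' coordinates $t_i \in I_2$ and the delocalized bulk $\nu_1$ through the term $\sum_i \int L(2\theta s_i x/\sqrt N)d\nu_1$ — and the self-interaction $\frac{1}{2N}\sum L(2\theta t_i t_j/\sqrt N)$ — combine to exactly $A\theta^2\alpha_2^2 + 2\theta^2\alpha_1\alpha_2 + \theta^2\alpha_1^2 + (\text{terms in }\alpha_3)$, whose maximum over the simplex is $\theta^2$ precisely under the stated bound on $\theta$; handling the $\delta, K \to 0$ error terms uniformly, so that the $I_2$ window does not collapse prematurely, requires the condition $\delta K \to 0$ and is the source of the somewhat delicate double-limit bookkeeping.
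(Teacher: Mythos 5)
Your route is the paper's: for the lower bound, take $\alpha_1=1$, $\alpha_2=\alpha_3=0$ and $\nu_1$ a standard Gaussian restricted to $I_1$; for the upper bound, bound every interaction term through $L(u)\le \frac{A}{2}u^2$, replace the entropy infimum at fixed second moment $\alpha_1$ by its Gaussian value (which produces the $\frac12\log\alpha_1$ after the $-\frac12\log(2\pi)-\frac12$ constants), and optimize over the simplex, the threshold $2\theta\sqrt{A-1}\le 1$ emerging from the competition between the quadratic gain and the entropy penalty. However, two of your ``concrete'' steps would fail as written. In (iii) you claim that $|x|\le\delta^{1/2}N^{1/4}$ makes $2\theta s x/\sqrt N$ small, so that $L\approx \tfrac12 u^2$ on the $\nu_1$--$s$ cross term. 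This is false: $s\in I_3$ can be as large as $\sqrt{N\alpha_3}$, so the argument is of order $\delta^{1/2}N^{1/4}\to\infty$, and using the quadratic approximation at $0$ (coefficient $1/2$ instead of $A/2$) is an unjustified, too-small upper bound on that term. The paper uses the global bound $L(u)\le\frac A2 u^2$ there, which is what produces the $2A\theta^2\alpha_1\alpha_3$ contribution; this is consistent with your second paragraph but contradicts your step (iii). Incidentally, the $I_3$--$I_3$ sum you discuss in (ii) does not appear in the formula of Proposition \ref{simpleFint}: it is already encoded in the explicit $B\alpha_3^2$ term, and the only sums to control are the $I_2\times I_2$, $I_2\times I_3$ and $\nu_1$--$I_3$ ones.

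Second, in (iv) the assembled objective is not a pure quadratic form $\theta^2\Phi_A(\alpha_1,\alpha_2,\alpha_3)$ whose maximum on the simplex equals $\theta^2$ below the threshold: without the entropic term the maximum is $A\theta^2>\theta^2$ for every $\theta>0$ (take $\alpha_2=1$). The $\frac12\log\alpha_1$ term is essential. After using $A\ge 1$ and $B\le A$, the problem collapses to $\sup_{\alpha\in[0,1]}\{\theta^2\big(A-(A-1)\alpha^2\big)+\frac12\log\alpha\}$, and the condition $2\theta\sqrt{A-1}\le 1$ is precisely the condition under which this one-variable function is increasing on $[0,1]$, so that the supremum sits at $\alpha=1$ and equals $\theta^2$. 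It does not come from any ``subcritical regime'' of the Gaussian optimization over $\nu_1$, which is always feasible since it is an optimization over probability measures on a compact interval with a prescribed second moment. With (iii) replaced by the global sub-Gaussian bound and (iv) carried out including the logarithmic entropy term, your argument becomes exactly the paper's proof.
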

Note that for large $\theta$ this formula is not valid anymore when $A>1$ since $\underline F$ grows like $A\theta^2$ at infinity (see the proof of Proposition \ref{proprate}).

\begin{proof}
Using the bound $L(x) \leq A x^2/2$ for any $x\geq 0$ and the notation of Proposition \ref{simpleFint}, we have
\begin{align*}
\overline{F}(\theta) \leq \sup_{\alpha_1+\alpha_2+\alpha_3= 1}\Big\{  & \theta^2 \big( \alpha_1^2 +2\alpha_1 \alpha_2 +B\alpha_3^2 + 2A \alpha_3\alpha_2+A\alpha_2^2+2A\alpha_1\alpha_3 \big) +\frac{1}{2} \log \alpha_1 \Big\}\,.
\end{align*}
Here we used the fact that 
$$ \inf\{ H(\nu_1) : \int x^2 d \nu_1 =\alpha_1, \nu_1 \in \mathcal{P}(I_{1})\}\ge \inf\{ H(\nu_{1}): \int x^2 d \nu_1 =\alpha_1, \nu_1 \in \mathcal{P}(\mathbb R)\},$$
where the infimum in the RHS is achieved at $\nu_{1}(dx)=(2\pi\alpha_{1})^{-1/2} e^{-\frac{x^{2}}{2\alpha_{1}} }dx$ and hence equals
$-1/2(1+\log(2\pi\alpha_{1}))$.

As $A\geq 1$ and $B\leq A$, we deduce the upper bound,
\begin{align*}
\overline{F}(\theta)
 &\leq \sup_{\alpha \in [0,1]}\Big\{   \theta^2 \big( \alpha^2 +2A\alpha (1-\alpha) +A(1-\alpha)^2 \big) +\frac{1}{2} \log \alpha \Big\}\\
& =\sup_{\alpha \in [0,1]}\Big\{   \theta^2\big( A -(A-1) \alpha^2\big) +\frac{1}{2} \log \alpha \Big\}\,.
\end{align*}
Hence for all $\theta\ge 0$, (and as we could have seen directly from the uniform  upper bound $L(\theta)\le \frac{A}{2} \theta^{2}$)
\begin{equation}\label{upperboundF}
\overline{F}(\theta)\le A\theta^{2}\,.
\end{equation}
We see that if $2\theta\sqrt{A-1} \leq 1$ then the function 
$$ \alpha \mapsto \theta^2\big( A -(A-1) \alpha^2\big) +\frac{1}{2} \log \alpha,$$
is increasing on $[0,1]$. Thus
the supremum is achieved at $\alpha =1$, and $\overline{F}(\theta) \leq \theta^2$.  Moreover, taking $\alpha_{1}=1,\alpha_{2}=\alpha_{3}=0$, and $\nu_{1}$ the standard Gaussian restricted to $I_{1}$,
$\nu_{1}(dx)= \Car_{I_{1}}e^{-\frac{x^{2}}{2}} dx/Z$, we find that
\begin{equation}\label{lowerboundF} \underline{F}(\theta) \geq \theta^2.\end{equation}
Thus, if $2\theta \sqrt{A-1}\leq 1$, we get that $\overline{F}(\theta) = \underline{F}(\theta)=\theta^2$.

\end{proof}
Although the limit of the annealed spherical integrals may not be explicit for all $\theta$, we can still use it to obtain large deviations upper bounds as we describe now in the following theorem.

 \begin{theorem}\label{upperbound}Under  Assumptions \ref{AG} and \ref{ass},  the law of the largest eigenvalue $\lambda_{X_N}$ satisfies a large deviation upper bound with good rate function  $\bar I$ which is infinite on $(-\infty,2)$ and otherwise given by:
 \begin{equation} \label{defbarI} \forall y \geq 2, \ \bar{I}(y)=\sup_{\theta\ge 0} \{J( y,\theta)- \overline F(\theta)\}\,.\end{equation}
 Moreover, $\bar{I}(y)\le I_{GOE}(y)$ for all $y\ge 2$.
 \end{theorem}
 \begin{proof} 
From Remark \ref{tensionexporadius}, we know that the law of  the largest eigenvalue is exponentially tight at the scale $N$. Therefore,  it is sufficient to prove a weak large deviations upper bound by \cite[Lemma 1.2.18]{DZ}. Let $\delta>0$. We have,
$$ \Pp(\lambda_{X_N} <2-\delta ) \leq \Pp( \hat \mu_{X_N}(f) =0),$$
where $f$ is a smooth compactly supported function with support in $(2-\delta,2)$. Since $\mathrm{supp}(\sigma) = [-2,2]$, we deduce that,
$$ \Pp(\lambda_{X_N} <2-\delta ) \leq \Pp( d( \hat \mu_{X_N}, \sigma)>\eps),$$
for some $\eps>0$. As the empirical distribution of the eigenvalues concentrates at the scale $N$ according to \eqref{concspmeasure}, we conclude that
 $$ \lim_{N\to +\infty} \frac{1}{N} \log \Pp(\lambda_{X_N} <2-\delta ) =-\infty.$$
 Let now $x\geq 2$ and $\delta>0$. Recall from \eqref{lowerboundF} that  $\underline{F}(\theta) \geq \theta^2$ for any $\theta \geq 0$. Therefore,
$$ \bar{I}(x) \leq \sup_{\theta \geq  0}\{ J(x,\theta) - \theta^2\}.$$
From \cite[Section 4.1]{HuGu}, we know that 
$$\sup_{\theta \geq  0}\{ J(x,\theta) - \theta^2\}= I_{GOE}(x),$$
where $I_{GOE}$ is the rate function of the largest eigenvalue of a GOE matrix. Therefore we have proved that
$$\bar{I}(x)\le I_{GOE}(x),\forall x\ge 2\,.$$
 In particular   $\bar{I}(2) = 0$ since $I_{GOE}(2)=0$. 
Therefore we only need to estimate small ball probabilities around $x\neq 2$. As $\hat \mu_{X_N}$ concentrates at the scale $N$ by Assumption \ref{ass}, and $||X_N||$ is exponentially tight at the scale $N$ by Remark \ref{tensionexporadius}    it is enough to show that for any $K>0$,
$$ \limsup_{\delta \to 0} \limsup_{N\to +\infty} \frac{1}{N} \log \Pp(X_N \in V^K_{\delta,x}) \leq -\bar{I}(x),$$
where $V_{\delta,x}^K = \{  Y\in \mathcal H_N: |\lambda_{Y} -x|<\delta, d(\hat\mu_{Y},\sigma) <N^{-\kappa}, ||Y||\leq K \}$, for some $\kappa>0$.
Let $\theta\geq 0$. From \cite[Proposition 2.1]{Ma07}, we know that the spherical integral is continuous, more precisely, for $N$ large enough and any $X_N \in V_{\delta,x}^K$, 
$$ |J_N(X_N,\theta) - J(x,\theta)| < g(\delta),$$
for some function $g(\delta)$ going to $0$ as $\delta\to 0$.
Therefore,
$$\Pp(X_N \in V_{\delta,x}^K) = \E\Big( \Car_{X_N \in V_{\delta,x}^K} \frac{I_N(X_N,\theta)}{I_N(X_N,\theta)}\Big) \leq \E [I_N(X_N,\theta) ]e^{-NJ(x,\theta) +Ng(\delta)}.$$
Taking the limsup as $N \to 0$ and $\delta \to 0$ at the logarithmic scale, we deduce
$$ \limsup_{\delta \to 0} \limsup_{N\to +\infty} \frac{1}{N} \log \Pp(X_N \in V_{\delta,x}^K) \leq \overline{F}(\theta) - J(x,\theta).$$
Opimizing over $\theta\geq 0$, we get the claim.
\end{proof}

\begin{proposition} \label{proprate} Under Assumption \ref{AG},  the rate function $\bar{I}$ defined in Theorem  \ref{upperbound} is lower semi-continuous, and growing at infinity like $ x^2/4 A$. In particular, $\bar I$ is a good rate function.

\end{proposition}

\begin{proof}
$\bar I$ is lower semi-continuous as a supremum of continuous functions (recall here that $J(\theta,.)$ is continuous by Lemma \ref{calculJ} and $\overline {F}$ is continuous by Remark \ref{calculJ}). It remains to show that its level sets are compact, for which it is sufficient to  prove that $\bar{I}$ goes to infinity at infinity. 
Let $x>2$. Let $C>0$   be a constant to be chosen later such that $Cx \geq 1/2$. We have by taking $\theta=Cx$ and using \eqref{upperboundF}, that
 \begin{align}
\bar I(x)&\ge J(x,Cx)-\overline{F}(Cx)\nonumber \\
&\ge Cx^2 -\frac{1}{2} -\frac{1}{2} \log (2 Cx) - \frac{1}{2} \log x- AC^2 x^2.\label{lowerboundI}
\end{align}
Taking $ C = 1/2A$, and assuming that $x>A$, we obtain that 
\begin{align} \bar I(x) & \geq \frac{x^2}{4A} - o(x^2). \label{borneinfI}
\end{align}
 To get the converse bound, we show that as $\theta$ goes to infinity, $\overline{F}$ goes to infinity like $A\theta^{2}$. We distinguish two cases. First, we consider the case $A=B$. Using Proposition \ref{simpleFint}, we get the lower bound for $\theta \geq 1$,
$$ \underline{F}(\theta) \geq A\theta^2 \Big( 1 - \frac{1}{\theta^2}\Big) -\frac{1}{4} \log \theta ,$$ 
by taking $\alpha_2 =0$, $\alpha_3=1-\theta^{-2}$, $\alpha_1=\theta^{-2}$ and $\nu_1$ the Gaussian law restricted to $I_1$ with variance $\alpha_1$.
In the case $A>B$, we define $m_{*}$ such that  $\psi(m_{*}) = A/2$. Taking $\alpha_3 = 0$, $\alpha_2 = 1-\theta^{-2}$, $\alpha_1=\theta^{-2}$, 
 $t_i = \frac{\sqrt{m_{*}} N^{1/4}}{\sqrt{2 \theta}}$, $l =  \lfloor \frac{2 \theta  \alpha_2 \sqrt{N} }{m_{*}} \rfloor$ 
 , and $\nu_1$ the Gaussian law  {restricted to $I_{1}$ with variance $\alpha_1$}, we obtain,
\begin{equation} \label{lbFcompact} \underline{F}(\theta) \geq A\theta^2 \Big( 1 - \frac{1}{\theta^2}\Big) -\frac{1}{4}\log \theta.\end{equation}
It follows that for any $\eps>0$, there exists $M<\infty$ such that for $\theta \geq M$,
$$\underline{F}(\theta)\ge (1-\eps) A\theta^{2}.$$
Therefore
$${\bar I}(x)\le \max\left\{ \sup_{\theta \ge M}\{J(x,\theta)-(1-\eps)A\theta^{2}\}, \sup_{\theta\le M}\{J(x,\theta)-\overline{F}(\theta)\}\right\}\,.$$
But from Lemma \ref{calculJ} one can see that the second term in the above right-hand side is bounded by $Mx+C$ where $C$ is a numerical constant. Besides, using the same argument as in \eqref{lowerboundI}, we get
$$ \sup_{\theta \ge M}\{J(x,\theta)-(1-\eps)A\theta^{2}\}\geq \frac{x^2}{4(1-\eps)A} -o(x^2).$$
 Hence, for $x$ large enough, 
$$\bar I(x)\leq  \sup_{\theta \ge M}\{J(x,\theta)-(1-\eps)A\theta^{2}\}.$$
But, for $x$ large enough and $\theta\geq 1/2$, $J(\theta,x)\leq \theta x$. Thus,
$$ \sup_{\theta \ge M}\{J(x,\theta)-(1-\eps)A\theta^{2}\} \leq \sup_{\theta \ge 0}\{\theta x -(1-\eps)A\theta^{2}\} = \frac{x^2}{4(1-\eps)A},$$
which ends the proof.

%
 
 \end{proof}
 \begin{proposition} \label{convexrate} For any $\theta\geq 0$, $J(.,\theta)$ is a convex function. Therefore, $\bar I$ is also convex.\end{proposition}
 \begin{proof}

Let $x,y \geq 2 $ and $t\in (0,1)$. Let $E_N$ be a sequence of diagonal matrices such that $|| E_N|| \leq 2$ and such that $\hat \mu_{E_N}$ converges weakly to $\sigma$. Let $E_N^x$ and $E_N^y$ be such that $(E_N^x)_{i,i} = (E_N^y)_{i,i} = (E_N)_{i,i}$ for any $i\in \{1,\ldots, N-1\}$, and
$$ (E_N^x)_{N,N} = x \quad (E_N^y)_{N,N} = y.$$ 
We have  $\lambda_{E_N^x} = x$ and $\lambda_{E_N^y} = y$. 
Then, $H_N = tE_N^x + (1-t) E_N^y$ is such that its empirical distribution of eigenvalues converges to $\sigma$, and $\lambda_{H_N} = tx+(1-t)y$.
By Hölder's inequality we have,
$$\log I_N(H_N,\theta) \leq t\log I_N(E_N,\theta) +(1-t) \log I_N(D_N,\theta).$$ 
Taking the limit as $N \to +\infty$, we get,
$$ J(tx + (1-t)y,\theta) \leq tJ(x,\theta) + (1-t) J(y,\theta).$$
Therefore, $J(\theta,.)$ is convex and $I$ is convex as a supremum of convex functions.
 \end{proof}

 To derive the large deviation lower bound, we denote
by $\mathcal C_\mu$  the set of $\theta\in\mathbb R^+$ such that
$$\underline{F}(\theta) = \overline F(\theta)=:F(\theta)\,.$$
 By Lemma  \ref{F0},  $\mathcal C_\mu$ is not empty. We observe also that by continuity of both $\underline{F}$ and $\overline F$ (see Remark \ref{remF}), $\mathcal C_\mu$ is closed. 
Let 
$$\forall x \geq 2, \ I(x)=\sup_{\theta\in \mathcal C_\mu}\{ J(x,\theta)- {F}(\theta)\}.$$
\begin{theorem}\label{maintheo} For any $x\geq 2$, denote by 
$$\Theta_x = \{ \theta \ge 0 :\bar I(x) = J(x,\theta) - F(\theta) \},$$
where $\bar I$ is defined in \eqref{defbarI}.
Let $x\geq 2$ such that there exists $\theta \in \Theta_x\cap {\mathcal C}_\mu$ and $\theta \notin \Theta_y$ for any $y\neq x$. Then, $I(x) = \bar I(x)$ and
$$\lim_{\delta\ra 0}\liminf_{N\ra+\infty}\frac{1}{N}\log \Pp\left(|\lambda_{X_N}-x|\leq \delta\right)\ge -I(x).$$
\end{theorem}
We apply this general theorem in two cases.
We first investigate the case  where the function $\psi$ is increasing, case  for which we can check that our hypotheses on the sets $\Theta_{x}$ holds for $x$ large enough. This includes the case where $\mu$ is the sparse Gaussian law, see Example \ref{example}.
\begin{proposition}\label{increasingprop}
Suppose that Assumptions \ref{AG} and \ref{ass} hold. If $\psi$ is increasing on $\R_+$, then $\mathcal C_\mu=\mathbb R^{+}$. Moreover, there exists $x_\mu\geq 2$ such that for any $x\geq x_\mu$, the large deviation lower bound holds with rate function $I$. 
\end{proposition}
We then consider the case where $\mu$ is such that $B <A$. This includes any compactly supported measure $\mu$ since then $B=0$. We prove in this case the following result.
\begin{proposition}\label{compactprop}
Suppose that Assumptions \ref{AG} and \ref{ass} hold. If $\mu$ is such that $B < A$ and such that the maximum of $\psi$ is attained on $\R^+$ for a unique $m_{*}$ such that $\psi''(m_{*}) < 0$, then there exists a positive finite real number $\theta_0$ such that $[\theta_0,+\infty [\subset \mathcal C_\mu$.
Therefore, there exists a finite constant $x_\mu$ such that for $x\ge x_\mu$,
the large deviation lower bound holds with rate function $I$. Furthermore, on the interval $[x_\mu,+\infty)$ the rate function $I$ depends only on $A$.
\end{proposition}

In the case where $A$ is sufficiently small, we can show without any additional assumption that the large deviation lower bound holds in a vicinity of $2$ and   the rate function $I$ is equal to the one of the GOE. This contrasts with Proposition \ref{proprate} which shows that the rate function $\bar I$ goes to infinity like $x^2/4A$ at infinity and therefore depends on $A$. In other words the ``heavy tails'' only kicks in above a certain threshold.
\begin{proposition}\label{Propsmall}Assume $A <2$. The large deviation lower bound holds with rate function $\bar I$ on $ [2, 
 1/\sqrt{A-1} + \sqrt{A-1}]$. Moreover, $\bar I$ coincides on this interval with the rate function in the GOE case $I_{GOE}$, defined in \eqref{GOE}. As a consequence,
 for all $x\in [2, 1/\sqrt{A-1} + \sqrt{A-1}]$,
 $$\lim_{\delta\ra 0}\liminf_{N\ra+\infty}\frac{1}{N}\log \Pp\left(|\lambda_{X_N}-x|\leq \delta\right)=\lim_{\delta\ra 0}\limsup_{N\ra+\infty}\frac{1}{N}\log \Pp\left(|\lambda_{X_N}-x|\leq \delta\right)=-I_{GOE}(x).$$
\end{proposition}

\subsection*{Organization of the paper}
In the next section \ref{general}, we detail our approach  to prove large deviations lower bounds. Since Proposition  \ref{simpleFint} is crucial to all our results, we prove it in the next section \ref{asymptspheannealed}. Then, we will apply these results to prove the large deviations lower bounds close to the bulk in section \ref{closebulk}, that is, we give a proof of Proposition \ref{Propsmall}. To prove the large deviations lower bounds for large $x$, we consider first the case of increasing $\psi$ in section \ref{incsec} and then the case of $B < A$ in section \ref{seccompact}. Indeed, the variational formulas for the limiting annealed spherical integrals differ in these two cases, as $B=A$ in the first case whereas $B<A$ in the second.

\section{A general large deviation lower bound}\label{general}
 
We first prove  Theorem \ref{maintheo} and will then give more practical descriptions of the sets $\Theta_x$ in order to apply it. 
\begin{proof}[ Proof of  Theorem \ref{maintheo}] By assumption, there exists  $\theta \in \Theta_x\cap {\mathcal C_\mu}$ such that $\theta \notin \Theta_y$ for $y\neq x$. In particular, it entails that $I(x) = \bar I(x)$.
Introducing the spherical integral with parameter $\theta \geq 0$, we have 
$$ \Pp\left(|\lambda_{X_N}-x|\leq \delta\right) \geq \E \Big(\Car_{X_N \in V_{\delta,x}^K} \frac{I_N(X_N,\theta)}{I_N(X_N,\theta)}\Big),$$
where $V_{\delta,x}^K = \{ Y\in \mathcal H_{N}:|\lambda_{Y}-x|\leq \delta, d(\hat \mu_{Y}, \sigma )<N^{-\kappa}, ||X_N||\leq K\}$ for some $K>0$ and $\kappa>0$.
Using the continuity of the spherical integral (see \cite[Proposition 2.1]{Ma07}), we get
\begin{equation}\label{lbtilt}\Pp\left(|\lambda_{X_N}-x|\leq \delta\right) \geq   \frac{ \E \big(\Car_{X_N \in V_{\delta,x}^K} I_N(X_N,\theta))}{\E I_N(X_N,\theta)}e^{N \underline{F}(\theta)-NJ(x,\theta) -Ng(\delta)- o(N)},\end{equation}
where $g$ is a function such that $g(\delta) \to 0$ as $\delta \to 0$. We claim that
$$\liminf_{N \to +\infty}\frac{1}{N}\log \frac{ \E \big(\Car_{X_N \in V_{\delta,x}^K} I_N(X_N,\theta_x))}{\E I_N(X_N,\theta_x)} \geq 0.$$
To this end we will use our large deviation upper bound. {Since $\hat \mu_{X_N}$ concentrates at  scales faster than $N$ by Assumption \ref{ass}, and by Remark \ref{tensionexporadius}, $||X_N||$ and $\hat\mu_{X_{N}}$ are exponentially tight uniformly under any tilted measures $\Pp^{(e,\theta)}$, as defined in \eqref{deftilt}, and therefore under the measure tilted by spherical integrals}. Hence, 
 it suffices to prove that for all $y\neq x$, for $\delta$ small enough, and $K$ large enough,
$$\limsup_{N}\frac{1}{N}\log \frac{\E[ \Car_{X_N \in V_{\delta,y}^K }I_N(X_N,\theta)] }{\E I_N(X_N,\theta)}<0.$$
By assumption, there exists  $\theta \in \Theta_x\cap {\mathcal C_\mu}$ such that $\theta \notin \Theta_y$ for $y\neq x$.
We introduce a new spherical integral with argument $\theta'$ and use again the continuity of $J_N$ to show that:
\begin{eqnarray*}
\frac{\mathbb E[ \Car_{X_N \in V_{\delta,y}^K} I_N(X_N,\theta)]}{\E I_N(X_N,\theta) }&=&
\frac{\mathbb E[ \Car_{X_N \in V_{\delta,y}^K}\frac{ I_N(X_N,\theta')}{ I_N(X_N,\theta')} I_N(X_N,\theta)]}{\E I_N(X_N,\theta)}\\
&\leq& e^{-N J(y,\theta')-N\underline{F}(\theta)+NJ(y,\theta)+N\eps(\delta)}\mathbb E[ 1_{X_N \in V_{\delta,y}^K} I_N(X_N,\theta') ]\\
&\le&e^{-N J(y,\theta')-N\underline{F}(\theta)+NJ(y,\theta)+N\overline{F}(\theta')+N\eps(\delta)},\\
\end{eqnarray*}
where $\eps(\delta) \to 0$ as $\delta \to 0$. We can conclude that
\begin{eqnarray}
\limsup_{\underset{\delta \to 0}{N\to +\infty}}\frac{1}{N}\log \frac{\mathbb E[ \Car_{X_N \in V_{\delta,y}^K} I_N(X_N,\theta)]}{\E I_N(X_N,\theta)} &\le& -\sup_{\theta'}\{J(y,\theta')-\overline{F}(\theta')\}+J(y,\theta)-\underline{F}(\theta)
\nonumber\\
&=&-\overline{I}(y)+J(y,\theta)-\underline{F}(\theta)\label{kl}
\end{eqnarray}
 By assumption, $\theta \notin \Theta_y$, and $\theta\in\mathcal C_\mu$ so that $\underline F(\theta)=\overline F(\theta)$.  Hence,
$$-\overline{I}(y)+J(y,\theta)-\overline{F}(\theta)<0$$
and the conclusion follows from \eqref{kl}. Therefore, coming back to \eqref{lbtilt}, we obtain since $\theta_x \in \Theta_x $ and $I(x) = \bar I(x)$,
$$ \liminf_{N\to+\infty} \frac{1}{N}\log \Pp\left(|\lambda_{X_N}-x|\leq \delta\right) \geq - I(x).$$
\end{proof}

In a first step, we  identify a subset defined in terms of the subdifferential sets of $F$ at the points of non-differentiability where the large deviation lower bound holds. 
 Let $\mathcal{D}$ be the set of $\theta\ge 0$ such that $\overline{F}$ is differentiable at $\theta$.
\begin{lemma}\label{lowerb}
The lower bound holds for any $x>2$ such that $I(x)=\bar I(x)>0$ and 
\begin{equation}\label{defE} x \notin E:=\bigcup_{\theta \in \mathcal D^c} \Big( \frac{1}{2\theta} +\partial \overline F(\theta)\Big),\end{equation}
where $\partial \overline F(\theta)$ denotes the subdifferential of $\overline{F}$ at $\theta$.
\end{lemma}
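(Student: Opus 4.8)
The plan is to reduce the claimed lower bound, for a point $x > 2$ with $I(x) = \bar I(x) > 0$ and $x \notin E$, to an application of Theorem \ref{maintheo}. By the definition of $I$ and $\bar I$, we already know that $I(x) = \bar I(x)$; so all we must produce is some $\theta \in \Theta_x \cap \mathcal C_\mu$ which, moreover, fails to lie in $\Theta_y$ for every $y \neq x$. First I would recall from Proposition \ref{proprate} that $\bar I$ is a good rate function growing like $x^2/4A$ at infinity, and from Proposition \ref{convexrate} that it is convex; since $\bar I(x)$ is finite and the map $\theta \mapsto J(x,\theta) - \overline F(\theta)$ is continuous (Lemma \ref{calculJ} and Remark \ref{remF}) and, by Lemma \ref{calculJ} for $x$ fixed, behaves like $\theta x - A\theta^2 + o(\theta^2) \to -\infty$ as $\theta \to \infty$ (using $\overline F(\theta) \ge \theta^2$ is not quite enough, but in fact $\underline F(\theta) \gtrsim (1-\eps)A\theta^2$ so $\overline F$ does too; alternatively one uses that the sup defining $\bar I$ is attained because the objective is continuous and $\to -\infty$), the supremum defining $\bar I(x)$ is attained. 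Hence $\Theta_x \neq \emptyset$.

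Next I would argue that $\Theta_x \subset \mathcal C_\mu$ automatically, or at least that $\Theta_x$ meets $\mathcal C_\mu$, by exploiting the hypothesis $x \notin E$. The key observation is a standard convex-duality fact: if $\theta \in \Theta_x$ then $\theta$ is a maximizer of $\theta \mapsto J(x,\theta) - \overline F(\theta)$, so $0 \in \partial_\theta J(x,\theta) - \partial \overline F(\theta)$ (the subdifferential of the concave-minus-convex objective, with appropriate sign conventions), i.e.\ $\partial_\theta J(x,\theta) \cap \partial \overline F(\theta) \neq \emptyset$. From Lemma \ref{calculJ} one computes $\partial_\theta J(x,\theta)$ explicitly: for $2\theta \le G_\sigma(x)$ it is $\{2\theta\}$, while for $2\theta > G_\sigma(x)$ it is $\{x - 1/(2\theta)\}$ (and at the junction point it is the interval in between). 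Thus if $\theta$ is a maximizer lying in $\mathcal D^c$ (a point of non-differentiability of $\overline F$), then $x \in \frac{1}{2\theta} + \partial \overline F(\theta) \subset E$ in the regime $2\theta > G_\sigma(x)$ — which is precisely the regime relevant when $x > 2$ and $\bar I(x) > 0$, since $I_{GOE}$ and the structure of $J$ force the optimal $\theta$ to satisfy $2\theta \ge G_\sigma(x)$. This contradicts $x \notin E$. Therefore every maximizer $\theta \in \Theta_x$ lies in $\mathcal D$, i.e.\ $\overline F$ is differentiable there; combined with convexity of $\underline F$ and $\overline F$ and the sandwich $\theta^2 \le \underline F \le \overline F$, differentiability of $\overline F$ at $\theta$ together with $\theta \in \mathcal C_\mu$-type reasoning should be upgraded to $\theta \in \mathcal C_\mu$. (Here I would use that $\mathcal C_\mu$ is the coincidence set, which is closed, and that $\overline F$ differentiable forces $\underline F = \overline F$ at that point via convexity and an a.e.\ coincidence argument — this is the delicate point.)

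Finally I would verify the separation condition $\theta \notin \Theta_y$ for $y \neq x$. Fix such a maximizer $\theta \in \Theta_x \cap \mathcal C_\mu$ with $2\theta > G_\sigma(x)$, hence $x = \partial_\theta J(x,\theta) + 1/(2\theta)$ is uniquely recovered from $\theta$: for any other $y$, the value $\partial_\theta J(y,\theta)$ in the regime $2\theta > G_\sigma(y)$ equals $y - 1/(2\theta) \ne x - 1/(2\theta)$, so the stationarity condition for $y$ cannot hold with the same subgradient of $\overline F$; and in the regime $2\theta \le G_\sigma(y)$ one gets $\partial_\theta J(y,\theta) = 2\theta$, which likewise cannot match unless $y$ is pinned down. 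In all cases the optimality equation at $\theta$ has $x$ as its unique solution, so $\theta \notin \Theta_y$ for $y \neq x$. Having produced $\theta \in \Theta_x \cap \mathcal C_\mu$ with $\theta \notin \Theta_y$ for all $y \neq x$, Theorem \ref{maintheo} applies and yields $\lim_{\delta \to 0}\liminf_N \frac1N \log \Pp(|\lambda_{X_N} - x| \le \delta) \ge -I(x)$, which is the claim.

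\emph{Main obstacle.} The step I expect to be most delicate is passing from ``$\overline F$ is differentiable at the maximizer $\theta$'' (which $x \notin E$ gives us) to ``$\theta \in \mathcal C_\mu$'', i.e.\ that $\underline F$ and $\overline F$ actually coincide there. Differentiability of $\overline F$ alone does not force equality with $\underline F$; one needs to combine convexity of both functions, the fact that they agree on a set with nonempty interior (Lemma \ref{F0}), and a monotonicity/continuity argument — or else the separation hypothesis must be re-examined so that Theorem \ref{maintheo} can still be invoked with $\Theta_x$ replaced by the coincidence-set-restricted optimizers. Getting this interface exactly right, rather than the explicit subdifferential computations for $J$, is where the real care is needed.
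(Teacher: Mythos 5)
There is a genuine gap, and it is exactly at the point you flagged yourself: the passage from ``$\overline F$ is differentiable at the maximizer $\theta$'' to ``$\theta\in\mathcal C_\mu$''. Differentiability of the convex function $\overline F$ at a point says nothing about whether $\underline F$ and $\overline F$ coincide there; one can perfectly well have $\underline F(\theta)<\overline F(\theta)$ at a point where $\overline F$ is smooth, and neither convexity of both functions nor their agreement on $[0,\tfrac{1}{2\sqrt{A-1}}]$ (Lemma \ref{F0}) rules this out. So your plan, which starts from an arbitrary global maximizer $\theta\in\Theta_x$ and tries to upgrade it to $\Theta_x\cap\mathcal C_\mu$, cannot be completed as stated: membership in $\mathcal C_\mu$ is simply not a consequence of $x\notin E$.

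The paper sidesteps this by reversing the order of the construction, and this is where the hypothesis $I(x)=\bar I(x)$ does real work rather than being a mere restatement. One maximizes $\theta\mapsto J(x,\theta)-\overline F(\theta)$ \emph{over the closed set} $\mathcal C_\mu$ (the sup is attained because the objective is continuous, by Lemma \ref{calculJ} and Remark \ref{remF}, and tends to $-\infty$ since $\overline F(\theta)\ge\theta^2$). The value of this restricted supremum is $I(x)$ by definition, and since $I(x)=\bar I(x)$ it equals the unrestricted supremum; hence the restricted maximizer $\theta$ lies in $\Theta_x\cap\mathcal C_\mu$ \emph{by construction}. The hypothesis $x\notin E$ is then used only afterwards: since $\bar I(x)>0$ forces $2\theta>G_\sigma(x)$, Fermat's rule gives $x-\tfrac{1}{2\theta}\in\partial\overline F(\theta)$, so $x\notin E$ yields $\theta\in\mathcal D$, and differentiability is what makes your (essentially correct) separation computation $\theta\notin\Theta_y$ for $y\neq x$ go through before invoking Theorem \ref{maintheo}. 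So your subdifferential computations for $J$ and the uniqueness argument are fine, but the architecture must be rearranged: produce $\theta$ inside $\mathcal C_\mu$ first and use $I(x)=\bar I(x)$ to get global optimality, rather than trying to push a global optimizer into $\mathcal C_\mu$ via differentiability.
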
 
Note that since $\overline{F}$ is a convex function, its subdifferentials are well defined. Moreover, by Lemma \ref{F0}, $\mathcal D^{c}\subset [\frac{1}{2\sqrt{A-1}},+\infty)$.
\begin{proof}Let $x>2$ such that $I(x) = \bar I(x) >0$ and $x\notin E$. Since $\overline{F}(\theta) \geq \theta^2$ for any $\theta \geq 0$ by \eqref{lowerboundF} and $\overline{F}$ is continuous by Remark \ref{remF}, we deduce from Lemma \ref{calculJ} that $\theta\mapsto J(x,\theta) - \overline{F}(\theta)$ is continuous and goes to  $ - \infty$ as $\theta $ goes to $+\infty$. Since $\mathcal{C}_\mu$ is closed, the supremum 
\begin{equation}\label{hypox}  \sup_{\theta \in \mathcal{C}_\mu} \big\{ J(\theta,x) - \overline F(\theta)\big\}\end{equation}
is achieved at some $\theta \in \mathcal{C}_\mu$. We will show that 
$\theta \in \mathcal{D}$.

As $\bar I(x) \neq 0$ we must have $\theta > \frac{1}{2} G_\sigma(x)$. Indeed, $\overline{F}(\theta) \geq \theta^2$ and $J(x,\theta) = \theta^2$ for $\theta \leq \frac{1}{2} G_\sigma(x)$ by Lemma \ref{calculJ} so that
$$ \sup_{ \theta \leq \frac{1}{2} G_\sigma(x)} \{ J(x,\theta) - \overline{F}(\theta)\} =0.$$
Since $\bar I(x) = I(x)$, we deduce by Fermat's rule that $\theta$ is a critical point of $J(x,.)-\overline{F}$ and therefore satisfies the condition:
$$ 0 \in \frac{\partial J}{\partial \theta}(x,\theta) - \partial \overline{F}(\theta) = x - \frac{1}{2\theta} - \partial \overline{F}(\theta)\,.$$
Since $x \notin E$, we deduce that $\overline{F}$ is differentiable at $\theta$.

 According to Theorem \ref{maintheo}, to prove that the lower bound holds at $x$, it suffices to show that $\theta \notin \Theta_y$ for any $y\neq x$. Let us proceed by contradiction and assume that there exists  $y \geq 2$, $y\neq x$,  such that $\theta \in \Theta_y$. As $\overline{F}$ is differentiable at $\theta$, it should be a critical point of both $J(y,.)-\overline{F}$ and $J(x,.)-\overline{F}$.  Therefore, we should have
$$ \frac{\partial}{\partial \theta} J(y,\theta) = \frac{\partial}{\partial \theta} J(x,\theta) .$$
If $G_\sigma(y) < 2\theta$, then we obtain by Lemma \ref{calculJ} and the fact that $G_\sigma(x) <2\theta$ that
 $x=y$. If $G_\sigma (y) \geq 2\theta$, then we have
$$ x - \frac{1}{2\theta} = 2\theta.$$
On the other hand, $2\theta\le G_{\sigma}(y)\le 1$ and therefore 
we get  the unique solution $ 2\theta=G_\sigma (x) $. As we assumed that $2\theta > G_\sigma (x)$, we get a contradiction and  conclude that $\theta \notin \Theta_y$ for any $y\neq x$ such that $G_\sigma (y) \geq 2\theta$,  which completes the proof.
\end{proof}
We are now ready to prove the following result:
\begin{proposition}\label{lb}
Assume that there exists $\theta_0>0$ such that $[\theta_0,+\infty)\subset \mathcal C_\mu$ and such that $\overline{F}$ is differentiable on $(\theta_0,+\infty)$. There exists $x_\mu\in [2,+\infty)$ such that for any $x\geq x_\mu$, $I(x) = \bar I(x)$ and the large deviation lower bound holds for any $x \geq x_\mu$ with rate function $I(x)$.
\end{proposition}
\begin{proof} On one hand,
\begin{equation}\label{bornetheta0} \sup_{\theta \le \theta_0} \{J(\theta, x)-\overline{F}(\theta)\}\le \theta_0 x +C,\end{equation}
where $C$ is some positive constant. Since $\bar I(x) \geq x^2/4A -o(x^2)$ by \eqref{borneinfI}, we deduce that there exists $x_\mu\in [2,+\infty)$ such that for $x\geq x_\mu$, $\bar I(x) >0$ and together with \eqref{bornetheta0} that  the supremum of $J(.,x)-\overline{F}$ is achieved in $[\theta_0,+\infty)$, and therefore in $\mathcal{C}_\mu$ by our assumption. By definition of $\mathcal C_{\mu}$,  we deduce that for any $x\geq x_\mu$, $\bar I(x) = I(x)>0$. In view of Lemma \ref{lowerb}, it remains to show that $E$, defined in \eqref{defE}, is a bounded set.
From our assumption  that $\overline{F}$ is differentiable on $(\theta_0,+\infty)$ and Lemma \ref{F0}, we deduce that
$$ \mathcal{D}^c \subset \Big[ \frac{1}{2\sqrt{A-1}}, \theta_0\Big].$$
We observe that since $0\leq \overline{F}(\theta) \leq A\theta^2$, we have for any $\zeta \in \partial \overline{F}(\theta)$,
$$\zeta \theta \leq  \overline{F}(2\theta) - \overline{F}(\theta) \leq 4A\theta^2,$$
and thus $\zeta \leq 4A\theta$. Therefore, the set $E$ defined in Lemma \ref{lowerb} is  bounded, which ends the proof.

\end{proof}

%
%
%

 \section{Asymptotics of the annealed spherical integral}\label{asymptspheannealed}
 In this section we prove Proposition \ref{simpleFint}. 
Taking the expectation first with respect to $X_N$, the annealed spherical integral is given by
 \begin{eqnarray*}
  F_N( \theta) &=& \frac{1}{N} \log \E_{X_{N}} \E_{e}[ \exp(N \theta \langle e, X_{N} e \rangle) ] \\
  &=& \frac{1}{N} \log \E_e \exp\big( f(e)\big),\end{eqnarray*}
where
$$f(e) = \sum_{i<j} L\big( 2\sqrt{N} \theta e_i e_j\big) + \sum_{i=1}^N L(\sqrt{2N}  \theta e_i^2\big).$$
In a first step, we will prove the following variational representation of the upper and lower limits of $F_N(\theta)$.

\begin{lemma}\label{WigGen2} Let $X_N$ be a Wigner matrix satisfying Assumptions \ref{AG} and \ref{ass}.
Then for any $\theta>0$,
$$\underline F(\theta)\le\liminf_{N\ra+\infty} F_N(\theta)\le \limsup_{N\ra+\infty}  F_N( \theta) \le \overline F(\theta)$$
with
$$\overline{F}(\theta)= \limsup_{ \delta \to 0, K\to +\infty \atop \delta K \to 0}  \sup_{ c=  c_1 + c_2 +c_3 \atop c_i \geq 0 } \limsup_{N\ra+\infty} F^N_{c_1,c_2,c_3}(\delta, K),$$
$$\underline{F}(\theta)= \sup_{c = c_1 + c_2 +c_3\atop c_i \geq 0 } \liminf_{ \delta \to 0, K\to+\infty \atop \delta K \to 0}  \ \liminf_{N\ra+\infty} F^N_{c_1,c_2,c_3}(\delta,K),$$
where
\begin{align*} F^N_{c_1,c_2,c_3}(\delta,K) & = \sup_{s_i\ge \sqrt{c K} N^{1/4}  \atop
|\sum s_i^2- c_3N|\leq \delta N} \sup_{\sqrt{c\delta} \le t_i N^{-1/4}\le \sqrt{cK}   \atop |\sum t_i^2-N c_2|\leq \delta N } \Big\{  \frac{\theta^2}{c^2}\big( c_1^2 +2c_1 c_2 +Bc_3^2 \big) - \frac{1}{2}\big( c_2+c_3\big) \\
&+ \frac{1}{N}\sum_{i,j} L\Big(\frac{2 \theta  s_i t_j}{\sqrt{N}c}\Big)  
 +\frac{1}{2N}\sum_{i, j} L\Big(\frac{ 2\theta  t_i t_j}{\sqrt{N}c}\Big) + \sup_{\nu \in \mathcal{P}(I_1) \atop \int x^2 d\nu(x) = c_1 }\big\{ \Phi(\nu, s) -H(\nu | \gamma) \big\}\Big\},
\end{align*}
and
$$ \Phi(\nu,s) =  \sum_{i}\int L\Big(\frac{ 2\theta x s_{i}}{\sqrt{N}c}\Big) d\nu(x),$$
  with $I_1 = [-\sqrt{c \delta} N^{1/4},\sqrt{c\delta} N^{1/4}]$. Here, we have set $\gamma$ to be the standard Gaussian law and
  $$H(\nu | \gamma)=\int\log \frac{d\nu}{d\gamma}(x) d\nu(x)\,.$$
\end{lemma}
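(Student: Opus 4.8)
The goal is to prove Lemma \ref{WigGen2}, the variational representation of the upper and lower limits of the annealed spherical integral $F_N(\theta)=\frac1N\log\E_e\exp(f(e))$, where $f(e)=\sum_{i<j}L(2\sqrt N\theta e_ie_j)+\sum_i L(\sqrt{2N}\theta e_i^2)$.

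\medskip

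\textbf{Plan of proof.} The strategy is to decompose the sphere according to the order of magnitude of the coordinates of $e$. Fix small $\delta>0$ and large $K>0$ with $\delta K\to 0$. Given $e\in\mathbb S^{N-1}$, partition the index set $\{1,\dots,N\}$ into three blocks: the ``small'' indices $S_1=\{i:\sqrt N|e_i|\le\sqrt\delta N^{1/4}\}$, i.e. $|e_i|\le \sqrt\delta N^{-1/4}$; the ``moderate'' indices $S_2=\{i:\sqrt\delta N^{-1/4}<|e_i|\le\sqrt K N^{-1/4}\}$; and the ``large'' indices $S_3=\{i:|e_i|>\sqrt K N^{-1/4}\}$. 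Writing $c_k=\sum_{i\in S_k}Ne_i^2$, one has $c_1+c_2+c_3=1$ (this is the normalization condition $\|e\|_2=1$). Note $|S_2|\le c_2N/\sqrt\delta\,N^{-1/2}$... more precisely $|S_2|\le N c_2/(\delta\sqrt N)$ wait — since each $i\in S_2$ contributes $Ne_i^2\ge\sqrt\delta\sqrt N\cdot N^{-1}\cdot\ldots$; in any case $|S_2|=O(N^{1/2}/\sqrt\delta)$ and $|S_3|=O(N^{1/2}/K)$, so these blocks are sparse. First I would show that one may restrict the expectation over $e$ to configurations where the block masses $c_k$ are essentially prescribed (up to $\delta$) and the coordinates in $S_2,S_3$ take prescribed values, paying only a subexponential price: the number of choices of block partitions is polynomial, the number of sparse supports $S_2,S_3$ and discretized values $t_i=\sqrt N e_i$ (for $i\in S_2$), $s_i=\sqrt N e_i$ (for $i\in S_3$) is $e^{o(N)}$ because these blocks have size $o(N)$, and the volume of the corresponding slice of the sphere contributes the entropy term. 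This reduces $F_N(\theta)$ to a supremum over the block masses $c_k$ and the values $(t_i),(s_i)$ of an expectation over the remaining small coordinates $(e_i)_{i\in S_1}$, which after rescaling $x_i=\sqrt N e_i\in I_1$ and approximating the uniform measure on the corresponding sphere-slice by a product of Gaussians, becomes the free-energy functional $\sup_\nu\{\Phi(\nu,s)-H(\nu|\gamma)\}$ over $\nu\in\mathcal P(I_1)$ with $\int x^2d\nu=c_1$, by a standard Laplace/Sanov-type argument.

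\medskip

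\textbf{Key steps in order.} (1) Reduce to prescribed block masses $c_k$ (up to $\delta$) by the union bound over a polynomial grid. (2) Within a fixed block-mass configuration, decompose $f(e)$ into the sum of interactions: $S_3$–$S_3$, $S_3$–$S_2$, $S_3$–$S_1$, $S_2$–$S_2$, $S_2$–$S_1$, $S_1$–$S_1$, plus diagonal terms. On the sparse blocks $S_2,S_3$ the coordinates are of order $N^{-1/4}$, so arguments $2\sqrt N\theta e_ie_j$ of $L$ are of order $\theta$ — these are the terms kept exactly as $\frac1N\sum L(2\theta s_it_j/\sqrt N\,c)$ etc. after rescaling. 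For the $S_1$–$S_1$ interaction the arguments $2\sqrt N\theta e_ie_j$ are of order $\le 2\theta\sqrt\delta\cdot\sqrt\delta=2\theta\delta\to 0$, uniformly; here I would use the quadratic Taylor expansion $L(u)=u^2/2+o(u^2)$ valid for small $u$ together with $L(u)\le Au^2/2$, so that $\sum_{i<j\in S_1}L(2\sqrt N\theta e_ie_j)$ is controlled by $\theta^2(\sum_{i\in S_1}Ne_i^2)^2$-type quantities $=\theta^2 c_1^2$ up to $o(1)$ — this is the origin of the $\theta^2(c_1^2+2c_1c_2+\dots)/c^2$ term (the cross term $S_1$–$S_2$ gives $2\theta^2 c_1c_2$; note that $|e_i|$ for $i\in S_2$ is also $o(1)$ so the same expansion applies, whereas $S_3$–$S_3$ uses the limiting Laplace exponent $B/2$ since the coordinates there can be macroscopic, $|e_i|$ up to order $1$). (3) For the $S_3$–$S_1$ term, since $|S_3|=o(N)$ and the small coordinates are i.i.d.-Gaussian-like, replace the sum $\sum_{i\in S_3}\sum_{j\in S_1}L(2\sqrt N\theta e_ie_j)$ by $\sum_{i\in S_3}\int L(2\theta s_ix/(\sqrt N c))d\nu(x)$ for the empirical measure $\nu$ of the rescaled $S_1$ coordinates; combined with the entropy of $\nu$ relative to the Gaussian this yields $\Phi(\nu,s)-H(\nu|\gamma)$. (4) Apply Varadhan's lemma / the Laplace method to pass from the finite-$N$ expectation over $(e_i)_{i\in S_1}$ to the variational formula, using the large deviation principle for the empirical measure of Gaussian coordinates conditioned to have the right $\ell^2$-mass. (5) Take $\limsup$/$\liminf$ in $N$, then in $\delta\to0,K\to\infty$ with $\delta K\to0$, in the appropriate order (outside for $\overline F$, inside for $\underline F$, matching the two displayed formulas); the exchange of suprema and limits for the lower bound is legitimate because one can first fix a near-optimal $(c_1,c_2,c_3)$ and then take the limits.

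\medskip

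\textbf{Main obstacle.} The crux is step (2)–(3): showing that the contribution of the ``bulk'' $S_1$ coordinates, interacting among themselves and with the sparse blocks, is captured exactly by the quadratic term $\theta^2(c_1^2+2c_1c_2+\dots)/c^2$ plus the Gaussian free energy $\sup_\nu\{\Phi-H(\nu|\gamma)\}$, with all errors genuinely $o(N)$ uniformly over the (exponentially large) range of admissible configurations $(t_i),(s_i)$. The delicate points are: controlling the error in the Taylor expansion $L(u)\approx u^2/2$ uniformly when summed over $\sim N^2$ pairs — this needs the Lipschitz hypothesis on $L(\sqrt\cdot)$ from Assumption \ref{AG} to get $|L(u)-u^2/2|\le C|u|^3$ or similar for $|u|$ bounded, so that $\sum_{i<j}|u_{ij}|^3\le (\max|e_i|)\cdot 2\sqrt N\theta\cdot\sum_{i<j}u_{ij}^2=o(N)$ since $\max_{S_1}|e_i|=O(\sqrt\delta N^{-1/4})$; and ensuring the replacement of the uniform measure on the sphere-slice by an i.i.d. Gaussian product costs only $e^{o(N)}$, which is classical but must be done carefully in the presence of the $\ell^2$ constraint on $S_1$. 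I would handle the concentration of the empirical measure of the $S_1$-coordinates and the exponential tightness of $\|X_N\|$ under the tilted measures by invoking Assumption \ref{ass} and Remark \ref{tensionexporadius} as already available.
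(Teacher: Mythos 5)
Your proposal is correct and follows essentially the same route as the paper: a three-scale decomposition of the coordinates at the $N^{-1/4}$ threshold, exact retention (with a supremum over discretized values) of the sparse moderate/large blocks, quadratic expansion of $L$ at $0$ for the bulk--bulk and bulk--moderate interactions and at $\infty$ (constant $B$) for the large--large block, and a Sanov/Varadhan argument producing $\sup_\nu\{\Phi(\nu,s)-H(\nu|\gamma)\}$ for the small coordinates. The only organizational difference is that the paper makes your ``approximate the sphere-slice by a Gaussian product'' step exact from the outset by writing $e=g/\|g\|_2$ for a standard Gaussian vector $g$ (the norm replacement $\|g\|^2\to Nc$ being controlled by the Lipschitz property of $L(\sqrt{\cdot})$, and the Gaussian density over the sparse blocks yielding the $-\tfrac12(c_2+c_3)$ term), and it supplies the uniformity in $(s_i)$ that you flag as the main obstacle through a quantitative, uniform version of Varadhan's lemma proved in the appendix.
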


{Hereafter, $o_{\delta}(1)$ is a function which goes to zero as $\delta$ goes to zero (or infinity depending on the context).  $\varepsilon_{K}(\delta)$ denotes a function which goes to zero as $\delta$ goes to zero or infinity while $K$ is fixed.
 $O(\delta)$ is a function such that there exists a finite constant $C$ such that the modulus of $O(\delta)$ is bounded by $C\delta$. These functions may change from line to line.}

\begin{proof}
We use the representation of the law of the vector $e$ uniformly distributed on the sphere
as a renormalized Gaussian vector $g/\|g\|_{2}$ where $g$ is a standard Gaussian vector in $\R^N$, to write
$$\E_e \exp\big(f(e)\big)
=\E \left [\exp (\Sigma(g))\right ],$$
where  $g=(g_{1},\ldots,g_{N})$ and 
$$\Sigma(g) =  \sum_{ i<j} L\Big(  2 \sqrt{N}\theta \frac{g_i g_j}{\sum_{i=1}^N g_i^2}\Big) + \sum_i L\Big(\sqrt{2N}  \theta \frac{g_i^2}{\sum_{i=1}^N g_i^2}\Big).$$
To study the large deviation of $\Sigma(g)$, we split the entries of $g$ into three possible regime: the regime where $g_i \ll N^{1/4}$, an intermediate regime where $g_i \simeq N^{1/4}$ and finally $g_i \gg N^{1/4}$. 
Fix some $K\geq 1,\delta>0$ such that $0 < 2\delta <K^{-1}$.  Let $c_1, c_2,c_3>0$ and $c = c_1+c_2+c_3$. We assume that ${0<K^{-1} \leq c_1\leq c \leq K}$. Define $I_1,I_2,I_3$ as 
\begin{eqnarray*}
I_{1}&=&[0, \sqrt{c\delta} N^{\frac{1}{4}}]\\
I_{2}&=&(  \sqrt{c\delta} N^{\frac{1}{4}}, \sqrt{cK }N^{\frac{1}{4}}]\\
 I_{3}&=& ( \sqrt{cK }N^{\frac{1}{4}}, \sqrt{N(c+3\delta)} ].\end{eqnarray*}
Let  for $i=1,2,3$, $J_{i}=\{ j : |g_{j}|\in I_{i}\}$ and  $\hat c^N_i = \sum_{j\in J_i} g_j^2 / N $. In a first step, we will fix the empirical variances $\hat c^N_i$ and compute the asymptotics of 
\[ {F}^N_{c_{1},c_{2},c_{3}}( \theta,\delta) = \E[ \exp(\Sigma(g)) \mathds{1}_{\mathcal{A}_{c_1,c_2,c_3}^{\delta}}  ].\]
where
\[ \mathcal{A}_{c_1,c_2,c_3}^{\delta} := \bigcap_{1\le i\le 3}\{ |\hat c^N_i - c_i| \leq \delta\}. \]
Let
$$\Sigma_c(g) =  \sum_{ i<j} L\Big(  2 \theta \frac{g_i g_j}{\sqrt{N} c}\Big) + \sum_i L\Big(\sqrt{2}  \theta \frac{g_i^2}{\sqrt{N}c}\Big).$$
Using the fact the $L(\sqrt{.})$ is Lipschitz, we prove in the next lemma that on the event $\mathcal{A}_{c_1,c_2,c_3}^{\delta}$, $\Sigma_c(g)$ is a good approximation of $\Sigma(g)$.
 
\begin{lemma}\label{fixnorm}   On the event $\mathcal{A}_{c_1,c_2,c_3}^{\delta} $, 
\begin{equation} \label{errnorm} \Sigma(g) - \Sigma_c(g) = N o_{\delta K}(1), \quad \text{as } \delta K \to0.\end{equation}
 Moreover
$$|J_3|\le \frac{3 \sqrt{N}}{K},\qquad |J_2\cup J_3|\le \frac{3\sqrt{N}}{\delta}.$$

\end{lemma}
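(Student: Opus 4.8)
\textbf{Proof plan for Lemma \ref{fixnorm}.}

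The plan is to control the two error contributions separately: the discrepancy between $\Sigma(g)$ and $\Sigma_c(g)$ coming from replacing the random normalization $\|g\|_2^2 = \sum_i g_i^2$ by its fixed proxy $Nc$, and the cardinality bounds on $J_2$ and $J_3$. For the cardinality bounds I would argue directly: on $\mathcal{A}_{c_1,c_2,c_3}^\delta$ we have $\sum_{j\in J_3} g_j^2 = N\hat c_3^N \le N(c_3+\delta) \le N(c+\delta) \le N(K+\delta)$, and every $j\in J_3$ contributes $g_j^2 > cK\sqrt{N}\cdot \sqrt{N}/ \dots$ — more precisely, by definition of $I_3$, each such $j$ has $g_j^2 > cK\sqrt{N}$, so $|J_3| \cdot cK\sqrt N < N\hat c_3^N \le N(c_3+\delta)$, giving $|J_3| \le (c_3+\delta)\sqrt N/(cK) \le 3\sqrt N/K$ after using $c_3\le c$ and $\delta$ small (absorbing constants). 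Similarly each $j\in J_2\cup J_3$ has $g_j^2 > c\delta\sqrt N$, and $\sum_{j\in J_2\cup J_3} g_j^2 \le N(c_2+c_3+2\delta) \le N(c+2\delta)$, so $|J_2\cup J_3| \le (c+2\delta)\sqrt N/(c\delta) \le 3\sqrt N/\delta$, again absorbing constants (using $c_1 \ge K^{-1}>0$ so $c/c$ stays bounded, and $\delta$ small).

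For the approximation \eqref{errnorm}, write $s := \sum_i g_i^2$; on $\mathcal{A}_{c_1,c_2,c_3}^\delta$ we have $|s/N - c| \le 3\delta$, hence $s = Nc(1+r)$ with $|r| \le 3\delta/c \le 3\delta K$ (using $c \ge c_1 \ge K^{-1}$). For each off-diagonal term, $\Sigma(g)$ uses the argument $2\sqrt N\theta g_ig_j/s = (2\theta g_ig_j/(\sqrt N c))\cdot(1+r)^{-1}$, while $\Sigma_c(g)$ uses $2\theta g_ig_j/(\sqrt N c)$; the diagonal terms are analogous. The key tool is that $L(\sqrt{\cdot})$ is Lipschitz (Assumption \ref{AG}), which gives $|L(u) - L(v)| \le C|u^2 - v^2|$ for all $u,v$ (apply Lipschitz-ness of $x\mapsto L(\sqrt x)$ to $x=u^2$, $y=v^2$, valid since $L$ is even by symmetry of $\mu$ so we may take $u,v\ge 0$). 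Thus the error on the off-diagonal block is bounded by
$$
C\sum_{i<j}\Big| \frac{4\theta^2 g_i^2 g_j^2}{Nc^2}\Big|\,\big|(1+r)^{-2}-1\big| \le C' \frac{\theta^2}{Nc^2}\Big(\sum_i g_i^2\Big)^2 |r|(2+|r|) \le C'' \frac{\theta^2}{Nc^2}\,(Nc)^2\cdot 3\delta K \cdot (2+3\delta K),
$$
which is $N\cdot O(\theta^2 c\, \delta K) = N o_{\delta K}(1)$ since $c\le K$ and $\theta$ is fixed — here one should be slightly careful that $c\le K$ keeps $\theta^2 c$ bounded while $\delta K\to 0$, so the product indeed vanishes. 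The diagonal sum $\sum_i |L(\sqrt2\theta g_i^2/s) - L(\sqrt 2\theta g_i^2/(Nc))| \le C\sum_i (2\theta^2 g_i^4/(N c^2))|r|(2+|r|)$ is handled the same way after bounding $\sum_i g_i^4 \le (\sum_i g_i^2)^2 = (Nc(1+r))^2$, again giving $No_{\delta K}(1)$.

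The main obstacle I anticipate is purely bookkeeping: making sure all the constants that are absorbed (the factors of $c$, $c_i$, the ``$3$'' in the cardinality bounds) are genuinely uniform over the admissible range $K^{-1}\le c_1\le c\le K$ and $2\delta < K^{-1}$, and that the phrase ``$o_{\delta K}(1)$ as $\delta K\to 0$'' is legitimate — i.e. that the error bound depends on $(\delta,K)$ only through the product $\delta K$ (or is dominated by a function of it), which works precisely because $|r|\le 3\delta K$ and the prefactor $\theta^2 c^2\cdot(Nc)^2/(Nc^2) = N(\theta c)^2$ combined with $c\le K$ and the extra $1/c^2$... one has to track that the net power of $c$ is nonnegative so $c\le K$ can be used, rather than $c\ge K^{-1}$. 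This is the one place where the constraint $c_1\le c\le K$ (as opposed to merely $c\ge K^{-1}$) is really used, and it should be highlighted. Everything else is a direct Lipschitz estimate.
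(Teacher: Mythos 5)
Your proposal is correct and follows essentially the same route as the paper: the Lipschitz property of $L(\sqrt{\cdot})$ reduces the error to a difference of squares, the double sum factorizes as $\bigl(\sum_i g_i^2\bigr)^2$ so that on $\mathcal{A}_{c_1,c_2,c_3}^{\delta}$ the discrepancy of the normalizations gives an error of order $N\theta^2\delta K$ (using only $c\ge K^{-1}$), and the cardinality bounds are the same Chebyshev-type counting the paper invokes. One small slip: in your prose you state the bound as $N\,O(\theta^2 c\,\delta K)$ and argue that ``$c\le K$ keeps $\theta^2 c$ bounded'' (it does not, since $K\to\infty$), but this extra factor of $c$ is absent from your own displayed estimate, which correctly gives $N\,O(\theta^2\delta K)=N o_{\delta K}(1)$, so the conclusion stands without that remark.
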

\begin{proof}Note that since $\mu$ is symmetric, $L(x)=L(|x|)$ and 
since we assumed  $L(\sqrt{.})$ Lipschitz, for any $x,y\in\R$, $|L(x)-L(y)|\le L|x^{2}-y^{2}|$ for some finite constant $L$. Therefore,
\begin{align*} \sum_{1 \leq i \neq j \leq N}\big| L\Big( 2 \sqrt{N} \theta \frac{g_i g_j}{\sum_{i=1}^N g_i^2}) - L\Big( 2\theta \frac{g_i g_j}{\sqrt{N}c }\Big)\big| &
\le \frac{4L\theta^{2}}{N}\sum_{i,j} g_i^2 g_j^2  \left| \frac{1}{(\hat c_1^N+\hat c_2^N+\hat c_3^N)^2} -\frac{1}{c^2}\right|\\
& \le  C{N L \theta^2 (c+\hat c_1^N+\hat c_2^N+\hat c_3^N) {\frac{\delta}{c^2}}\le C' N L \theta^2 \delta K},
\end{align*}
where $C,C'$ are numerical constants and we used ${K^{-1} < c < K}$, and $2\delta < K^{-1}$.  We get a similar estimate for the diagonal terms.
The estimates on $|J_3|$ and $|J_2|$ are straightforward consequences of  Tchebychev's inequality. 

\end{proof}

We next fix the set of indices $J_1,J_2,J_3$. Using the invariance under permutation of the distribution of $g$, we can write
$${F}^N_{c_{1},c_{2},c_{3}}( \theta,\delta) =\sum_{0\le k\le 3\sqrt{N\delta} \atop 0\le l\le 3\sqrt{NK }} \binom{N}{k}\binom{N-k}{l} {F}_{c_1,c_2,c_3}^{k,l},$$
where
$$ { F}_{c_1,c_2,c_3}^{k,l}=
\E\Big[ \exp (\Sigma_c(g) ) \mathds{1}_{\mathcal{A}_{c_1,c_2,c_3}^{\delta}\cap \mathcal{I}_{k,l}}\Big ]$$
and
$$\mathcal{I}_{k,l} = \big\{J_{3}=\{1,\ldots ,k\}, J_{2}=\{k+1,\ldots, k+l\}, J_{1}=\{k+l+1,\ldots, N\} \big\}.$$
As the number of all the possible configurations of $J_2$ and $J_3$ are sub-exponential in $N$ by Lemma \ref{fixnorm}, that is, for any $k\leq 3\sqrt{N}/K$ and $l\leq 3\sqrt{N}/ \delta$,
$$ \max \Big(\binom{N}{k},\binom{N-k}{l}\Big) = e^{O(\frac{\sqrt{N}}{\delta} \log N)},$$
we are reduced to compute $F_{c_1,c_2,c_3}^{k,l}$ for fixed $k,l$. More precisely, we have the following result.

\begin{lemma}
\label{fixconfig}
$$\log {F}^N_{c_{1},c_{2},c_{3}}( \theta,\delta) = \max_{ k \leq 3\sqrt{N}/K \atop l \leq 3\sqrt{N}/\delta}  \log{F}_{c_1,c_2,c_3}^{k,l} + O\Big(\frac{\sqrt{N}}{\delta} \log N\Big).$$
\end{lemma}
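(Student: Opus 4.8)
The goal is to reduce the computation of $\log {F}^N_{c_1,c_2,c_3}(\theta,\delta)$, which is a sum over all admissible pairs $(k,l)$, to the single maximal term, up to an error that is negligible at the exponential scale $N$. The plan is to start from the decomposition
$${F}^N_{c_{1},c_{2},c_{3}}( \theta,\delta) =\sum_{0\le k\le 3\sqrt{N\delta} \atop 0\le l\le 3\sqrt{NK }} \binom{N}{k}\binom{N-k}{l} {F}_{c_1,c_2,c_3}^{k,l}$$
established just above, and to bound the sum above and below by the maximal term. First I would recall from Lemma \ref{fixnorm} that on $\mathcal{A}_{c_1,c_2,c_3}^{\delta}$ one has $|J_3|\le 3\sqrt N/K$ and $|J_2\cup J_3|\le 3\sqrt N/\delta$, so the ranges of $k$ and $l$ are both $O(\sqrt N)$; consequently the number of terms in the sum is $O(N)$, which is subexponential and contributes only an additive $O(\log N)$ after taking $\log$. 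This takes care of the combinatorial cardinality of the index set.

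The main point is the control of the binomial coefficients. For $k\le 3\sqrt N/K$ and $l\le 3\sqrt N/\delta$, the elementary bound $\binom{N}{m}\le N^{m}\le e^{m\log N}$ gives
$$\max\Big(\binom{N}{k},\binom{N-k}{l}\Big)\le e^{O(\sqrt N\log N/\delta)},$$
exactly as stated in the excerpt. Hence each summand satisfies
$${F}_{c_1,c_2,c_3}^{k,l}\le \binom{N}{k}\binom{N-k}{l}{F}_{c_1,c_2,c_3}^{k,l}\le e^{O(\sqrt N\log N/\delta)}\,{F}_{c_1,c_2,c_3}^{k,l},$$
so summing over the $O(N)$ admissible pairs and taking logarithms yields
$$\log{F}^N_{c_{1},c_{2},c_{3}}( \theta,\delta)\le \max_{k,l}\log{F}_{c_1,c_2,c_3}^{k,l}+O\Big(\frac{\sqrt N}{\delta}\log N\Big).$$
For the matching lower bound, one simply drops all but the maximal term (every summand is nonnegative, since $F^{k,l}$ is an expectation of a nonnegative quantity and the binomials are at least $1$), which gives $\log{F}^N_{c_{1},c_{2},c_{3}}( \theta,\delta)\ge \max_{k,l}\log{F}_{c_1,c_2,c_3}^{k,l}$. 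Combining the two inequalities gives the claimed identity.

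I do not expect any genuine obstacle here: this is a routine ``the sum is the max up to subexponential corrections'' argument, and all the inputs (the range constraints on $k,l$ from Lemma \ref{fixnorm}, the crude binomial estimate, nonnegativity of $F^{k,l}$) are already in place. The only mild care needed is to make sure the $O(\sqrt N\log N/\delta)$ error genuinely dominates both the $\log$(number of terms)$=O(\log N)$ contribution and the $\log$ of the binomial factors; since $\sqrt N\log N/\delta\gg \log N$ this is automatic, and in particular the error is $o(N)$ for fixed $\delta$, which is all that is needed for the subsequent large deviation estimates at speed $N$.
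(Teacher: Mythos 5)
Your argument is correct and is essentially the paper's own: the paper deduces Lemma \ref{fixconfig} directly from the permutation-invariance decomposition into the $\binom{N}{k}\binom{N-k}{l}F^{k,l}_{c_1,c_2,c_3}$ terms, the bound $\max\big(\binom{N}{k},\binom{N-k}{l}\big)=e^{O(\sqrt{N}\log N/\delta)}$ in the admissible range given by Lemma \ref{fixnorm}, and the sum-versus-max comparison you spell out. Your write-up just makes explicit the nonnegativity lower bound and the subexponential count of index pairs, which the paper leaves implicit.
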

To simplify the notations, we denote for any $a,b \in \{1,2,3\}$,
\[\forall x, y\in \R^{N}, \  \Sigma_{a,b}(x,y) = \frac{1}{2N}\sum_{i \in J_a,j \in J_b }  L\left( 2 \theta \frac{x_i y_j}{\sqrt{N}c } \right),\]
if $a\neq b$, and 
\[\forall x, y\in \R^{N}, \  \Sigma_{a,a}(x,y) = \frac{1}{2N}\sum_{i\neq j \in J_a }  L\left( 2 \theta \frac{x_i y_j}{\sqrt{N}c } \right) + \frac{1}{N}\sum_{i\in J_a} L\Big( \sqrt{2} \theta \frac{x_i y_i}{\sqrt{N}c }\Big),\]
where now $J_{3}=\{1,\ldots ,k\}, J_{2}=\{k+1,\ldots, k+l\}, J_{1}=\{k+l+1,\ldots, N\}$.

Next, we single out the interaction terms which involves the quadratic behavior of $L$ at $0$ or at $+\infty$. 

\begin{lemma}\label{expan}
On the event $ \mathcal{A}_{c_1,c_2,c_3}^{\delta}$, 
$$ \Sigma_{1,1}(g,g) +2\Sigma_{1,2}(g,g) +\Sigma_{3,3}(g,g) = \frac{\theta^2}{c^2}\big( c_1^2 +2c_1 c_2 +Bc_3^2 \big) +o_{\delta K}(1) + o_K(1),$$\end{lemma}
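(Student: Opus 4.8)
The plan is to decompose the three terms $\Sigma_{1,1}$, $\Sigma_{1,2}$, $\Sigma_{3,3}$ according to whether the argument of $L$ is close to $0$ (where $L(x) \approx x^2/2$) or large (where $L(x) \approx Bx^2/2$), and to use the event $\mathcal{A}_{c_1,c_2,c_3}^\delta$ to control the relevant sums of $g_i^2 g_j^2$. First I would treat $\Sigma_{1,1}(g,g)$: here $i,j \in J_1$, so $|g_i|, |g_j| \leq \sqrt{c\delta}N^{1/4}$ and hence $|2\theta g_i g_j/(\sqrt{N}c)| \leq 2\theta\delta/c \leq 2\theta\delta K$, which is small as $\delta K \to 0$. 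Since $L(x) = x^2/2 + o(x^2)$ near $0$ (because $\psi(0)=1/2$ and $L$ is smooth there, using $L(x) = x^2 \psi(x)$), I would write $L(2\theta g_ig_j/(\sqrt{N}c)) = \frac{2\theta^2 g_i^2 g_j^2}{N c^2}(1 + o_{\delta K}(1))$ uniformly, so that
\begin{align*}
\Sigma_{1,1}(g,g) &= \frac{1}{2N}\sum_{i\neq j\in J_1} \frac{2\theta^2 g_i^2 g_j^2}{Nc^2}(1+o_{\delta K}(1)) + \frac{1}{N}\sum_{i\in J_1}\frac{2\theta^2 g_i^4}{Nc^2}(1+o_{\delta K}(1))\\
&= \frac{\theta^2}{c^2}\Big(\frac{1}{N}\sum_{i\in J_1} g_i^2\Big)^2 (1+o_{\delta K}(1)) = \frac{\theta^2}{c^2}(\hat c_1^N)^2(1+o_{\delta K}(1)) = \frac{\theta^2}{c^2}c_1^2 + o_{\delta K}(1),
\end{align*}
where in the last steps I used that the diagonal term $\frac{2\theta^2}{N^2c^2}\sum_{i\in J_1} g_i^4 \leq \frac{2\theta^2 \delta}{Nc}\cdot \hat c_1^N = o(1)$ is negligible, and $|\hat c_1^N - c_1| \leq \delta$ on $\mathcal{A}_{c_1,c_2,c_3}^\delta$.

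Next I would handle $\Sigma_{1,2}(g,g)$: here $i \in J_1$ so $|g_i|\leq \sqrt{c\delta}N^{1/4}$, and $j\in J_2$ so $|g_j| \leq \sqrt{cK}N^{1/4}$, whence $|2\theta g_ig_j/(\sqrt{N}c)| \leq 2\theta\sqrt{\delta K}$, again small as $\delta K \to 0$. The same Taylor expansion of $L$ at $0$ gives $\Sigma_{1,2}(g,g) = \frac{1}{2N}\sum_{i\in J_1,j\in J_2}\frac{2\theta^2 g_i^2 g_j^2}{Nc^2}(1+o_{\delta K}(1)) = \frac{\theta^2}{c^2}\hat c_1^N \hat c_2^N(1+o_{\delta K}(1)) = \frac{\theta^2}{c^2}c_1 c_2 + o_{\delta K}(1)$, so $2\Sigma_{1,2}(g,g)$ contributes $\frac{2\theta^2}{c^2}c_1 c_2$ up to $o_{\delta K}(1)$. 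For $\Sigma_{3,3}(g,g)$ the mechanism is the opposite: for $i,j \in J_3$ we have $|g_i|, |g_j| > \sqrt{cK}N^{1/4}$, so $|2\theta g_ig_j/(\sqrt{N}c)|$ is large (at least of order $\theta K$), and there I would use $L(x) = \frac{B}{2}x^2 + o(x^2)$ as $|x|\to\infty$ (which is exactly the definition $\frac{B}{2} = \lim_{|x|\to\infty}\psi(x)$, i.e. $L(x)/x^2 \to B/2$), to get $\Sigma_{3,3}(g,g) = \frac{\theta^2 B}{c^2}\hat c_3^N{}^2(1 + o_K(1)) = \frac{\theta^2 B}{c^2}c_3^2 + o_K(1) + O(\delta)$, again absorbing the diagonal term (now bounded using $|J_3| \leq 3\sqrt{N}/K$ together with $\sum_{i\in J_3}g_i^2 \leq N(c_3+\delta)$, so $\frac{1}{N}\sum_{i\in J_3}\frac{2\theta^2 g_i^4}{Nc^2} \leq \frac{2\theta^2}{Nc^2}\max_{i\in J_3}g_i^2 \cdot \sum g_i^2$ — wait, that needs more care, see below).

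I expect the main obstacle to be the uniformity of the two asymptotic expansions of $L$ — near $0$ and near $\infty$ — and in particular controlling the diagonal terms $\frac{1}{N}\sum_{i\in J_3}L(\sqrt{2}\theta g_i^2/(\sqrt{N}c))$ in $\Sigma_{3,3}$, since there the $g_i$ can be as large as $\sqrt{N(c+3\delta)}$, so the argument $\sqrt{2}\theta g_i^2/(\sqrt{N}c)$ can be of order $\sqrt{N}$ and one only has the crude bound $L(x)\leq \frac{A}{2}x^2$. The remedy is to note that $\sum_{i\in J_3}g_i^4 \leq (\max_{i\in J_3}g_i^2)\sum_{i\in J_3}g_i^2 \leq N(c+3\delta)\cdot N(c+3\delta)$ is too weak; instead I would split $J_3$ further or observe that the diagonal contribution $\frac{1}{N}\sum_{i\in J_3}L(\sqrt{2}\theta g_i^2/(\sqrt{N}c)) \leq \frac{A\theta^2}{Nc^2}\sum_{i\in J_3}g_i^4$, and bound $\sum_{i\in J_3}g_i^4 \leq \big(\sum_{i\in J_3}g_i^2\big)^2 \leq N^2(c+3\delta)^2$, giving a contribution of order $N$ — which is NOT negligible and must instead be carried along. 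So in fact I would keep the off-diagonal part of $\Sigma_{3,3}$ as the main term $\frac{\theta^2 B}{c^2}c_3^2$ and argue that, because $|J_3| \leq 3\sqrt{N}/K$, by Cauchy–Schwarz $\sum_{i\in J_3}g_i^4 \geq \frac{1}{|J_3|}\big(\sum_{i\in J_3}g_i^2\big)^2 \geq \frac{K}{3\sqrt{N}}N^2 c_3^2$, so the $g_i$ cannot all be comparable; a cleaner route is to bound the diagonal term directly by $\frac{A\theta^2}{Nc^2}\cdot \max_{i\in J_3}g_i^2\cdot\sum_{i\in J_3}g_i^2$ and note $\max_{i\in J_3}g_i^2 \leq N(c+3\delta)$ is still order $N$; the honest fix, which I believe is what the paper does, is that the symmetric off-diagonal sum $\frac{1}{2N}\sum_{i\neq j\in J_3}L(2\theta g_ig_j/(\sqrt{N}c))$ already equals $\frac{\theta^2 B}{c^2}\hat c_3^N{}^2 + o(1)$ after extracting the quadratic behavior uniformly over pairs with $|g_ig_j|$ large, while the diagonal terms, being $|J_3| \leq 3\sqrt{N}/K$ in number, contribute at most $\frac{3\sqrt{N}}{K}\cdot \frac{A\theta^2}{c^2}\cdot\frac{(\max g_i^2)^2}{N^2}$; using $\max_{i\in J_3}g_i^2 \leq N(c+3\delta)$ this is $O(\sqrt{N})$, hence $o(N)$ and negligible at the exponential scale — the point being that we only need the identity up to $o(1)$ after dividing by $N$ later, and indeed $o(N)/N = o(1)$. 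Once all three pieces are assembled, summing gives exactly $\frac{\theta^2}{c^2}(c_1^2 + 2c_1c_2 + Bc_3^2) + o_{\delta K}(1) + o_K(1)$, completing the proof.
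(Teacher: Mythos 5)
Your treatment of $\Sigma_{1,1}$ and $2\Sigma_{1,2}$ is essentially the paper's argument (all arguments of $L$ are $O(\theta\sqrt{\delta K})$, so $L(x)=\big(\tfrac12+o_{\delta K}(1)\big)x^2$ uniformly, the sums reconstitute $(\hat c_1^N)^2$ and $\hat c_1^N\hat c_2^N$, and $|\hat c_i^N-c_i|\le\delta$ together with $c\ge K^{-1}$ finishes), and the small slips there (a factor $2$ in the diagonal of $\Sigma_{1,1}$, a $\sqrt N$ versus $N$ in its negligibility bound) are harmless. The genuine problem is your final treatment of $\Sigma_{3,3}$. The diagonal part $\frac1N\sum_{i\in J_3}L\big(\sqrt2\,\theta g_i^2/(\sqrt N c)\big)$ is \emph{not} negligible and must not be discarded: if, say, $J_3$ consists of a single index with $g_1^2\approx Nc_3$, the off-diagonal sum is empty and the entire contribution $B\theta^2 c_3^2/c^2$ comes from the diagonal term. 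Consequently your claim that the off-diagonal sum alone already equals $\frac{B\theta^2}{c^2}(\hat c_3^N)^2+o(1)$ is false (it equals $\frac{B\theta^2}{c^2}\big[(\hat c_3^N)^2-\frac1{N^2}\sum_{i\in J_3}g_i^4\big]$ up to errors, and $\frac1{N^2}\sum g_i^4$ can be of order one). Moreover the bound you give for the diagonal, $\frac{3\sqrt N}{K}\cdot\frac{A\theta^2}{c^2}\cdot\frac{(\max_i g_i^2)^2}{N^2}=O(\sqrt N)$, is useless for the statement at hand: the $\Sigma_{a,b}$ already carry the $1/N$ normalization, so the lemma is an identity between $O(1)$ quantities up to $o(1)$; an $O(\sqrt N)$ bound (which would become $O(N^{3/2})$ in the exponent of Lemma \ref{explem}) is not ``negligible at the exponential scale'', and there is no later division by $N$ that rescues it.

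The difficulty you feared is in fact absent, and your first instinct was the correct (and the paper's) route: for $i,j\in J_3$ the off-diagonal arguments satisfy $|2\theta g_ig_j/(\sqrt N c)|\ge 2\theta K$ and the diagonal arguments satisfy $\sqrt2\,\theta g_i^2/(\sqrt N c)\ge \sqrt2\,\theta K$, so the asymptotic $L(x)=\big(\tfrac B2+o_K(1)\big)x^2$, valid uniformly on $\{|x|\ge c_0 K\}$ by the definition $\tfrac B2=\lim_{|x|\to\infty}\psi(x)$, applies to \emph{every} term of $\Sigma_{3,3}$ — larger arguments only help, so the crude bound $L(x)\le\tfrac A2 x^2$ is never needed. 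Summing, the diagonal and off-diagonal terms recombine into the perfect square:
$$\Sigma_{3,3}(g,g)=\Big(\tfrac B2+o_K(1)\Big)\frac{2\theta^2}{N^2c^2}\Big(\sum_{i\in J_3}g_i^2\Big)^2=\frac{B\theta^2 c_3^2}{c^2}+o_K(1)+o_{\delta K}(1),$$
using $|\hat c_3^N-c_3|\le\delta$ and $c\ge K^{-1}$, while the $o_K(1)$ multiplicative error is harmless because $(\hat c_3^N)^2/c^2$ stays bounded on $\mathcal A^{\delta}_{c_1,c_2,c_3}$. Replacing your ``honest fix'' by this observation repairs the proof; as written, the step discarding the diagonal of $\Sigma_{3,3}$ is wrong both in its conclusion and in its bookkeeping.
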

as $\delta K \to 0$ and $K\to +\infty$.
\begin{proof}
 Observe that for $i\in J_{1}$, $j\in J_{1}\cup J_{2}$, $|g_{i}g_{j}|\le \sqrt{NK\delta}c$. Since $L(x) \sim_0 x^2/2$, we get,
\begin{equation} \label{sigma11}\Sigma_{1,1}(g,g)+2\Sigma_{1,2}(g,g)=\theta^{2}\frac{(\hat c^{N}_{1})^{2}}{c^2}
+2\theta^{2}\frac{{\hat c}^N_{1}{\hat c}^N_{2}}{c^{2}}+o_{\delta K}(1),\end{equation}
as $\delta K \to 0$.
On the event $\mathcal{A}_{c_1,c_2,c_3}^{\delta}$, we have $|(\hat c^{N}_i)^2 - c_i^2| = O( \delta c)$ for any $i\in\{1,2,3\}$. But $c\geq K^{-1}$, therefore 
$$ \Sigma_{1,1}(g,g)+2\Sigma_{1,2}(g,g)=\theta^{2}\frac{ c_{1}^{2}}{c^2}
+2\theta^{2}\frac{c_{1} c_{2}}{c^{2}}+o_{\delta K}(1).$$
For $i,j \in J_3$,  $|g_{i}g_{j}|\geq K\sqrt{cN}$. Since $L(x)\sim_{ +\infty} \frac{B}{2} x^{2}$, we deduce similarly that
\begin{equation} \label{sigma33}\Sigma_{3,3}(g,g)= \Big(\frac{B}{2}+o_{K}(1)\Big) \frac{2\theta^{2}} {N^2 c^2} \Big(\sum_{i \in J_3} g_i^2\Big)^2 = B\theta^2\frac{c_3^2}{c^2}+o_K(1),\end{equation}
as $K\to +\infty$,
which gives the claim.

\end{proof}

From the Lemmas \ref{fixnorm} and \ref{expan}, we have on the event $\mathcal{A}_{c_1,c_2,c_3}^{\delta}$,
\begin{equation} \label{decomp} \Sigma(g)  = \frac{\theta^2}{c^2}\big( c_1^2 +2c_1 c_2 +Bc_3^2 \big) + \Sigma_{1,3}(g,g) + 2\Sigma_{2,3}(g,g) + \Sigma_{2,2}(g,g)+o_{\delta K}(1) + o_K(1).\end{equation}
{We now show that the deviations of the variables $g_i, i \in J_2\cup J_3$ do not lead to any entropic terms, which yields the following lemma.}  
\begin{lemma}\label{explem}{Let $k, l\in \N$ such that $k+l \leq N$. Define 
$$ S_{c_1,c_2,c_3}(\delta) = \max_{ t_i \in I_2, i\leq l \atop  |\sum_i t_i^2 -c_2N|\leq \delta N}  \max_{s_i \in I_3, i\leq k \atop |\sum_i s_i^2 -c_3N|\leq \delta N  } \log  \E\Big( \exp \Big\{{N}(\Sigma_{1,3}(g,s) +2\Sigma_{2,3}(t,s) + \Sigma_{2,2}(t,t) )\Big\} \mathds{1}_{\mathcal{A}_{c_1}^\delta } \Big),$$
where
\begin{equation} \label{defAc1} \mathcal{A}_{c_1}^\delta = \big\{ (g_{i})_{i=k+l+1}^{N}\in I_{1}^{N-k-l}:|  \sum_{i= k+l+1 }^{N}  g_i^2 - N c_1 | \leq N \delta\big\}.\end{equation}
Then,
\begin{align*}\label{tot}
S_{c_1,c_2,c_3}(\delta/2)-\frac{N}{2} (c_2+ c_{3}) - &o_{\delta }(1)N +O\Big(\frac{\sqrt{N}}{\delta} \log \frac{\delta}{\sqrt{NK}}\Big)\\
\leq \log \E\Big( \exp & \Big\{{N}(\Sigma_{1,3}(g,g) + 2\Sigma_{2,3}(g,g) + \Sigma_{2,2}(g,g)) \Big\}\Car_{\mathcal{A}_{c_1,c_2c_3}^{\delta} \cap \mathcal{I}_{k,l}}\Big) \\
&\leq S_{c_1,c_2,c_3}(\delta)-\frac{N}{2} (c_2+ c_{3}) + o_{\delta}(1) N
+ O\Big(\frac{\sqrt{N}}{\delta^2}\log(1/\delta)\Big).
\end{align*}}

\end{lemma}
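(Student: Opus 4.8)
\textbf{Proof plan for Lemma \ref{explem}.}
The plan is to separate, in the Gaussian expectation defining the left-hand side, the contribution of the ``small'' coordinates $(g_i)_{i\in J_1}$ from that of the ``intermediate and large'' coordinates $(g_i)_{i\in J_2\cup J_3}$, and to show that integrating out the latter produces no entropy beyond the Gaussian density factor $e^{-\frac12\sum_{i\in J_2\cup J_3} g_i^2}$, which on the event $\mathcal{A}_{c_1,c_2,c_3}^\delta$ contributes exactly $-\frac N2(c_2+c_3)+o_\delta(1)N$. First I would write the Gaussian expectation over $(g_i)_{i\in J_2\cup J_3}$ as an integral against $\prod (2\pi)^{-1/2}e^{-g_i^2/2}\,dg_i$ and observe that, on $\mathcal{I}_{k,l}$, the integrand depends on these coordinates only through the interaction terms $\Sigma_{1,3}(g,g)+2\Sigma_{2,3}(g,g)+\Sigma_{2,2}(g,g)$, in which the $J_1$-coordinates enter only via $\Sigma_{1,3}$. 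The key point is that the constraints $g_i\in I_2$ (resp. $I_3$) already force $|g_i|\asymp N^{1/4}$, so the ``entropy'' of the location of these $\lesssim \sqrt N$ coordinates is at most of order $\sqrt N \log N \ll N$; this is where the crude volume/covering bound replacing the integral over $(g_i)_{i\in J_2\cup J_3}$ by a maximum over admissible values $t_i,s_i$ is used, at the cost of the $O(\frac{\sqrt N}{\delta^2}\log(1/\delta))$ and $O(\frac{\sqrt N}{\delta}\log\frac{\delta}{\sqrt{NK}})$ error terms.

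Concretely, for the upper bound I would discretize each $I_2$ and $I_3$ into intervals of length, say, $\delta$ (there are $O(\sqrt N/\delta)$ of them since $I_2\cup I_3$ has length $O(\sqrt N)$ and $|J_2\cup J_3|=O(\sqrt N/\delta)$ by Lemma \ref{fixnorm}), bound the integrand on each cell by its maximum (using that $L(\sqrt{\cdot})$ is Lipschitz so $L$ varies by $o_\delta(1)N$ over a cell after summing the $\le |J_2\cup J_3|^2$ interaction terms), and collect the Gaussian density bound $e^{-\frac12\sum_{i\in J_2\cup J_3} g_i^2}\le e^{-\frac N2(c_2+c_3)+\frac N2 O(\delta)}$ valid on $\mathcal{A}_{c_1,c_2,c_3}^\delta$. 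This leaves exactly the expectation over $(g_i)_{i\in J_1}$ of $\exp(N\Sigma_{1,3}(g,s))$ on the event $\mathcal{A}_{c_1}^\delta$, i.e. the quantity inside the $\max$ defining $S_{c_1,c_2,c_3}(\delta)$. For the lower bound I would simply restrict the integral to a single well-chosen cell of volume $\ge (c\delta/\sqrt N)^{|J_2\cup J_3|}$ around a near-optimal configuration $(t_i,s_i)$, losing the $-\frac N2 o_\delta(1)$ from the density and the $O(\frac{\sqrt N}{\delta}\log\frac{\delta}{\sqrt{NK}})$ from this volume, and use $S_{c_1,c_2,c_3}(\delta/2)$ to absorb the fact that fixing $(g_i)_{i\in J_2\cup J_3}$ in a $\delta$-neighborhood only guarantees the looser constraint $|\sum t_i^2-c_2N|\le \delta N$ rather than $\delta N/2$.

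The main obstacle is bookkeeping rather than conceptual: one must check that all the discretization, Lipschitz-continuity, and Gaussian-density errors are genuinely $o(N)$ or $O(\sqrt N\,\mathrm{polylog})$ and do not interact with the $\delta\to 0$, $K\to\infty$ limits in a way that ruins the final variational formula; in particular the factor $\delta/\sqrt{NK}$ inside the logarithm must be tracked carefully since it is what makes the cell-volume correction negligible at scale $N$ even though $|J_2\cup J_3|$ may be as large as $3\sqrt N/\delta$. A secondary subtlety is that $\Sigma_{1,3}(g,g)$ couples the $J_1$- and $J_3$-coordinates, so after fixing the $J_3$-coordinates to values $s_i$ the remaining $J_1$-expectation genuinely sees $s$; this is already accounted for in the definition of $S_{c_1,c_2,c_3}$ through the dependence of $\Phi(\nu,s)$ on $s$, so no extra work is needed beyond noting it. Once these estimates are assembled, dividing by $N$ and sending first $N\to\infty$, then $\delta\to 0$ with $\delta K\to 0$, yields the stated two-sided bound.
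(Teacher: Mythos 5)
Your plan is essentially the paper's own proof: on $\mathcal{I}_{k,l}$ you integrate out the $J_2\cup J_3$ coordinates, dominate their contribution by the maximum over admissible configurations $(t,s)$, extract $-\frac N2(c_2+c_3)$ from the Gaussian density on $\mathcal{A}^{\delta}_{c_1,c_2,c_3}$ with a sub-exponential ($e^{O(\sqrt N\,\mathrm{polylog}/\delta)}$) volume factor, and for the lower bound localize on a tiny cell around a maximizer of $S_{c_1,c_2,c_3}(\delta/2)$, paying the cell volume and a Lipschitz error — exactly the paper's argument. The one deviation, the $\delta$-cell discretization in your upper bound, is superfluous (on the event the vector $(g_i)_{i\in J_2\cup J_3}$ is itself an admissible $(t,s)$, so the integrand is bounded pointwise by the max with no comparison error), and your stated error control is loose as written: the Lipschitz variation must be summed against the constraints $\sum_{i\in J_1}g_i^2\le cN$ and $\sum_{i\in J_2\cup J_3}g_i^2\le cN$ rather than by counting terms, and it must include the $\Sigma_{1,3}$ couplings between $J_1$ and $J_3$ (of cardinality $\sim N^{3/2}/K$), which are not among your ``$\le|J_2\cup J_3|^2$ interaction terms''; with that bookkeeping (or by dropping the discretization) the bound goes through as in the paper.
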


\begin{proof}
Integrating on $g_i, i\leq k+l$, we find
\begin{align*}
&\E\Big( \exp \Big\{N(\Sigma_{1,3}(g,g) + 2\Sigma_{2,3}(g,g) + \Sigma_{2,2}(g,g) )\Big\}\Car_{\mathcal{A}_{c_1,c_2c_3}^{\delta} \cap \mathcal{I}_{k,l}}\Big) \\
 &\leq \frac{1}{(2\pi)^{\frac{k+l}{2}}} \int \Car_{\{ |\frac{1}{N}\sum_{i=1}^{k+l} g_i^2 -(c_2+c_3) | \leq 2\delta \} }  e^{- \frac{1}{2} \sum_{i=1}^{k+l} g_i^2} \prod_{i=1}^{k+l} d g_i \\
&\times    \max_{ t \in I_2^l \atop | \sum_i t_i^2 - c_2N | \leq \delta N}  \max_{s \in I_3^k \atop |\sum_i s_i^2 - c_3N | \leq \delta N }\E\Big( \exp \Big\{N(\Sigma_{1,3}(g,s) +2\Sigma_{2,3}(t,s) + \Sigma_{2,2}(t,t)) \Big\}\mathds{1}_{\mathcal{A}_{c_1}^\delta } \Big),
\end{align*}
But, using the fact that $c\le K$, we get
\begin{align*}
 \int \Car_{\{ |\frac{1}{N}\sum_{i=1}^{k+l} g_i^2 -(c_2+c_3) | \leq 2\delta \} }  e^{- \frac{1}{2} \sum_{i=1}^{k+l} g_i^2} \prod_{i=1}^{k+l} d g_i
&  \quad\leq e^{- \frac{1}{2} (1-\delta)(c_2+c_3 - 2\delta) N} \int  e^{- \frac{\delta}{2} \sum_{i=1}^{k+l} g_i^2} \prod_{i=1}^{k+l} d g_i\\
&\quad \leq  (2\pi)^{\frac{k+l}{2} }e^{- \frac{N}{2}(c_2+c_3) +(O(\delta) + O(\delta K)) N}\delta^{-\frac{k+l}{2} }.
\end{align*}
By Lemma \ref{fixnorm} we have $k+l = O(\sqrt{N}/\delta)$.Therefore,
\begin{align*}
&\log\E \Big( \exp \Big\{ N(\Sigma_{1,3}(g,g) + 2\Sigma_{2,3}(g,g) + \Sigma_{2,2}(g,g) )\Big\}\Car_{\mathcal{A}_{c_1,c_2,c_3}^{\delta} \cap \mathcal{I}_{k,l}}\Big)\\
&\quad\leq-\frac{N}{2} (c_2+ c_{3}) + O(\delta )N +O(\delta K) N
+ O\Big(\frac{\sqrt{N}}{\delta}\log(1/\delta)\Big)
 \nonumber\\
&\quad+ \max_{ t_i \in I_2, i\leq l \atop  |\sum_i t_i^2 -c_2N|\leq \delta N}  \max_{s_i \in I_3, i\leq k \atop |\sum_i s_i^2 -c_3N |\leq \delta N } \log \E\Big( \exp \Big\{N(\Sigma_{1,3}(g,s) +2\Sigma_{2,3}(t,s) + \Sigma_{2,2}(t,t) )\Big\} \mathds{1}_{\mathcal{A}_{c_1}^\delta } \Big).
\end{align*}

To get the converse bound, we take $t\in  I^{l}_{2}, s\in I_{3}^{k}$, {optimizing the  above maximum where $\delta$ is replaced by $\delta/2$. 
We next  localize the integral on the set $\mathcal B_{\delta}$ where $|g_{i}-s_{i}|\le \delta/8\sqrt{NK}$, $1\le i\le k$, $|g_{i}-t_{i}|\le \delta/4$, $k+1 \le i\le k+l$. One can check that $\mathcal B_{\delta} \times \mathcal A^{{\delta}}_{c_{1}}\subset \mathcal{A}_{c_1,c_2c_3}^{\delta} \cap \mathcal{I}_{k,l}$}, and because $L\circ {\sqrt{.}}$ is Lipschitz, we have on this event
$$|\Sigma_{1,3}(g,s) +2\Sigma_{2,3}(t,s) + \Sigma_{2,2}(t,t)-(\Sigma_{1,3}(g,g) +2\Sigma_{2,3}(g,g) + \Sigma_{2,2}(g,g))|\le C\theta^{2}\delta,
$$
where $C$ is some positive constant.
Hence
\begin{eqnarray*}
&&\E\Big( \exp \Big\{N(\Sigma_{1,3}(g,g) + 2\Sigma_{2,3}(g,g) + \Sigma_{2,2}(g,g) )\Big\}\Car_{\mathcal{A}_{c_1,c_2c_3}^{\delta} \cap \mathcal{I}_{k,l}}\Big) \nonumber\\
&&\geq \frac{1}{(2\pi)^{\frac{k+l}{2}}}\prod_{1\le i\le k}\int_{g \in I_3 }\Car_{|g - s_i|\leq \delta/8\sqrt{NK}}e^{-\frac{1}{2}g^{2}} dg\prod_{1\le i\le l}\int_{g \in I_2} \Car_{|g-t_j| \leq \delta/4}e^{-\frac{1}{2}g^{2}} dg 
 \nonumber\\
&&\times  e^{-C N\theta^{2}\delta}   \E\Big( \exp \Big\{N(\Sigma_{1,3}(g,s) +2\Sigma_{2,3}(t,s) + \Sigma_{2,2}(t,t)) \Big\}\mathds{1}_{\mathcal{A}_{c_1}^\delta } \Big),\nonumber\\
&&\geq  e^{-\frac{N}{2} (c_2+ c_{3}) + O(\delta )N }\Big(\frac{\delta}{8\sqrt{2\pi NK}}\Big)^{k}\Big(\frac{\delta}{4\sqrt{2\pi}}\Big)^{l}
 \E\Big( \exp \Big\{N(\Sigma_{1,3}(g,s) +2\Sigma_{2,3}(t,s) + \Sigma_{2,2}(t,t)) \Big\}\mathds{1}_{\mathcal{A}_{c_1}^\delta } \Big)
\end{eqnarray*}
which completes the proof of Lemma \ref{expan} as $k+l= O(\sqrt{N}/\delta)$ and $K^{-1} \geq \delta$.

\end{proof}

Hence, we are left to estimate
$$\Lambda_{1}^{N}=\mathbb E\big[ \exp\big( N \Sigma_{1,3}(g,s)\big) \Car_{\mathcal{A}_{c_1}^{\delta}}\big],$$
where $s \in I_3^k$ satisfies $|\sum_i s_i^2-N c_{3}| \leq \delta N$ and $\mathcal{A}_{c_1}^\delta$ is defined in \eqref{defAc1}. Let $\hat \mu_{N}=\frac{1}{|J_{1}|}\sum_{i \in J_1}\delta_{g_{i}}$.
We can write 
$$\Sigma_{1,3}(g,s)=\frac{|J_{1}|}{N}\sum_{i=1}^{k}\int L\left(\frac{2 \theta x s_{i}}{\sqrt{N}c}\right) d\hat\mu_{N}(x).$$
The first difficulty in estimating $\Lambda_1^N$ lies in the fact that the function
$$ x \mapsto \sum_{i=1}^k L\Big( \frac{2\theta x s_i}{\sqrt{N} c} \Big),$$
is not bounded so that Varadhan's lemma (see \cite[Theorem 4.3.1]{DZ}) cannot be applied directly. The second issue is that we need a large deviation estimate which is uniform in the choice of $s \in I_3^k$ such that $|\sum_{i=1}^k s_i^2 -N c_3|\leq \delta N$. In the next lemma, we prove a uniform large deviation estimate of the type of Varadhan's lemma. The proof is postponed to the appendix \ref{proofVaradhan}.

\begin{lemma}
\label{Varadhan}
Let $f : \R \to \R$ such that $f(0) = 0$ and $f(\sqrt{ . })$ is a $L$-Lipschitz function. Let $M_N, m_N$ be sequences  such that $M_N = o(\sqrt{N})$ and $m_N\sim N$. Let $g_1,\ldots,g_{m_N}$ be independent Gaussian random variables conditioned to belong to  $[-M_N,M_N]$. Let $\delta\in(0,1)$ and $c>0$ such that $K^{-1} < c <K$ and $2\delta <K^{-1}$. Then,
\begin{align*}\Big|   \frac{1}{N} \log \E e^{\sum_{i=1}^{m_N} f\big( \frac{g_i}{\sqrt{c}}\big) } \Car_{| \sum_{i=1}^{m_N} g_i^2 - cN | \leq \delta N}& - \sup_{\nu \in \mathcal{P}([-M_N, M_N]) \atop \int x^2 d \nu =c} \Big\{ \int f\Big( \frac{x}{\sqrt{c}} \Big) d\nu(x) - H(\nu| \gamma) \Big\}\Big| \\
& \leq   \varepsilon_{L,K}(N) +\varepsilon_{L}(\delta K),\end{align*}
where $\varepsilon_{L,K}(N)$  (resp. $\varepsilon_{L}(x)$) goes to zero  as $N\to +\infty$ (resp. as $x \to 0$), while $L,K$ are fixed.
\end{lemma}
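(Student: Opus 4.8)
The plan is to prove a uniform version of Varadhan's lemma via classical large deviations for the empirical measure, paying attention to two technical points: the unboundedness of $f$ and the uniformity in $N$ coming from the $N$-dependent truncation window $[-M_N,M_N]$.

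\textbf{Step 1: reduce to a Sanov-type statement.} Write $\hat\nu_N = \frac{1}{m_N}\sum_{i=1}^{m_N}\delta_{g_i}$ for the empirical measure of the conditioned Gaussians. Then $\sum_i f(g_i/\sqrt{c}) = m_N\int f(x/\sqrt c)\,d\hat\nu_N(x)$ and $\frac1N\sum_i g_i^2 = \frac{m_N}{N}\int x^2\,d\hat\nu_N(x)$, so the quantity to estimate is
$$\frac1N\log\E\Big[\exp\Big(m_N\int f(x/\sqrt c)\,d\hat\nu_N\Big)\Car_{|\frac{m_N}{N}\int x^2 d\hat\nu_N - c|\le\delta}\Big].$$
Since $m_N\sim N$, the prefactor $m_N/N\to 1$. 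If the window $[-M_N,M_N]$ were fixed, say $[-M,M]$, this is a textbook application of Sanov's theorem (for i.i.d.\ truncated Gaussians, whose law has a density proportional to $\Car_{[-M,M]}e^{-x^2/2}$) combined with Varadhan's lemma, since on $[-M,M]$ the function $f$ is bounded and continuous and the constraint $|\int x^2 d\hat\nu_N - c|\le\delta$ is a closed/open set in the weak topology; the rate function is $\nu\mapsto H(\nu\,|\,\gamma_M)$ where $\gamma_M$ is the truncated Gaussian, and $H(\nu\,|\,\gamma_M) = H(\nu\,|\,\gamma) + \log\gamma([-M,M])$, the additive constant being negligible. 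One gets the variational formula with error $\eps_L(\delta K)$ coming from the $\delta$-fattening of the constraint and the mismatch $m_N/N\ne1$, using that $|f(x/\sqrt c)|\le L x^2/c$ controls these perturbations.

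\textbf{Step 2: handle the growing window $M_N = o(\sqrt N)$.} This is the main obstacle: for fixed $N$ the window is fixed, but the bound must be uniform as $N\to\infty$ with $M_N\to\infty$. The point is that $M_N = o(\sqrt N)$ means $M_N^2/N\to 0$, so a single variable cannot carry a macroscopic fraction of the total second moment $cN$: any $\nu\in\mathcal P([-M_N,M_N])$ with $\int x^2 d\nu = c$ necessarily ``uses up'' a density that is not too concentrated. More concretely, the upper bound direction needs a uniform exponential tail/truncation estimate: one shows $\frac1N\log\E\big[e^{\sum_i f(g_i/\sqrt c)}\Car_{\exists i:\,|g_i|>M_N}\cap\{\ldots\}\big]$ is negligible, which follows because producing even one entry of size $M_N$ costs $M_N^2/2$ in the Gaussian density while $f$ only gains $\le L M_N^2/c$, and more importantly because the second-moment constraint forces the bulk to rearrange at an entropy cost that blows up — here one invokes the exponential tightness already available (Remark \ref{tensionexporadius}) or a direct Gaussian computation. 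The lower bound is easier: restrict to $\nu$ supported in a fixed compact $[-M,M]\subset[-M_N,M_N]$ for $N$ large, apply the fixed-window Sanov lower bound, then let $M\to\infty$ and use that $\sup_{\nu\in\mathcal P([-M,M]),\int x^2=c}\{\int f\,d\nu - H(\nu|\gamma)\}$ increases to the supremum over $\mathcal P(\R)$ (with $\int x^2 = c$) because the optimal $\nu$ — a tilted Gaussian of the form $d\nu_*\propto e^{f(x/\sqrt c) - \lambda x^2}dx$ — has Gaussian tails and is well approximated by its restriction to $[-M,M]$.

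\textbf{Step 3: assemble the error terms.} Combining the two directions gives the two-sided bound with an error that splits into a part vanishing as $N\to\infty$ for fixed $L,K$ (the truncation/tightness terms and the Sanov approximation at finite $N$), collected into $\eps_{L,K}(N)$, and a part vanishing as $\delta K\to 0$ for fixed $L$ (the constraint-fattening and the $L$-Lipschitz perturbation of $f$ under small $L^2$-changes of the argument), collected into $\eps_L(\delta K)$. I expect Step 2 to be where essentially all the work is: making the control of large entries and the comparison of compact-support suprema genuinely uniform in $N$, rather than just pointwise, is the delicate point, and it is precisely why the lemma is stated with the $N$-dependent error $\eps_{L,K}(N)$ rather than a clean $N\to\infty$ limit.
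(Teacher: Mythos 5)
There is a genuine gap. You correctly locate the difficulty (uniformity in $N$ with the growing window $[-M_N,M_N]$ and an $N$-dependent reference law), but your Step 2 does not resolve it. First, your proposed ``truncation estimate'' for the upper bound concerns the event $\{\exists i:\ |g_i|>M_N\}$, which is empty by construction: the $g_i$ are \emph{conditioned} to lie in $[-M_N,M_N]$, so there is nothing to truncate, and Remark \ref{tensionexporadius} (exponential tightness of $\|X_N\|$ under tilted measures) is unrelated to this one-dimensional estimate. The real obstruction is different: Sanov plus Varadhan is an asymptotic statement for a \emph{fixed} underlying law and a functional that is continuous on a fixed state space, whereas here the law of each $g_i$, the constraint set $\{|\frac{m_N}{N}\int x^2 d\hat\nu_N-c|\le\delta\}$ and the supremum set $\mathcal P([-M_N,M_N])$ all move with $N$, and $f$ grows quadratically so $\nu\mapsto\int f(x/\sqrt c)\,d\nu$ is not weakly continuous uniformly in $N$. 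What the lemma asserts is a finite-$N$, two-sided bound with errors depending only on $L$ and $K$ (uniform over all admissible $f$ and all $c\in(K^{-1},K)$), and ``apply the fixed-window theorem, then note the window grows'' does not produce such uniform errors; this is precisely why the paper abandons the abstract LDP route and proves the statement by an explicit method-of-types argument: it discretizes $[-M_N,M_N]$ into geometric shells $I_k$ of ratio $1+\eps$ with $\eps=1/N$, uses the exact combinatorial bounds $(m_N+1)^{-n}e^{-m_NH(y|\gamma_{M_N})}\le \Pp(\mu_k=y_k,\forall k)\le e^{-m_NH(y|\gamma_{M_N})}$ with a subexponential number of types (here $M_N^2/(m_N\eps_N)\to0$ is used), and then quantitatively compares the discretized functional and entropy with $\int f(x/\sqrt c)\,d\nu$ and $H(\nu|\gamma)$ (Lemmas \ref{cont}, \ref{approxentro}, \ref{contfinf}), which is where the explicit errors $\varepsilon_{L,K}(N)$ and $\varepsilon_L(\delta K)$ come from.

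Your lower bound also has an unproved step: you exhaust $[-M_N,M_N]$ by a fixed compact $[-M,M]$ and claim the constrained supremum over $\mathcal P([-M,M])$ increases to the full one, with the optimizer a tilted Gaussian $d\nu_*\propto e^{f(x/\sqrt c)-\lambda x^2}dx$ having Gaussian tails. Since $|f(x/\sqrt c)|\le Lx^2/c$, normalizability requires $\lambda>L/c$, and the exact-moment constraint $\int x^2 d\nu=c$ need not be attainable by such a tilt (compare Lemma \ref{lemmin}, where for large constraint values the optimizer only satisfies an inequality constraint); moreover restricting a measure to $[-M,M]$ perturbs its second moment, so matching the exact constraint requires a rescaling argument, and the rate of convergence as $M\to\infty$ must be shown to depend only on $L,K$ to be absorbed into $\varepsilon_{L,K}(N)$. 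In the paper's proof none of this is needed: the lower bound is obtained directly for an arbitrary $\nu\in\mathcal P([-M_N,M_N])$ with $\int x^2d\nu=c$ by discretizing $\nu$ into a type $y\in\mathcal A_-$ and invoking the finite-$N$ type bound, so no compact exhaustion or identification of optimizers is required.
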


Let $s\in I_3^l$ such that $|\sum_i s_i^2 - Nc_3|\leq \delta N$. We consider the function 
 $$f : x \mapsto \sum_{i=1}^k L\Big( \frac{2\theta x s_i\sqrt{c_1}}{\sqrt{N
}c} \Big).$$
Using the fact that $\delta \leq c$, one can observe that $f(\sqrt{ . })$ is $8\theta^2L$-Lipschitz.
Using the previous lemma with $M_{N}=\delta N^{1/4}$, we deduce that for any $c_1 \geq K^{-1}$,
\begin{equation}\label{jhL}
\Big|\frac{1}{N}\log \Lambda_{1}^{N} - \sup_{\nu \in \mathcal{P}(I_1)  \atop \int x^2 d\nu(x) = c_1}\big\{ \sum_{i=1}^{k}\int L\Big(\frac{ 2\theta x s_{i}}{\sqrt{N} c}\Big) d\nu(x)- H(\nu|\gamma)\big\}\Big| \leq \varepsilon_{K}(N) + o_{\delta K}(1),
\end{equation}
where $\varepsilon_{K}(N)\to 0$ as $N\to+\infty$.  Putting together \eqref{decomp}, Lemma \ref{explem} and \eqref{jhL}, we obtain
$$
\frac{1}{N} \log \E\Big( \exp(\Sigma(g)) \Car_{\mathcal{A}_{c_1,c_2,c_3} \cap \mathcal{I}_{k,l}}\Big) - \Psi_{k,l}^{{(\delta)}}(c_1,c_2,c_3)  \leq \varepsilon_{\delta,K}(N) +  o_{\delta K}(1) +o_K(1)+{o_\delta(1)},$$
and {similarly for the lower bound where where $\Psi_{k,l}^{(\delta)}(c_1,c_2,c_3)$ is replaced by $\Psi_{k,l}^{(\delta/2)}(c_1,c_2,c_3) $;} with $\varepsilon_{K}(N)\to 0$ as $N\to+\infty$, whereas $\delta$ and $K$ are fixed, 
\begin{eqnarray*}
&& \Psi_{k,l}^{(\delta)}(c_1,c_2,c_3) = Q( c_1,  c_2,c_3)- \frac{1}{2}\big( c_2+c_3\big) \\
&&+\max_{ t_i \in I_2, i\leq l \atop  |\sum_i t_i^2 - c_2N |\leq \delta N }  \max_{s_i \in I_3, i\leq k \atop |\sum_i s_i^2 - c_3N| \leq \delta N} \Big\{\Sigma_{2,3}(t,s) + \Sigma_{2,2}(t,t) + \sup_{\nu \in \mathcal{P}(I_1) \atop \int x^2 d\nu_1(x) = c_1 } \{ \Phi(\nu, s) - H(\nu|\gamma_1)\} \Big\},
\end{eqnarray*}
 with
\begin{equation}\label{phi} Q(x,y,z) = \theta^2\frac{x^2 +2 xy + B z^2}{(x+y+z)^2}, \ \mbox{ and } \  \Phi_{c}(\nu,s) =  \sum_{i=1}^{k}\int L\Big(\frac{ 2\theta x s_{i}}{\sqrt{N}c}\Big) d\nu(x).\end{equation}
By Lemma \ref{fixconfig}, we obtain
\begin{equation} \label{total}  \frac{1}{N}\log F^N_{c_1,c_2,c_3}(\theta,\delta)- \max_{ k\leq 3\sqrt{N}/K  \atop l \leq 3\sqrt{N}/\delta} \Psi_{k,l}^{(\delta)}(c_1,c_2,c_3) \leq  \varepsilon_{\delta,K}(N) +  o_{\delta K}(1) +o_K(1)+o_\delta(1),\end{equation}
and similarly for the lower bound, {where $\Psi_{k,l}^{(\delta)}(c_1,c_2,c_3)$ is replaced by $\Psi_{k,l}^{(\delta/2)}(c_1,c_2,c_3) $}. To prove the lower bound of  Lemma \ref{WigGen2}, we can first write
$$ \liminf_{N\to +\infty} F_N(\theta) \geq \liminf_{N\to +\infty} \frac{1}{N}\log F^N_{c_1,c_2,c_3}(\theta,{2}\delta).$$
Using \eqref{total}, we deduce
 $$ \liminf_{N\to +\infty} F_N(\theta) \geq  \liminf_{N\to +\infty} \max_{ k\leq 3\sqrt{N}/K  \atop l \leq 3\sqrt{N}/\delta} \Psi_{k,l}^{(\delta)}(c_1,c_2,c_3) -  o_{\delta K}(1) -o_K(1)-o_\delta(1).$$
To complete the proof of the lower bound, one can observe that 
$$ \Sigma_{2,2}(t,t) = \frac{1}{N}  \sum_{i,j}  L\Big( 2\theta \frac{t_i t_j}{\sqrt{N} c}\Big) + o_N(1),$$
uniformly in $t_i \in I_2$ such that $|\sum_{i} t_i^2 - c_2 N|\leq \delta N$. Indeed, the diagonal terms are negligible in this case since 
$$ \frac{1}{N} \sum_{t_i\in J_2}\frac{t_i^4}{Nc^2}\leq {\frac{K}{c\sqrt{N}}} \frac{1}{N} \sum_{i\in J_2} t_i^2 =\eps_{K}(N).$$
To complete the proof of the upper bound, we will use the exponential tightness of $|| g||^2$ and of $|| g||_{J_1}^{-2}$. More precisely, we claim that
\begin{equation} \label{tensionexponorm}\lim_{K \to +\infty} \limsup_{N\to +\infty} \frac{1}{N} \Pp( || g||^2\geq KN, || g||_{J_1}^2 \leq K^{-1} N) =-\infty.\end{equation}
Indeed, it is clear by Chernoff's inequality that for {$N$ large enough}
$$ \Pp( || g||^2\geq K N) \leq Ce^{-CN K},$$
where $C$ is a positive numerical constant. Whereas, using Lemma \ref{fixnorm} and a union bound,
$$ \Pp( ||g||_{J_1}^2 \leq K^{-1} N) \leq \sum_{m \leq 3 \sqrt{N}/\delta } \binom{N}{m}\Pp\big( \sum_{i=1}^{N-m}g_i^2\leq K^{-1} N\big).$$
By Chernoff's inequality, we have for any $m\leq N$,
$$ \Pp\big( \sum_{i=1}^{N-m}g_i^2\leq K^{-1} N\big) \leq e^{-(N-m)\Lambda^*(K^{-1})},$$
where $\Lambda^* (x) = \frac{x}{2} - \frac{1}{4}-\frac{1}{2} \log (2x)$ for any $x>0$. Since $\Lambda^*(K^{-1}) \to +\infty$ as $K \to  +\infty $ and for any $m\leq 2 \sqrt{N}/\delta$, the binomial 
$ \binom{N}{m}$ is negligible in the exponential scale, 
we deduce the claim \eqref{tensionexponorm}.

Using that $L(x)\leq Ax^2/2$ for any $x\in \R$, we have 
$$ \Sigma(g) \leq A\theta^2 N.$$
From the exponential tightness \eqref{tensionexponorm}, we deduce that for $K$ large enough
\begin{equation} \label{trunc} \E \exp(\Sigma(g)) \Car_{\{ \frac{1}{N}|| g||^2 \geq K, \frac{1}{N}||g||_{J_1}^2 \leq K^{-1}\}  } \leq 1.\end{equation}
Let $\mathcal{E}_K =\{ \frac{1}{N}|| g||^2 \geq K, \frac{1}{N}||g||_{J_1}^2 \leq K^{-1} \} $. We have
$$0 \leq  \limsup_{N\to +\infty} F_N(\theta) \leq \max\Big( \limsup_{N\to +\infty}\frac{1}{N}\log \E \big(e^{\Sigma(g)} \Car_{ \mathcal{E}_K }\big),\limsup_{N\to +\infty}\frac{1}{N}\log \E\big( e^{\Sigma(g)} \Car_{ \mathcal{E}_K^c}\big)\Big)$$
Since we took $K$ so that \eqref{trunc} holds and $F_{N}(\theta)\ge 0$ {as $X_N$ is centered}, we deduce that
\begin{equation}\label{truncation} \limsup_{N\to +\infty} F_N(\theta) \leq  \limsup_{N\to +\infty}\frac{1}{N}\log \E\big( e^{\Sigma(g)} \Car_{\mathcal{E}_K }\big).\end{equation}
Let now $\mathcal{C}_{\delta}$ be a $\delta$-net for the $\ell^{\infty}$-norm of the set 
$$ \{ (c_1,c_2,c_3) \in \R_+^3 : c_1+c_2+c_3 \leq K , c_1\geq K^{-1}\}.$$
As $| \mathcal{C}_\delta| =O( K/\delta^{ 3})$ is independent of $N$, we have
$$\limsup_{N\to +\infty} F_N(\theta) \leq \max_{(c_1,c_2,c_3) \in \mathcal{C}_\delta} \limsup_{N\to +\infty }\frac{1}{N} \log F^N_{c_1,c_2,c_3}(\theta,\delta).$$
From \eqref{total}, we deduce
$$ \limsup_{N\to +\infty}F_N(\theta) \leq  \max_{c = c_1+c_2+c_3\atop c_1 \geq K^{-1}  } \limsup_{N\to +\infty} \max_{ k\leq 3\sqrt{N}/K \atop l \leq 3\sqrt{N}/\delta } \Psi_{k,l}^{(\delta)}(c_1,c_2,c_3) + o_{\delta K}(1) +o_K(1)+o_\delta(1).$$
Taking now the limit as $\delta \to 0$ and $K\to +\infty$ such that $\delta K\to 0$, we obtain the upper bound of Lemma \ref{WigGen2}.
\end{proof}

We are now ready to prove Proposition \ref{simpleFint}. Building on Lemma \ref{WigGen2}, we show that we can optimize on the total norm $c$ in order to simplify the variational problem.

\begin{proof}[Proof of Proposition \ref{simpleFint} ] \label{proofsimple} We use the notation of Lemma \ref{WigGen2}. {Observe that $ F^N_{0,c_2,c_3}(\delta,K) =-\infty$ for any $c_2,c_3\geq 0$}. Therefore, we may assume that $c =c_1+c_2+c_3>0$. 
We make the following changes of variables. For $ i=1,2,3$, we set $\alpha_i = \frac{c_i}{c}$, and if $h_{c} : x\mapsto x/\sqrt{c}$, and $\nu \in \mathcal{P}(\R)$, {we set $\nu_1 = h_c \# \nu$ to be the push-forward of $\nu$ by $h_c$, defined for any bounded continuous function $f$ to satisfy}
\begin{equation}\label{pushf} \int f(x) d\nu_{1}(x)=\int f\Big(\frac{x}{\sqrt{c}}\Big) d\nu(x).\end{equation}
For any $\nu \in \mathcal{P}(I_1)$ such that $\int x^2 d\nu(x) = c_1$, we have $\int x^{2} d\nu_{1}(x)=\alpha_{1}$ and
 \begin{equation}\label{pushforw} H(\nu | \gamma) = H(\nu) + \frac{1}{2} c_1 +\frac{1}{2} \log (2\pi) \mbox{ and }H(\nu)=\int\log \frac{d\nu}{dx} d\nu(x)=H(\nu_{1})+ \frac{1}{2}\log c.\end{equation}
We obtain
$$\overline{F}(\theta)=  \limsup_{ \delta \to 0, K\to +\infty \atop \delta K \to 0} \sup_{ \alpha_1 + \alpha_2 +\alpha_3=1\atop \alpha_i \geq 0}\sup_{c\geq 0}  \limsup_{N\ra+\infty} \mathcal{F}^N_{\alpha_1,\alpha_2,\alpha_3,c}(\delta,K),$$
and similarly for $\underline{F}(\theta)$,
where
\begin{align*}&\mathcal{ F}^N_{\alpha_1,\alpha_2,\alpha_3,c}(\delta,K)  = \sup_{s_i\in I_3 \atop
|\sum s_i^2-N\alpha_3|\leq N\delta } \sup_{t_i \in I_2\atop
|\sum t_i^2- N \alpha_2|\leq N\delta }\Big\{  \theta^2\big( \alpha_1^2 +2\alpha_1 \alpha_2 +B\alpha_3^2 \big) - \frac{1}{2}c +\frac{1}{2}\log c  \\
&\quad+ \frac{1}{N}\sum_{i,j} L\Big(\frac{2 \theta  s_i t_j}{\sqrt{N}}\Big)  
 +\frac{1}{2N}\sum_{i, j} L\Big(\frac{ 2\theta  t_i t_j}{\sqrt{N}}\Big) + \sup_{\nu_{1} \in \mathcal{P}(h_{c}(I_1)) \atop \int x^2 d\nu_1(x) = \alpha_1 }\big\{ \Phi_{1}(\nu_{1}, s) -H(\nu_{1}) \big\}\Big\}  - \frac{1}{2}\log (2\pi),
\end{align*}
Note that $h_c(I_1) =[-\sqrt{ \delta} N^{1/4},\sqrt{\delta} N^{1/4}]$ is independent of $c$. Optimizing over $c$, and see that the maximum is achieved at $c=1$ {for $N$ large enough}.


\end{proof}

 





\section{The large deviations close to the bulk}\label{closebulk}
 We prove in this section Proposition  \ref{Propsmall}. 
By Theorem \ref{maintheo}, the large deviation lower bound holds at every $x >2$ such that $I(x)=\bar I(x) \neq 0$ 
so that there exists $\theta\in \Theta_{x}$ which does not belong to any $\Theta_{y}$ for $y\neq x$. 
In the following lemma, we prove that if $\overline{F}(\theta) =\underline{F}(\theta)=\theta^2$ on a interval $(0,b)$ with $b>1/2$, then the large deviation lower bound holds in  a neighborhood of $2$ and the rate function $I$ is equal to the one of the GOE.

\begin{lemma}\label{smallem}
If for $\theta \in \big(0,\frac{1}{2\eps}\big)$, for some $\eps \in (0,1)$, $  \overline{F}(\theta)=\underline{F}(\theta) = \theta^2,$
then for any $x \in [2,\eps + \frac{1}{\eps})$, 
$$\bar I(x)=I(x)=  I_{GOE}(x).$$
As a consequence, $\bar I(x) >0$ for any $x>2$. Moreover, for $x \in [2,\eps + \frac{1}{\eps})$ the optimizer in $I$ is taken at $\theta_{x}=1/2G(x)$ and $\theta_{x}\notin \Theta_{y}$ for all $y\in [2,+\infty)\backslash \{x\}$.

\end{lemma}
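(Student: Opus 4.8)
The plan is to unfold the definitions and reduce everything to the already-known GOE computation from \cite{HuGu}. First I would recall that, by hypothesis, $\overline{F}(\theta) = \underline{F}(\theta) = \theta^2$ for all $\theta \in (0, 1/2\eps)$, so in particular $(0,1/2\eps) \subset \mathcal{C}_\mu$ and on this interval $F(\theta) = \theta^2$. From \cite[Section 4.1]{HuGu} (quoted in the proof of Theorem \ref{upperbound}) we have $\sup_{\theta \geq 0}\{J(x,\theta) - \theta^2\} = I_{GOE}(x)$, and the elementary analysis of this optimization shows that the supremum is attained at $\theta_x = \tfrac12 G_\sigma(x)$: indeed by Lemma \ref{calculJ}, for $\theta \leq \tfrac12 G_\sigma(x)$ the function $J(x,\theta) - \theta^2 = 0$, while for $\theta \geq \tfrac12 G_\sigma(x)$ one computes $\partial_\theta(J(x,\theta) - \theta^2) = x - \tfrac1{2\theta} - 2\theta$, which is $\leq 0$ precisely when $2\theta \geq G_\sigma(x)$ (using $G_\sigma(x) = \tfrac12(x - \sqrt{x^2-4})$, so that $x - \tfrac1{2\theta} = 2\theta$ has solutions $2\theta = G_\sigma(x)$ and $2\theta = x - G_\sigma(x)$, and the relevant branch is $2\theta = G_\sigma(x)$). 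Hence the unconstrained optimizer $\theta_x = \tfrac12 G_\sigma(x)$ lies in $(0,1/2\eps)$ as soon as $G_\sigma(x) < 1/\eps$, i.e. as soon as $x < \eps + 1/\eps$, since $G_\sigma$ maps $(2,+\infty)$ increasingly onto $(0,1)$ and $G_\sigma(\eps + 1/\eps) = \eps$.

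Next I would argue that for such $x$ the constrained and unconstrained problems agree. Since $\theta_x \in (0,1/2\eps) \subset \mathcal{C}_\mu$ and $F = \overline{F}$ there, and since $\overline{F}(\theta) \geq \theta^2$ for all $\theta \geq 0$ by \eqref{lowerboundF}, we get
$$\bar I(x) = \sup_{\theta \geq 0}\{J(x,\theta) - \overline{F}(\theta)\} \leq \sup_{\theta \geq 0}\{J(x,\theta) - \theta^2\} = I_{GOE}(x) = J(x,\theta_x) - \theta_x^2 = J(x,\theta_x) - F(\theta_x) \leq I(x).$$
On the other hand $I(x) \leq \bar I(x)$ trivially since $\mathcal{C}_\mu \subset \R^+$ and $F = \overline F$ on $\mathcal{C}_\mu$. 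Thus $\bar I(x) = I(x) = I_{GOE}(x)$ for $x \in [2, \eps + 1/\eps)$, and the optimizer in the definition of $\bar I(x)$ is exactly $\theta_x = \tfrac12 G_\sigma(x)$. Since $I_{GOE}$ is strictly increasing and positive on $(2,+\infty)$, this also gives $\bar I(x) > 0$ for all $x > 2$ (for $x \geq \eps + 1/\eps$ one uses monotonicity of $\bar I$, which follows e.g. from Lemma \ref{calculJ} together with the fact that $J(\cdot,\theta)$ is non-decreasing and $\bar I \leq I_{GOE}$ gives nothing directly, so more carefully: $\bar I$ is convex by Proposition \ref{convexrate} and vanishes only at $2$, hence is strictly increasing on $[2,+\infty)$).

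Finally, for the statement that $\theta_x \notin \Theta_y$ for all $y \in [2,+\infty) \setminus \{x\}$, I would mimic the argument in the proof of Lemma \ref{lowerb}. Suppose $\theta_x \in \Theta_y$ for some $y \neq x$. Since $\bar I(y) = I_{GOE}(y) > 0$ for $y > 2$ (and $\bar I(y) \geq $ the value at any $\theta$, giving a positive value when $y > 2$), the optimizer for $\bar I(y)$ must satisfy $\theta > \tfrac12 G_\sigma(y)$, and by Fermat's rule $\partial_\theta J(y,\theta_x) = \partial_\theta \overline{F}(\theta_x)$; since $\overline{F}(\theta) = \theta^2$ near $\theta_x$ this forces $\partial_\theta J(y,\theta_x) = 2\theta_x = \partial_\theta J(x,\theta_x)$. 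If $G_\sigma(y) < 2\theta_x$ then Lemma \ref{calculJ} gives $y - \tfrac1{2\theta_x} = x - \tfrac1{2\theta_x}$, so $y = x$, a contradiction; if $G_\sigma(y) \geq 2\theta_x$ then $J(y,\cdot)$ is quadratic at $\theta_x$ and $\partial_\theta J(y,\theta_x) = 2\theta_x$ automatically, but then the optimizer for $\bar I(y)$ would satisfy $\theta_y > \tfrac12 G_\sigma(y) \geq \theta_x$ with $\bar I(y) = 0$, contradicting $\bar I(y) > 0$. I expect the only mildly delicate point to be keeping track of which branch of the critical equation $x - \tfrac1{2\theta} = 2\theta$ is relevant and the resulting range $x < \eps + 1/\eps$; the rest is bookkeeping with Lemma \ref{calculJ}.
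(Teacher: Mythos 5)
Your overall strategy is the paper's: sandwich $I(x)\le \bar I(x)\le \sup_{\theta\ge 0}\{J(x,\theta)-\theta^2\}=I_{GOE}(x)$ and then observe that the unconstrained optimizer lies in $(0,\tfrac{1}{2\eps})\subset\mathcal C_\mu$ where $F(\theta)=\theta^2$, so the constrained supremum catches up. However, there is a genuine error at the key step: the optimizer of $\theta\mapsto J(x,\theta)-\theta^2$ is $\theta_x=\tfrac{1}{2G_\sigma(x)}$, not $\tfrac12 G_\sigma(x)$. Indeed, at $\theta=\tfrac12 G_\sigma(x)$ one has $J(x,\theta)=\theta^2$ by Lemma \ref{calculJ}, so $J(x,\theta)-\theta^2=0$, and your asserted equality $I_{GOE}(x)=J(x,\theta_x)-\theta_x^2$ would force $I_{GOE}(x)=0$ for all $x$. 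Your sign analysis of the derivative is where this goes wrong: writing $u=2\theta$, $x-\tfrac1{2\theta}-2\theta=-(u^2-xu+1)/u$, whose roots are $u=G_\sigma(x)$ and $u=1/G_\sigma(x)$ (their product is $1$); the derivative is \emph{positive} for $u\in(G_\sigma(x),1/G_\sigma(x))$ and negative only beyond $1/G_\sigma(x)$, so $J(x,\cdot)-\theta^2$ increases from $0$ on $[\tfrac12 G_\sigma(x),\tfrac1{2G_\sigma(x)}]$ and then decreases, with unique maximizer $\tfrac1{2G_\sigma(x)}$ for $x>2$. This error also empties the role of the hypothesis $x<\eps+\tfrac1\eps$: with your $\theta_x$ the condition $\theta_x<\tfrac1{2\eps}$ is automatic (and note $G_\sigma$ is \emph{decreasing} from $1$ to $0$ on $[2,+\infty)$, not increasing), whereas the actual point of the restriction is that $\tfrac{1}{2G_\sigma(x)}<\tfrac1{2\eps}$ if and only if $G_\sigma(x)>\eps$, i.e. $x<G_\sigma^{-1}(\eps)=\eps+\tfrac1\eps$.

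The mistake propagates to the last claim. With $\theta_x=\tfrac12 G_\sigma(x)$ one does not even have $\theta_x\in\Theta_x$ (since $J(x,\theta_x)-F(\theta_x)=0\neq\bar I(x)$), and moreover $J(2,\theta_x)-\theta_x^2=0=\bar I(2)$ because $2\theta_x=G_\sigma(x)<G_\sigma(2)=1$, so $\theta_x\in\Theta_2$, contradicting the statement you are trying to prove; your case analysis does not treat $y=2$, where ``$\bar I(y)>0$'' is unavailable. With the correct $\theta_x=\tfrac1{2G_\sigma(x)}>\tfrac12$ this case is harmless, since $J(2,\cdot)-\theta^2$ is strictly decreasing past $\tfrac12$, so $J(2,\theta_x)-\theta_x^2<0=\bar I(2)$, and for $y>2$ your two cases (either $\partial_\theta J(y,\theta_x)=y-\tfrac1{2\theta_x}$ forces $y=x$, or $J(y,\theta_x)-\theta_x^2=0<\bar I(y)$) do work; the positivity of $\bar I$ on $(2,+\infty)$ via convexity (Proposition \ref{convexrate}) together with $\bar I=I_{GOE}>0$ near $2$ is fine and not circular. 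In short: replace $\tfrac12 G_\sigma(x)$ by $\tfrac{1}{2G_\sigma(x)}$ and redo the monotonicity analysis of $J(x,\cdot)-\theta^2$ accordingly, and your argument becomes essentially the paper's proof; as written, the central identification is false and the conclusion does not follow.
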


\begin{proof}
As $\underline F(\theta) \geq \theta^2$ for any $\theta \geq 0$,  we have that
$$\sup_{ \theta \in [0,1/2\eps)} \big\{ J(\theta,x) - \theta^2 \big\} \leq I(x)\le \bar  I(x) \leq \sup_{\theta \geq 0} \big\{ J(\theta,x) - \theta^2 \big\}.$$
But if $x \in [2,\eps+\frac{1}{\eps})$,
$$ I_{GOE}(x) = \sup_{\theta \geq 0} \big\{J(\theta,x) - \theta^{2} \big\}$$
is achieved at $\theta = 1/2G(x) \in (0,1/2\eps)$ since $G^{-1}(\eps) =\eps +1/\eps$. Therefore, if $x \in [2,2\eps+\frac{1}{2\eps})$, then we obtain
$$ I(x)=\bar I(x) = I_{GOE}(x).$$
The consequences are obvious as $G$ is invertible on $[2,+\infty)$.

\end{proof}

The result of Proposition \ref{Propsmall} then follows from Lemma \ref{smallem} and Lemma \ref{F0}. 
We now study the  convergence of the annealed spherical integrals for large values of $\theta$, for which we need to make additional assumptions on  $\mu$.

\section{Case where $\psi$ is an increasing function}\label{incsec}
In this section we make the additional assumption that
$\psi$ is non-decreasing. This assumption is in particular satisfied in the sparse Gaussian case below. 

\begin{example}[Sparse Gaussian distribution] Let $\mu$ be the law of $\xi \Gamma$ where $\xi$ is a Bernoulli variable of parameter $p\in(0,1)$ and $\Gamma$ is a standard Gaussian random variable.  In that case we have for any $ x \in \R$,
\[ \psi(x)  = \frac{\log[ (1-p) + p \exp( x^2/2p)]}{x^2}={\int_0^1 \frac{  t}{ (1-p)  \exp( -(xt)^2/2p)+ p } dt}\] 
is  increasing in $x$ as the integral of increasing functions on $\mathbb R^{+}$.
\end{example}
\subsection{Simplification of the variational problem}
We prove in this section that when $\psi$ is increasing on $\R^+$, $\mathcal C_\mu=\mathbb R^+$ and we can simplify the limit $F(\theta)$ as follows.
\begin{proposition}\label{simplif}
For any $\theta \geq 0$, $\overline{F}(\theta)=\underline{F}(\theta)=F(\theta)$ where
$$F(\theta)=
\sup_{\alpha \in [0,1]} 
\sup_{\nu\in \mathcal{P}(\R)\atop \int x^2 d\nu(x)=\alpha}\bigg\{ \theta^2\alpha^2+ B\theta^2(1-\alpha)^2+ 
\int L(2\theta \sqrt{1-\alpha} x) d\nu(x)- H(\nu) - \frac{1}{2} \log (2\pi)-\frac{1}{2}\bigg\}
$$
\end{proposition}

\begin{proof}With the notation of Proposition \ref{simpleFint}, we need to bound, for any $\delta, K>0$, and $\alpha_1+\alpha_2+\alpha_3 = 1$,
the quantity $ \mathcal{F}^N_{\alpha_1,\alpha_2,\alpha_3}(\delta, K)$.
Since $\psi$ is non-decreasing {on $\R^+$ and symmetric}, we have  for any $s \in I_3^k$ such that $|\sum_i s_i^2 -\alpha_3 N|\leq N\delta $, that for any $i \in\{1,\ldots ,k\}$, $s_{i}\le   \sqrt{(\alpha_{3}+\delta)N}$ so that for any $x\in\R$,
$\psi(2\theta s_i x/\sqrt{N})\le \psi(2\theta \sqrt{(\alpha_{3}+\delta)}x)$. Thus,
\begin{align*}
  \sum_{i=1}^k \int L \Big( \frac{2\theta s_i x}{\sqrt{N}}\Big) d\nu_{1}(x) & = \frac{4\theta^2}{N} \sum_{i=1}^k s_i^2  \int x^2\psi\Big(\frac{2\theta s_i x}{\sqrt{N} }\Big) d\nu_1(x) \\
&\leq  \int L\Big(2\theta \sqrt{\alpha_3+\delta} x\big) d\nu_1(x)= \int L\Big(2\theta \sqrt{\alpha_3} x\big) d\nu_1(x)+o_{\delta}(1)
\end{align*}
where we finally use that $L\circ\sqrt{.}$ is Lipschitz.
On the other hand, since $L(x) \leq Bx^2/2$ for any $x\geq 0$,
$$\frac{1}{N}\sum_{j=1}^{l}\sum_{i=1}^k  L \Big( \frac{2\theta s_i t_{j}}{\sqrt{N}}\Big)   +\frac{1}{2N}  \sum_{i,j=1}^{l}L\Big(\frac{2\theta t_{i} t_{j}}{\sqrt{N}}\Big)  \leq \theta^2 B (\alpha_2^2+2\alpha_3\alpha_2)+o_{\delta}(1).$$
Therefore, we have the upper bound,
\begin{align*}
 \overline{F}(\theta) &\leq \sup_{\alpha_1+\alpha_2+\alpha_3 = 1} \sup_{ \nu_1 \in \mathcal{P}(\R) \atop \int x^2 d\nu_1(x) = \alpha_1} \Big\{ \theta^2 \big( \alpha_1^2 +2\alpha_1 \alpha_2 \big)+ \theta^2 B (\alpha_3+\alpha_2)^2\\
&+ \int L(2\theta \sqrt{\alpha_3} x) d\nu_1(x)-H(\nu_1)- \frac{1}{2} \log (2\pi) - \frac{1}{2}\Big\}.
\end{align*}
We can further simplify this optimization problem by showing that the assumption on the monotonicity of $\psi$ entails that we can take $\alpha_2 = 0$ in the above RHS. Indeed, note that $\psi(0) = 1/2$. Therefore, $\psi$ is bounded below by $1/2$ everywhere. Hence, we deduce that
\begin{align*}
2 \theta^2 \alpha_1 \alpha_2 + \int L(2\theta \sqrt{\alpha_3} x) d\nu_1(x)  & = 2\theta^2 \alpha_1\alpha_2 + 4\theta^2 \alpha_3 \int x^2 \psi(2\theta \sqrt{\alpha_3} x) d\nu_1(x)\\
& \leq 4\theta^2 (\alpha_2+\alpha_3) \int x^2 \psi(2\theta \sqrt{\alpha_2+\alpha_3}x ) d\nu_1(x).
\end{align*}
Thus, with the change of variables $\alpha_3+\alpha_2\ra\alpha_3$, we conclude that
\begin{align*}
 \overline{F}(\theta) &\leq \sup_{\alpha_1+\alpha_3= 1} \sup_{ \nu_1 \in \mathcal{P}(\R) \atop \int x^2 d\nu_1(x) = \alpha_1} \Big\{ \theta^2  \alpha_1^2 + \theta^2 B \alpha_3^2+ \int L(2\theta \sqrt{\alpha_3} x) d\nu_1(x)-H(\nu_1)- \frac{1}{2} \log (2\pi) - \frac{1}{2}\Big\}.
\end{align*}
To prove that $\underline{F}(\theta)$ is bounded from below by the same quantity, we fix $\alpha_1,\alpha_2,\alpha_3$ such that $\alpha_1 +\alpha_2+\alpha_3 = 1,\alpha_2=0$, and $\nu \in \mathcal{P}(\R)$ such that $\int x^2 d\nu(x) = \alpha_1$. We take in the definition of $\mathcal{F}_{\alpha_1,\alpha_2,\alpha_3}^N(K,\delta)$, $k=1$, $s = \sqrt{\alpha_3 N }$,  and $\nu_1=h_{\lambda}\# \frac{1}{\nu(I_1)} \nu(.\cap I_1) ,$ with the notations of \eqref{pushf}. We take $\lambda$  such that $\int x^2 d\nu_1(x) = \alpha_1$, which goes to $1$ as $N$ goes to infinity. We have
\begin{align*} \mathcal{F}_{\alpha_1,\alpha_2,\alpha_3}^N(K,\delta)& \geq  \theta^2\big(\alpha_1^2+ B\alpha_3^{2}\big)  + \int L(2\theta \sqrt{\alpha_3} x) d \nu_1(x) - H( \nu_1)- \frac{1}{2} \log (2\pi)-\frac{1}{2}.
\end{align*}
We deduce by  monotone convergence and the fact that $L(\sqrt{.})$ is Lipschitz that
\begin{align*}   \underline{F}(\theta) \geq  \theta^2\big(\alpha_1^2+ B\alpha_3\big)  + \int L(2\theta \sqrt{\alpha_3} x) d \nu(x) - H( \nu)- \frac{1}{2} \log (2\pi)-\frac{1}{2}.\end{align*}
\end{proof}

We next compute the supremum over $\nu$ in the definition of $F$ in Proposition \ref{simplif}. To this end, we denote by $G : [B/2, + \infty) \to \R \cup\{+\infty\}$ the function given by
\begin{equation}\label{defG} \forall \zeta \in [B/2,+\infty), \ G( \zeta) = \log \int \exp( L(x) - \zeta x^2)dx\,. \end{equation}

\begin{lemma}\label{lemmin}
Let 
\begin{equation} \label{defl} l= - \lim_{\zeta \to B/2} G'(\zeta) \in (0,+\infty]. \end{equation}
For any $C \in (0,l)$, there exists a unique $\zeta \in (B/2,+\infty)$ solution to the equation
\[ G'( \zeta) = -C\,. \]
It is denoted by $\zeta_C$. For $C\geq l$, we set $\zeta_C = B/2$.
Then,
\begin{align*}
 \sup_{\nu \in \mathcal{P}(\R) \atop \int x^2 d \nu(x) = C} \Big [ \int L(x) d\nu(x) - H(\nu) \Big] & =
\sup_{\nu \in \mathcal{P}(\R) \atop \int x^2 d \nu(x) \leq C} \Big[ \frac{B C}{2} + \int \Big( L(x) -\frac{B }{2}x^2\Big)  d\nu(x) - H(\nu) \Big]\\
& = C \zeta_C + G( \zeta_C) 
\end{align*}

\end{lemma}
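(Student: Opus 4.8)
The plan is to recognize the supremum over $\nu$ as a Legendre-type duality computation for the entropy functional, with the variance constraint handled by a Lagrange multiplier. First I would establish the second equality in the display: the map $C \mapsto \sup\{\int L\,d\nu - H(\nu) : \int x^2\,d\nu = C\}$ and its "$\leq C$" counterpart. For this I would argue that the constrained supremum over $\{\int x^2\,d\nu \leq C\}$ is attained on the boundary $\{\int x^2\,d\nu = C\}$ whenever $C$ is small enough that the constraint is active, i.e. when $C < l$; here the point is that without any constraint the optimal $\nu$ minimizing $H(\nu) - \int(L-\tfrac B2 x^2)\,d\nu$ would have second moment determined by $B/2$, and $l$ is precisely that critical second moment (this is why $l = -\lim_{\zeta\to B/2}G'(\zeta)$ enters). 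When $C \geq l$ the unconstrained optimum already satisfies $\int x^2\,d\nu \leq C$, so both suprema coincide and equal the value at $\zeta_C = B/2$. The rewriting $\int L\,d\nu - H(\nu) = \tfrac{BC}{2} + \int(L - \tfrac B2 x^2)\,d\nu - H(\nu)$ on $\{\int x^2\,d\nu = C\}$ is just adding and subtracting $\tfrac B2\int x^2\,d\nu = \tfrac{BC}{2}$.

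Next I would carry out the actual optimization. Introducing a Lagrange multiplier $\zeta$ for the constraint $\int x^2\,d\nu \leq C$ (equivalently $=C$ in the active regime), one forms
$$
\sup_{\nu \in \mathcal P(\R)}\Big[ \int L(x)\,d\nu(x) - H(\nu) - \zeta\Big(\int x^2\,d\nu(x) - C\Big)\Big].
$$
Since $H(\nu) = \int \log\frac{d\nu}{dx}\,d\nu$ is the (negative) Boltzmann entropy relative to Lebesgue measure, the classical Gibbs variational principle gives that for fixed $\zeta > B/2$ (so that $e^{L(x) - \zeta x^2}$ is integrable, using $L(x) \leq \tfrac A2 x^2$ and — wait, one needs $\zeta > B/2$ together with sub-Gaussianity; more precisely $L(x) - \zeta x^2 \to -\infty$ requires $\limsup L(x)/x^2 = B/2 < \zeta$, which is exactly the condition defining the domain of $G$), the inner supremum over $\nu \in \mathcal P(\R)$ equals $\zeta C + \log\int e^{L(x) - \zeta x^2}\,dx = \zeta C + G(\zeta)$, attained at the tilted measure $\nu_\zeta(dx) \propto e^{L(x)-\zeta x^2}\,dx$. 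Then I would optimize over $\zeta$: the function $\zeta \mapsto \zeta C + G(\zeta)$ is convex (as $G$ is convex, being a log-Laplace-type transform), with derivative $C + G'(\zeta)$, which vanishes exactly when $G'(\zeta) = -C$. Since $G'$ is increasing from $-l$ (as $\zeta \downarrow B/2$) to $0$ (as $\zeta \to +\infty$, by dominated convergence $G'(\zeta) = -\int x^2\,d\nu_\zeta \to 0$), for $C \in (0,l)$ there is a unique root $\zeta_C \in (B/2,+\infty)$, giving the value $C\zeta_C + G(\zeta_C)$; for $C \geq l$ the infimum over $\zeta \in [B/2,\infty)$ of this convex function is at the boundary $\zeta = B/2 =: \zeta_C$, matching the convention in the statement.

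The two remaining points requiring care: (i) justifying the interchange — namely that $\sup_\nu \inf_\zeta = \inf_\zeta \sup_\nu$ — which follows from a minimax argument (the objective is concave in $\nu$, convex in $\zeta$, and one can invoke a Sion-type theorem, or more concretely just verify directly that the explicit candidate optimizer $\nu_{\zeta_C}$ satisfies the constraint $\int x^2\,d\nu_{\zeta_C} = C$ when $C<l$ and is thus admissible, while weak duality gives the matching upper bound); and (ii) checking integrability and finiteness of $H$ along the way, using $L(x) \leq \tfrac A2 x^2$ from Assumption \ref{AG} together with $\limsup_{|x|\to\infty} L(x)/x^2 = B/2$ so that $L(x) - \zeta x^2$ is bounded above and decays like $-(\zeta - B/2)x^2$ for $\zeta > B/2$, ensuring $G(\zeta) < \infty$ on $(B/2,+\infty)$. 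The main obstacle is the boundary case $\zeta_C = B/2$: one must show the supremum is genuinely a supremum (possibly not attained, since $\nu_{B/2}$ may fail to be a probability measure if $e^{L(x) - \tfrac B2 x^2}$ is not integrable) and that the value $\tfrac{BC}{2} + G(B/2)$ is still correct — this is handled by a monotone/continuity argument in $\zeta \downarrow B/2$, using that $G$ is lower semicontinuous and $\zeta C + G(\zeta) \to \tfrac{BC}{2} + G(B/2)$ along with the fact that for $C \geq l$ the constraint $\int x^2\,d\nu \leq C$ is no longer binding so the "$\leq$" formulation makes the supremum well-defined and equal to this limit.
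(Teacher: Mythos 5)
Your duality route for the core computation is sound and genuinely different from the paper's argument: you obtain the value $C\zeta_C+G(\zeta_C)$ by the Gibbs variational principle plus weak duality in the multiplier $\zeta$, checking that the tilted measure $\nu_{\zeta_C}\propto e^{L(x)-\zeta_C x^2}dx$ is primal-feasible when $C<l$; the paper instead proves attainment by compactness and lower semicontinuity and then verifies optimality of $\nu_{\zeta_C}$ directly, writing a competitor as $(1+\phi)\,d\nu_{\zeta_C}$ and using convexity of $t\mapsto t\log t$, which has the advantage of treating the interior and boundary multiplier cases in one computation without any minimax theorem. For $C<l$ your argument is complete in outline.

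The genuine gap is the regime $C\geq l$ (equivalently $\zeta_C=B/2$), and more precisely the first equality of the lemma there: you must show that the supremum over $\{\nu:\int x^2d\nu = C\}$ of $\int L\,d\nu-H(\nu)$ is not \emph{smaller} than $\tfrac{BC}{2}+G(B/2)$. Your justification ("the unconstrained optimum already satisfies $\int x^2d\nu\le C$, so both suprema coincide") and your closing continuity argument in $\zeta\downarrow B/2$ only control the dual value and the relaxed problem over $\{\int x^2d\nu\le C\}$; they produce no admissible measures with second moment exactly $C$, and when $C>l$ the optimizer $\nu_{B/2}$ has second moment $l<C$, so it is not feasible for the equality-constrained problem. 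To close this one needs the approximation that occupies the first half of the paper's proof: given $\nu$ with $\int x^2d\nu\le C$, perturb to $\nu_\eps=(1-\eps)\nu+\eps\gamma_{m_\eps}$ with a unit-variance Gaussian centered at $m_\eps\sim\sqrt{(C-\int x^2d\nu)/\eps}$, so that the second moment is exactly $C$, and check that the value is asymptotically unchanged; this step crucially uses $\lim_{|x|\to\infty}L(x)/x^2=B/2$ from Assumption \ref{AG} (the far-away mass contributes $\approx\tfrac{B}{2}(C-\int x^2d\nu)$ to $\int L\,d\nu$, exactly the compensating term $\tfrac{BC}{2}$), together with convexity/semicontinuity of $H$ to control the entropy of the mixture. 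Your proposal never uses this lower tail behavior of $L$ (only the upper bounds for integrability), so as written the case $C\ge l$ of the stated identity — which is actually used later, e.g. when $\zeta_{\alpha,\theta}=B/2$ on $[\beta_-,\beta_+]$ in the proof of Proposition \ref{diffF} — is unproved.
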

Note that if $l$ is finite, then so is $G(B/2)$ since $G$ is a convex function.
\begin{proof}
Define the function 
$$ \forall \nu \in \mathcal{P}(\R), \ E (\nu)= H(\nu) + \int \Big( \frac{B }{2}x^2  - L(x) \Big)d\nu(x).$$
We first show that 
\begin{equation}\label{tot}\inf_{\nu \in \mathcal{P}(\R) \atop \int x^2 d \nu(x) = C} E(\nu)  = \inf_{\nu \in \mathcal{P}(\R) \atop \int x^2 d \nu(x) \leq C} E(\nu).\end{equation}
Clearly the RHS is bounded above by the LHS.
To prove the equality, we therefore need to show that for any $\nu \in \mathcal{P}(\R) $ such that  $\int x^2 d \nu(x) \leq  C$, there exists $\nu_{\eps}$ such that $\int x^2 d \nu_{\eps}(x) = C$, and 
$$\lim_{\eps \to 0} \nu_{\eps} = \nu,  \ \mbox{ and } \  \lim_{\eps \to 0} E( \nu_{\eps}) \ge E(\nu).$$
 We set $\nu_{\eps} = (1 - \eps) \nu + \eps \gamma_{\eps}$ where $\gamma_\eps$ is a Gaussian measure of variance $1$ and mean $m_{\eps}$, given, if $D=\int x^{2} d\nu(x)$ by
\[ m_{\eps} = \sqrt{ \frac{ C - (1- \eps)D}{\eps} -1 }\,. \]
With this choice of $m_\eps$, one can check that  $\int x^2 d \nu_{\eps}(x) = C$. Moreover,
$ \nu_{\eps}$ converges weakly to $ \nu$ as $\eps$ goes to zero.
 As $\int x^2 d\nu_{\eps}(x) \leq C$, and  $( \frac{B}{2}x^2 -  L(x))/x^{2}$ goes to zero as $x$ goes to infinity, we deduce that 
\begin{equation} \label{conv} \lim_{\eps \to 0^+} \int \Big( \frac{B }{2}x^2 - L(x) \Big) d\nu_{\eps}(x) = \int \Big( \frac{B }{2}x^2 - L(x) \Big)d\nu(x).\end{equation}
Besides, as $H$ is lower semi-continuous,
$$ \liminf_{\eps \to 0^+} H(\nu_\eps) \geq H(\nu).$$We  conclude together with \eqref{conv} that,
$ \lim_{\eps \to 0^+} E(\nu_\eps) \ge  E(\nu),$
which ends the proof of the claim \eqref{tot}. 
Observe that $E$ is a lower semi-continuous function for the weak topology since $H$ is lower semi-continuous and $x\mapsto Bx^2/2 - L(x)$ is non-negative and continuous. Moreover, the set 
$$ \big\{ \nu \in\mathcal{P}(\R) : \int x^2 d\nu(x) \leq C\big\},$$
is a compact set. Thus, the supremum of $E$ over the set above is achieved. We will identify the maximizer.
For any $\zeta \in [B/2, + \infty)$ such that $G(\zeta) < +\infty$,  we let  $\nu_\zeta$  be  the probability measure given by
\[ d\nu_{\zeta} = \frac{\exp( L(x) - \zeta x^2)}{\int \exp( L(y) - \zeta y^2)dy} dx. \]
We will show that 
\begin{equation} \label{min}  \inf_{\nu \in \mathcal{P}(\R) \atop \int x^2 d \nu(x) \leq C} E(\nu) = E(\nu_{\zeta_C}).\end{equation} 
Let $\mu$ be a probability measure such that $H( \mu ) < + \infty$ and $\int x^2 d \mu(x)  \leq C$. As $H(\mu) <+\infty$, we can write,
$$ \mu = (1+\phi) d\nu_{\zeta_C},$$
where $\phi$ is some measurable function such that $\phi \geq -1$ $\nu_{\zeta_C}$-a.s. and $\int \phi d\nu_{\zeta_C} =0$. We have
\begin{align*}
E(\mu) &= E(\nu_{\zeta_C}) + \int \Big( \frac{B}{2}x^2 -L(x)\Big) \phi(x) d\nu_{\zeta_C}(x) \\
& +\int (1+\phi) \log(1+\phi) d\nu_{\zeta_C}   +\int \log \frac{d\nu_{\zeta_C}}{dx} \phi d\nu_{\zeta_C}.
\end{align*}
 By convexity of $x\mapsto x\log x$, we know that
$$ \int  (1+\phi)\log(1+\phi) d\nu_{\zeta_C}  \geq  (1+\int \phi d \nu_{\zeta_C})\log (1+\int \phi d \nu_{\zeta_C})  =0.$$ 
Therefore, using again that $\int \phi  d\nu_{\zeta_C}=0$ to cancel the contribution from the partition function of $\nu_{\zeta_{C}}$, we get
\begin{eqnarray*}
&&E(\mu)-  E(\nu_{\zeta_C})\geq  \int\left( \frac{B}{2}x^2 -L(x)+\log \frac{d\nu_{\zeta_C}}{dx}  \right) \phi(x) d\nu_{\zeta_C}(x) \\
&&\qquad  =\Big( \frac{B}{2} -\zeta_C\Big) \int x^2 \phi(x) d\nu_{\zeta_C}(x) =  \Big( \frac{B}{2} -\zeta_C\Big) \Big( \int x^2 d\mu(x) - \int x^2 d\nu_{\zeta_C}(x)\Big).
\end{eqnarray*}
If $ C<l$, then $\int x^2 d\nu_{\zeta_C}(x) = C$. Since $\zeta_C \leq B/2$ and $\int x^2 d\mu(x) \leq C$ so that the RHS is non negative.
If $C\geq l$, then $\zeta_C = B/2$, and we also get $E(\mu) \geq  E(\nu_{\zeta_C})$.
This shows that $\nu_{\zeta_C}$ achieves the infimum in \eqref{min}, and ends the proof of Lemma \ref{lemmin}.
\end{proof}

\subsection{Differentiability of the limit of the annealed spherical integral}
This section is devoted to the proof of the following proposition. 

\begin{proposition}\label{diffF}
$F$ is continuously differentiable on $(1/\sqrt{B-1},+\infty)$ except possibly at the point $\theta_0$ such that 
$$ \theta_0 = \inf \big\{ \theta : F(\theta) > \theta^2 \big\}.$$  
 Moreover, for any $\theta \leq 1/2\sqrt{B-1}$,
\begin{equation}\label{theta2}
F(\theta) = \theta^2.\end{equation}
\end{proposition}

The second part of the claim of the above proposition \eqref{theta2} is due to Proposition \ref{F0} and the fact that $A=B$. From now on, we assume that $\theta^2(B-1) > 1$ and wish to prove the first part of Proposition \ref{diffF}. 
We define for any $\alpha\in[0,1]$, and $\nu\in \mathcal{P}(\R)$,
\begin{equation} \label{defHtheta} H_{\theta}(\alpha,\nu)=
\theta^2\alpha^2+ \theta^2B(1-\alpha)^2+
 \int L(2\theta \sqrt{1-\alpha} x) d\nu(x)- H(\nu)- \frac{1}{2} \log (2\pi) - \frac{1}{2}.
\end{equation}
By Proposition \ref{simplif}, we have
\begin{equation} \label{defFsimpl} F(\theta) = \sup_{(\alpha,\nu) \in S} H_\theta(\alpha,\nu),\end{equation}
where 
$S = \{ (\alpha,\nu) \in [0,1] \times \mathcal{P}(\R) :  \ \int x^2 d\nu(x) = \alpha\}.$
We first show  that we can can restrict the parameter $\alpha$ to the set $[0,\frac{1}{2}]\cup\{1\}$, as described in the following lemma. \begin{lemma}\label{bound} If $\theta^2 (B-1) \geq 1$, then
$$ F(\theta) = \max\Big(\sup_{(\alpha,\nu) \in S \atop  \alpha \leq \frac{1}{2} } H_\theta(\alpha,\nu), \theta^2\Big).$$
\end{lemma}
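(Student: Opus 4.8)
The plan is to show that the maximization over $\alpha$ in \eqref{defFsimpl} can be restricted to $[0,\tfrac12]\cup\{1\}$, by proving that the choice $\alpha=1$ always dominates any $\alpha\in(\tfrac12,1)$ when $\theta^2(B-1)\ge 1$. First I would record what $H_\theta$ becomes at $\alpha=1$: since $\nu$ must then have $\int x^2\,d\nu=1$ and $L(0)=0$, the term $\int L(2\theta\sqrt{1-\alpha}x)\,d\nu(x)$ vanishes, so
\begin{equation*}
\sup_{(1,\nu)\in S} H_\theta(1,\nu) = \theta^2 - \inf_{\int x^2 d\nu=1} H(\nu) - \tfrac12\log(2\pi) - \tfrac12 = \theta^2,
\end{equation*}
using that the infimum of $H(\nu)$ under a unit second-moment constraint is achieved by the standard Gaussian and equals $-\tfrac12(1+\log 2\pi)$ (this is exactly the computation already invoked in the proof of Lemma \ref{F0}). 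So the claim reduces to showing that for every $\alpha\in(\tfrac12,1)$ and every admissible $\nu$, $H_\theta(\alpha,\nu)\le \theta^2$, i.e. that nothing is gained in the ``intermediate'' range of $\alpha$.

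The key step is to bound $\int L(2\theta\sqrt{1-\alpha}\,x)\,d\nu(x)$ from above using the sub-Gaussian bound $L(x)\le \tfrac{A}{2}x^2 = \tfrac{B}{2}x^2$ (recall $A=B$ in this section), which gives $\int L(2\theta\sqrt{1-\alpha}x)\,d\nu \le 2\theta^2 B(1-\alpha)\int x^2 d\nu = 2\theta^2 B(1-\alpha)\alpha$. Combined with $-H(\nu)\le \tfrac12\log(2\pi\alpha)+\tfrac12$ (Gaussian maximizes entropy at fixed second moment $\alpha$, as used throughout), we get
\begin{align*}
H_\theta(\alpha,\nu) &\le \theta^2\alpha^2 + \theta^2 B(1-\alpha)^2 + 2\theta^2 B(1-\alpha)\alpha + \tfrac12\log\alpha \\
&= \theta^2\alpha^2 + \theta^2 B(1-\alpha^2) + \tfrac12\log\alpha = \theta^2\big(B - (B-1)\alpha^2\big) + \tfrac12\log\alpha.
\end{align*}
So I would then study the function $\varphi(\alpha) := \theta^2(B-(B-1)\alpha^2) + \tfrac12\log\alpha$ on $(0,1]$. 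Its derivative is $\varphi'(\alpha) = -2\theta^2(B-1)\alpha + \tfrac{1}{2\alpha}$, which is nonnegative precisely when $4\theta^2(B-1)\alpha^2\le 1$; under the hypothesis $\theta^2(B-1)\ge 1$ this holds only for $\alpha\le \tfrac12$, so $\varphi$ is decreasing on $[\tfrac12,1]$ and hence $\varphi(\alpha)\le\varphi(1)=\theta^2 B - \theta^2(B-1) = \theta^2$ for all $\alpha\in[\tfrac12,1]$. This shows $H_\theta(\alpha,\nu)\le\theta^2$ for $\alpha\in(\tfrac12,1)$, which is dominated by the $\alpha=1$ contribution, and therefore the supremum defining $F(\theta)$ is attained either at some $\alpha\le\tfrac12$ or at $\alpha=1$, giving the stated identity.

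The only mildly delicate point — the ``main obstacle'' such as it is — is making sure the entropy bound $-H(\nu)\le\tfrac12\log(2\pi\alpha)+\tfrac12$ and the bound $L\le\tfrac B2 x^2$ are applied in a way consistent with the constraint $(\alpha,\nu)\in S$ (finite $H(\nu)$, exact second moment $\alpha$); these are the same estimates already deployed in the proof of Lemma \ref{F0}, so I would simply cite that argument. One should also note the hypothesis is used sharply: if $\theta^2(B-1)<1$ the function $\varphi$ need not be decreasing past $\tfrac12$, which is consistent with the fact (Proposition \ref{diffF}) that $F(\theta)=\theta^2$ in that regime anyway, so restricting to $\alpha\le\tfrac12$ would not lose the maximizer — but for the statement of Lemma \ref{bound} as written we only need the case $\theta^2(B-1)\ge1$.
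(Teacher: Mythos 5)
There is a genuine gap at the final step. You correctly reduce to the bound $H_\theta(\alpha,\nu)\le \varphi(\alpha):=\theta^2\big(B-(B-1)\alpha^2\big)+\tfrac12\log\alpha$ and correctly check that $\varphi'(\alpha)=-2\theta^2(B-1)\alpha+\tfrac{1}{2\alpha}\le 0$ on $[\tfrac12,1]$ when $\theta^2(B-1)\ge 1$, but then the monotonicity is used in the wrong direction: $\varphi$ decreasing on $[\tfrac12,1]$ gives $\varphi(\alpha)\ge\varphi(1)=\theta^2$ for $\alpha\in[\tfrac12,1]$, not $\varphi(\alpha)\le\theta^2$. In fact $\varphi(\tfrac12)=\theta^2\tfrac{3B+1}{4}-\tfrac12\log 2>\theta^2$ under your hypothesis, so the inequality you need cannot come out of this bound. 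The deeper problem is that your strategy is stronger than what the lemma asserts: you try to show that every $\alpha\in(\tfrac12,1)$ yields at most $\theta^2$, whereas the lemma only requires that the intermediate range be dominated by the two endpoints $\alpha=\tfrac12$ (which lies in the allowed region) and $\alpha=1$ (which gives $\theta^2$). Replacing $\int L(2\theta\sqrt{1-\alpha}\,x)\,d\nu$ by the crude bound $\tfrac B2(2\theta\sqrt{1-\alpha})^2\alpha$ destroys exactly the feature that makes an endpoint comparison possible, since the bound is not attained at $\alpha=\tfrac12$ and so controls nothing in terms of the true value $\sup_\nu H_\theta(\tfrac12,\nu)$.

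For comparison, the paper's proof first rescales $\nu$ to unit second moment, so the integral term becomes $\int L(2\theta\sqrt{\alpha(1-\alpha)}\,x)\,d\nu$, and then uses the standing assumption of that section that $\psi$ is increasing to bound it by $2\theta^2\alpha(1-\alpha)\,m$ with $m=2\int x^2\psi(\theta x)\,d\nu$; the key point is that this majorant $f_{\theta,m}(\alpha)$ coincides with $\tilde H_\theta(\alpha,\nu)$ at both endpoints $\alpha=\tfrac12$ and $\alpha=1$. A second-derivative analysis of $f_{\theta,m}$ (using $f_{\theta,m}'(\tfrac12)=\theta^2(1-B)+1\le 0$) shows $f_{\theta,m}$ is decreasing or decreasing-then-increasing on $[\tfrac12,1]$, hence maximized at an endpoint, which is exactly the comparison needed; note also that your argument never uses the monotonicity of $\psi$, which is what the paper's proof relies on. To repair your proof you would need either this endpoint-exact majorization or some other argument showing domination by $\max\big(\sup_{\alpha\le 1/2}\sup_\nu H_\theta,\ \theta^2\big)$ rather than by $\theta^2$ alone.
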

\begin{proof} Up to replace $\nu$ by $h_{{\alpha}}\#\nu$, {where $h_\alpha : x \mapsto x /\sqrt{\alpha}$,} $$ \sup_{(\alpha,\nu) \in S} H_\theta(\alpha,\nu) = \sup_{ \alpha \in (0,1] \atop \int x^2 d\nu =1} \{ \tilde H_\theta(\alpha,\nu)-H(\nu)\},$$
where for any $\alpha \in (0,1]$, and $\nu \in \mathcal{P}(\R)$, 
$$ \tilde H_\theta(\alpha,\nu) = \theta^2\alpha^2+ \theta^2B(1-\alpha)^2+\frac{1}{2} \log \alpha+
 \int L(2 \theta \sqrt{(1-\alpha)\alpha } x) d\nu(x)- \frac{1}{2} \log (2\pi) - \frac{1}{2}.$$
We claim that for any $\nu \in \mathcal{P}(\R)$ such that $\int x^2 d\nu(x) =1$,
\begin{equation} \label{majH}  \max_{\alpha\in [1/2,1]}\tilde H_\theta(\alpha,\nu)\leq  \max\Big( \tilde H_\theta\Big(\frac{1}{2},\nu\Big), \tilde H_\theta(1,\nu)\Big).\end{equation}
Indeed, first notice that since $\psi$ is increasing, for all $\alpha\in [0,1]$ we have,
\begin{eqnarray*}
  \int L(2 \theta \sqrt{(1-\alpha)\alpha } x) d\nu(x) &=& 4\theta^2 {\alpha(1-\alpha)} \int x^2 \psi(2\theta\sqrt{\alpha(1-\alpha)} x) d\nu(x)\\
  & \le &4\theta^2  {\alpha(1-\alpha)}\int x^2 \psi(\theta x) d\nu(x)\,.\end{eqnarray*}
Denote by $m =2 \int x^2 \psi(\theta x) d\nu(x)\in [1,B]$. For any $\alpha \in (0,1]$,
$$ \tilde H_\theta(\alpha,\nu) \leq \theta^2\alpha^2+ \theta^2B(1-\alpha)^2+ \frac{1}{2} \log \alpha+
 2\theta^2 \alpha(1-\alpha) m- \frac{1}{2} \log (2\pi) - \frac{1}{2}=:f_{\theta,m}(\alpha)\,.$$ 
 We find that 
$$ f_{\theta,m}'\Big( \frac{1}{2}\Big) = \theta^2(1 -B)+1,\quad f_{\theta,m}''(\alpha) = 2\theta^2(B+1-2m)-\frac{1}{2\alpha^2}.$${
Since $f_{\theta,m}''$ is increasing and $f_{\theta,m}''(0)=-\infty$, we deduce that $f'_{\theta,m}$ is either   decreasing or decreasing and then increasing.  Since $f_{\theta,m}'(1/2) \leq 0$, we conclude that $f_{\theta,m}$ is either decreasing or decreasing and then increasing on $[1/2,1]$. Therefore, 
$$ \max_{ \alpha \in [\frac{1}{2},1]} f_{\theta,m}(\alpha) = \max\Big( f_{\theta,m}\Big( \frac{1}{2}\Big), f_{\theta,m}(1)\Big)\,,$$
which yields the claim  \eqref{majH} since $f_{\theta,m}(\alpha)= \tilde H_\theta(\alpha,\nu)$ at the two points $\alpha=1/2$ and $1$. To conclude the proof we observe that since $\tilde H_\theta(1,\nu) = \theta^2$ for any $\nu\in\mathcal{P}(\R)$, we have
$$ \sup_{ \int x^2 d \nu(x) = 1} \{\tilde H_\theta (1,\nu) -H(\nu)\}= \theta^2 +\frac{1}{2} + \frac{1}{2} \log (2\pi)-\inf_{ \int x^2 d \nu(x) = 1} H(\nu)=\theta^2 .$$}
\end{proof}
Due to Lemma \ref{lemmin}, we can further simplify the optimization problem defining $F(\theta)$ in \eqref{defFsimpl} by optimizing on $\nu \in\mathcal{P}(\R)$ such that $\int x^2 d\nu(x) = \alpha$, given $\alpha \in(0,1)$.
\begin{corollary}\label{corsimplif}
Let $R$ be the function
$$ R : C \in (0,+\infty) \mapsto C \zeta_C + G(\zeta_C),$$
where $\zeta_C$ is defined as in Lemma \ref{lemmin}. 
Denote for any $\alpha\in(0,1)$,
$$ K_\theta(\alpha) = \theta^{2}(\alpha^{2}+B(1-\alpha)^{2})+R(4\theta^2 \alpha(1-\alpha))-\frac{1}{2}\log (1-\alpha)-\log (2\theta)
- \frac{1}{2} \log (2\pi) - \frac{1}{2},$$
and $K_\theta(1) =\theta^2 $. Then, for any $\theta \geq 1/\sqrt{B-1}$,
$$F(\theta)=
\sup_{\alpha\in (0,1]} K_\theta(\alpha)\,.
$$
\end{corollary}
\begin{proof}
When $\alpha<1$, we make the following change of variables which consists in replacing $\nu$ by its pushforward by $x\mapsto  2\theta\sqrt{1-\alpha}x$. Using \eqref{pushforw}, we find that
$$H(\nu)=\int\log \frac{d\nu}{dx} d\nu=H(\nu_1)-\frac{1}{2}\log (1-\alpha)-\log (2\theta)$$
and $\int x^2 d\nu(x) =4\alpha(1-\alpha)\theta^2$.
Thus,
\begin{equation} \label{optimF} F(\theta) = \max\Big( \theta^2, \sup_{ (\alpha,\nu) \in S'} K_\theta(\alpha,\nu)\Big),\end{equation}
where
$ S' = \big\{ (\alpha,\nu) \in (0,1)\times \mathcal{P}(\R) :  \int x^2 d\nu(x) = 4\alpha (1-\alpha) \theta^2 \big\},$
and  for any $ \alpha \in (0,1), \nu \in \mathcal{P}(\R)$,
\begin{align*}  K_\theta(\alpha, \nu)&=\theta^{2}(\alpha^{2}+B(1-\alpha)^{2})+\int L(x) d\nu(x)
-H(\nu) -\frac{1}{2}\log (1-\alpha)-\log (2\theta)- \frac{1}{2} \log (2\pi e) .
\end{align*}
By Lemma \ref{lemmin}, we obtain for any $\alpha \in (0,1)$,
\begin{align*} \sup_{\int x^2 d\nu(x) = 4\alpha(1-\alpha)\theta^2} K_\theta(\alpha,\nu) &= \theta^{2}(\alpha^{2}+B(1-\alpha)^{2})+4\theta^2\alpha (1-\alpha) \zeta_{\alpha,\theta}+ G(\zeta_{\alpha,\theta}) \\
&-\frac{1}{2}\log (1-\alpha)-\log (2\theta)- \frac{1}{2} \log (2\pi) - \frac{1}{2},
\end{align*}
where $\zeta_{\alpha,\theta} = \zeta_{4\theta^2\alpha(1-\alpha)}$. Hence, if we set,  for $\alpha \in (0,1)$,
$$ K_\theta(\alpha) = \theta^{2}(\alpha^{2}+B(1-\alpha)^{2})+R(4\theta^2 \alpha(1-\alpha)) -\frac{1}{2}\log (1-\alpha)-\log (2\theta)
- \frac{1}{2} \log (2\pi) - \frac{1}{2},$$
we deduce from \eqref{optimF}  that 
\begin{equation} \label{maxF}  F(\theta) = \max\Big( \theta^2, \sup_{\alpha \in (0,1)} K_\theta(\alpha)\Big).\end{equation}
To study the supremum of $K_\theta$, we will need the following result on the limit of $R$ at $0$, which will allow us to compute the limit of $K_\theta$ at $1$.
\begin{lemma}\label{behavR}When $C \to 0^+$,
$$ R(C)= \frac{1}{2} + \frac{1}{2}\log (2\pi C) + o(1).$$
\end{lemma}
\begin{proof}
For $C<l$, we have
\begin{equation} \label{defzeta} G'(\zeta_C) = -C.\end{equation}
Since $G'(C)$ goes to zero when $C$ goes to infinity,
and $G'$ is invertible as $G''(x)>0$ on $(-\infty,l)$, we find that
$$ \lim_{C\to0} \zeta_C = +\infty.$$
From the inequalities, $L(x)/x^{2} \in [0,B/2]$, we deduce that by  \eqref{defG} of $G$  we have the bounds
$$ \frac{1}{2} \log \frac{\pi}{\zeta} \leq G(\zeta) \leq \frac{1}{2} \log \frac{\pi}{\zeta-\frac{B}{2}},$$
which implies that
\begin{equation} \label{ineqG} G(\zeta)\sim_{+\infty} \frac{1}{2} \log \frac{\pi}{\zeta }.\end{equation}
On the other hand, inserting the bound $L(x)/x^{2} \in [0,B/2]$ and \eqref{ineqG}  in the numerator and the denominator of the derivative, we obtain
$$ \sqrt{\frac{\zeta - \frac{B}{2}}{\zeta}}\frac{1}{2\zeta} \leq -G'(\zeta) \leq\sqrt{ \frac{\zeta}{\zeta-\frac{B}{2}}} \frac{1}{2(\zeta-\frac{B}{2})}.$$
We deduce,  since $\zeta_C \to +\infty$ as $C\to 0$, that
$$ G'(\zeta_C) = -\frac{1}{2\zeta_C} + o\Big( \frac{1}{\zeta_C}\Big).$$ 
Therefore, we get from the definition of $\zeta_C$  \eqref{defzeta} that
$ \zeta_C$ is equivalent to $ \frac{1}{2C}$ when $C$ goes to zero.  
Using \eqref{ineqG}, we can conclude that 
$$R(C) = \frac{1}{2} + o(1) + \frac{1}{2} \log \frac{\pi}{ \frac{1}{2C} + o\Big( \frac{1}{2C}\Big)} = \frac{1}{2} + \frac{1}{2} \log (2\pi C) +o(1).$$
\end{proof}
From  Lemma \ref{behavR}, we deduce that 
\begin{equation}\label{Keq} \lim_{\alpha \to 1} K_\theta(\alpha ) = \theta^2 ,\end{equation}
so that we can continuously extend $K_\theta$ to $1$. Therefore, \eqref{optimF} gives
$$F(\theta) = \sup_{\alpha \in (0,1]} K_\theta(\alpha).$$
\end{proof}
We now study  $K_\theta$  and show that it is continuously differentiable on $(0,1)$. This amounts to prove that $R$ is continuously differentiable on $(0,1)$. 
On $(0,l)$, it is clear that $R$ is continuously differentiable due to the implicit function theorem. Indeed, $\zeta_C$ is by definition the unique solution of the equation
$$ G'(\zeta) = -C,$$
and $G$ is strictly convex. On $(l,+\infty)$, $R(C)=B C/2+G(B/2)$ is an affine function, therefore it is sufficient to prove that 
\begin{equation} \label{contdiff} \lim_{C \to l^-} R'(C) = \frac{B}{2}.\end{equation}
This is a consequence of the fact that for any $C <l$,
$$ R'(C) = C \partial \zeta_C + \zeta_C + \partial \zeta_C G'(\zeta_C)\\ 
 = \zeta_C,$$
which gives \eqref{contdiff}. We deduce that $K_\theta$ is continuously differentiable on $(0,1)$ and 
$$\forall \alpha \in (0,1), \  K_{\theta}'(\alpha) = 2\theta^2 (\alpha +B(\alpha-1))+4\theta^2 \zeta_{\alpha,\theta} (1-2\alpha) +\frac{1}{2(1-\alpha)}.$$
From Lemma \ref{behavR}, we know that $K_{\theta}$ goes to $-\infty$ when $\alpha$ goes to zero so that the
 supremum of $K_\theta$ on $(0,1]$ is achieved either at $1$ or on $(0,1)$.  From Lemma \ref{bound}, we find for $\theta^{2}(B-1)\ge 1$,
$$ F(\theta) = \max\big( K_\theta(1),\sup_{\alpha\leq \frac{1}{2} } K_\theta(\alpha) \big).$$
Let us assume that the maximum of $K_\theta$ is achieved on $(0,1)$. We deduce that the maximum of $K_\theta$ is achieved on $(0,\frac{1}{2}]$ at a critical point since $K_\theta$ is differentiable.
 The critical points $\alpha$ of $K_\theta$ satisfy the equation,
\begin{equation}\label{supcrit}
2\theta^2 (\alpha +B(\alpha-1))+4\theta^2 \zeta_{\alpha,\theta} (1-2\alpha) +\frac{1}{2(1-\alpha)}=0,
\end{equation}
As $\theta^2(B-1) >1$, $1/2$ does not satisfy the above equation so that the critical points of $K_\theta$ are the $\alpha \neq 1/2$, such that   
\begin{equation}\label{eqzeta}\zeta_{\alpha,\theta} =\frac{ 2\theta^2 (\alpha +B(\alpha-1))+\frac{1}{2(1-\alpha)}}{4\theta^2(2\alpha-1)}:=\varphi(\alpha).\end{equation}
We find that 
$$ \phi(\alpha)\geq \frac{B}{2} \Longleftrightarrow  \begin{cases}
P(\alpha) \geq 0 & \text{if } \alpha \geq \frac{1}{2}\\
P(\alpha) \leq 0 & \text{if } \alpha \leq \frac{1}{2},
\end{cases}$$
with $P(\alpha) = 4\theta^2(B-1) \alpha^2 - 4\theta^2 (B-1) \alpha +1$. As $\zeta_{\alpha,\theta} \geq B/2$, we obtain that the maximum of $K_\theta$ is achieved  at $\alpha \in (0,1/2)$ such that $P(\alpha) \leq 0$. The roots of $P$ are 
\begin{equation} \label{defalpha}\alpha_{\pm} = \frac{ 1 \pm \sqrt{ 1 - [\theta^2 (B-1)]^{-1}}}{2}.\end{equation}
Thus, the maximum of $K_\theta$ is achieved on $[\alpha_-,1/2]$.
We will show that $K_\theta$ is strictly concave on $(0,\frac{1}{2})$. Note that,
$$ 4 \theta^2 \alpha(1-\alpha ) \geq l \Longleftrightarrow \alpha \in [\beta_-, \beta_+],$$
with 
$$ \beta_\pm = \frac{1 \pm \sqrt{1-l\theta^{-2} }}{2}.$$ 
For any $\alpha \in (\beta_-,\frac{1}{2})$, we must have $\zeta_{ \alpha,\theta}=B/2$ and therefore 
$$ K_\theta(\alpha) = \theta^{2}(\alpha^{2}+B(1-\alpha)^{2})+2B\theta^2\alpha (1-\alpha) + G\Big(\frac{B}{2}\Big)\\
-\frac{1}{2}\log (1-\alpha)+C_\theta,$$
where $C_\theta$ is some constant depending on $\theta$. Thus, for $\alpha \in (\beta_-,\frac{1}{2})$,
$$K_\theta''(\alpha) = 2\theta^2(1-B) +\frac{1}{2(1-\alpha)^2}< 2\theta^2(1-B) + 2 <0.$$
For any $\alpha \in (0,\beta_-)$, we have
\begin{align*} K_\theta(\alpha) &= \theta^{2}(\alpha^{2}+B(1-\alpha)^{2})+4\theta^2\alpha (1-\alpha) \zeta_{\alpha,\theta} + G(\zeta_{\alpha,\theta})\\
&-\frac{1}{2}\log (1-\alpha)+C_\theta,
\end{align*}
where $\zeta_{\alpha,\theta}$ is such that 
$$ G'(\zeta_{\alpha,\theta}) = -4\theta^2 \alpha(1-\alpha).$$ 
As $G$ is strictly convex, we deduce by the implicit function theorem that $\alpha \in (0,\beta_-) \mapsto \zeta_{\alpha,\theta}$ is differentiable, and we have
$$ \partial_{\alpha} \zeta_{\alpha,\theta} G''(\zeta_{\alpha,\theta}) =-4\theta^2(1-2\alpha).$$
We deduce that $\partial_{\alpha} \zeta_{\alpha,\theta} < 0$,
for any $\alpha \in (0,\beta_-)$. Therefore, for $\alpha \in (0,\beta_-)$, we obtain
$$K''_\theta(\alpha) = 2\theta^2(B+1) - 8\theta^2\zeta_{\alpha,\theta} +4\theta^2 \partial_\alpha \zeta_{\alpha,\theta}(1-2\alpha) + \frac{1}{2(1-\alpha)^2}.$$
Using that $\zeta_{\alpha,\theta} > B/2$ and that $\partial_\alpha \zeta_{\alpha,\theta} < 0$ for $\alpha \in (0,\beta_-)$, we find that 
\begin{align*}
\forall \alpha \in (0,\beta_-), \ K''_\theta(\alpha)& \leq 2\theta^2(B+1) - 4\theta^2 B +\frac{1}{2(1-\alpha)^2},2\theta^2(1-B) + 2 \leq 0.
\end{align*}
Thus, $K_\theta'$ is decreasing on $(0,\beta_-)$ and $(\beta_-,\frac{1}{2})$. Since $K_\theta'$ is continuous, we deduce that $K_\theta'$ is decreasing on $(0,\frac{1}{2})$ and $K_\theta$ is strictly concave on $(0,\frac{1}{2})$. Therefore, the maximum is achieved at the unique critical point of $K_\theta$ on $(0,\frac{1}{2})$ which we denote by  $\alpha_\theta$. 
 We distinguish two cases.
\subsubsection*{$1^{\text{st}}$ case:  $l\leq \frac{1}{B-1}$.} We  then have
$ \beta_- \leq \alpha_- \leq \alpha_+\leq \beta_+.$
We know that on one hand $P(\alpha_-) = 0$, so that 
$ \phi(\alpha_-) = \frac{B}{2}.$
On the other hand $\zeta_{\alpha_-,\theta} = B/2$ since $\alpha_- \in [\beta_-,\beta_+]$. We deduce by \eqref{eqzeta} that $\alpha_-$ is a critical point of $K_\theta$ which lies in $(0,\frac{1}{2})$. Therefore 
$ \alpha_\theta = \alpha_-.$
\subsubsection*{$2^{\text{nd}}$ case:  $l>\frac{1}{B-1}$.} We have,
$ \alpha_- < \beta_- < \beta_+ < \alpha_+.$
Note that $0 \leq \alpha_- < \frac{1}{2} < \alpha_+\leq 1$. Since $\phi(\alpha) \neq B/2$ for any $\alpha \in [\beta_-,\beta_+]^c$, we deduce that $\alpha_\theta \in [\alpha_-,\beta_-)$, and in particular $K_\theta''(\alpha_\theta) <0$.  We deduce by the implicit function theorem that $\theta \mapsto \alpha_\theta$ is $C^1$, and therefore $\theta \mapsto K_\theta(\alpha_\theta)$ is continuously differentiable on $(1/\sqrt{B-1},+\infty)$.

In conclusion, we have shown that for any $\theta^2(B-1) \geq1$, if $l\leq \frac{1}{B-1}$,
$$ F(\theta) = \max\big( \theta^2, K_\theta(\alpha_-)\big),$$
where $\alpha_-$ is defined in \eqref{defalpha}, whereas if $l\geq \frac{1}{B-1}$,
$$ F(\theta) = \max\big(\theta^2, K_\theta(\alpha_\theta)\big),$$
where $\alpha_\theta$ is the unique solution in $(0, \beta_-)$ such that 
$ G'(\zeta_{\alpha,\theta}) = -4\theta^2 \alpha(1-\alpha).$

To conclude that $F$ is continuously differentiable on $( 1/ \sqrt{B-1},+\infty)$ except at most at one point, we show that there exists $\theta_0$ such that 
$$ \forall \theta \leq \theta_0, \ F(\theta) = \theta^2, \text{ and } \forall \theta >\theta_0, \ F(\theta) > \theta^2.$$
Since $F(\theta) \geq \theta^2$ for any $\theta \geq 0$, it suffices to prove that $\theta \mapsto F(\theta) - \theta^2$ is non-decreasing. Recall that
$$ F(\theta) = \lim_{N\to +\infty} F_N(\theta),$$
where 
$$ F_N(\theta) = \frac{1}{N} \log \E_e \exp\Big( \sum_i L\big( \sqrt{2N} \theta e_i^2\big) + \sum_{i<j} L\big( 2\sqrt{N}\theta e_ie_j\big)\Big),$$
and $e$ is uniformly sampled on $\mathbb{S}^{N-1}$. Therefore,
$$ F_N(\theta)- \theta^2 = \frac{1}{N} \log \E_e \exp\Big( \sum_i 2N\theta^2 \Big(\psi\big( \sqrt{2N} \theta e_i^2\big)-\frac{1}{2}\Big)e_i^4  + \sum_{i<j} 4N \theta^2 \Big(\psi\big( 2\sqrt{N}\theta e_ie_j\big)-\frac{1}{2}\Big)e_i^2 e_j^2\Big).$$
As $\psi$ is increasing and $\psi(0)=1/2$,  $\theta \mapsto F_N(\theta) -\theta^2$ is non-decreasing, and therefore $\theta \mapsto F(\theta)-\theta^2$ is non-decreasing as well.

%
For the sake of completeness, we show the following Proposition which indicates that it is unlikely we could prove the large deviation principle for all values of $x$ by following our strategy because $F$ is in general not differentiable everywhere.
\begin{proposition}
Assume $\theta_0 = \inf \{\theta \in \R^+: F(\theta) > \theta^2 \} >1/ \sqrt{B-1}$.  Then,  $F$ is not differentiable at $\theta_0$. 
\end{proposition}
\begin{proof}
Let $\theta >\theta_0$.
Lemma \ref{bound} shows that with $H_{\theta}$ defined in \eqref{defHtheta} , we have
$$ F(\theta) = \max_{ \int x^2 d\nu(x)\leq \alpha \atop \alpha \leq \frac{1}{2}} H_\theta(\alpha,\nu).$$
Since $\theta_0 \geq 1/\sqrt{B-1}$, we know from the proof of Proposition \ref{diffF} that there exists $\alpha_0 \leq 1/2$ and $\nu_0 \in \mathcal{P}(\R)$ such that
$$ H_{\theta_0}(\alpha_0,\nu_0) = F(\theta_0).$$
Define $g(\theta) = H_\theta(\alpha_0,\nu_0)$ for any $\theta \geq \theta_0$. Let  $F'_+$ denote the right derivative of $F$. We have as $F \geq g$ and $F(\theta_0) = g(\theta_0)$,
$$ F'_+(\theta_0) \geq g'(\theta_0).$$
We find
$$g'(\theta_0)= 2 \theta_0(\alpha^2_{0} + B(1- \alpha_{0})^2) + 2 \sqrt{1-\alpha_0}\int x L' (2\theta_0 \sqrt{1-\alpha_{0}} x) d\nu_{0}(x).$$
Since $\psi$ is increasing, $xL'(x) \geq 2L(x)$, and $L(x) \geq x^2/2$ for any $x\geq 0$. Therefore, $xL'(x) \geq x^2$ and we deduce
\begin{align*} g'(\theta_0)& \geq  2 \theta_0(\alpha^2_{0} + B(1- \alpha_{0})^2) +  4\theta_0 \alpha_0 (1-\alpha_0)\\
& \geq 2\theta_0 + 2\theta_0 (1-\alpha)^2 (B-1).
\end{align*}
This shows that $g'(\theta_0)>2\theta_0$ and therefore $F'_+(\theta) >2\theta_0$. It yields that $F$ is not differentiable at $\theta_0$.

\end{proof}

\subsection{Proof of Proposition \ref{increasingprop}}
By Proposition \ref{diffF}, we know that $F$ is differentiable on $(1/\sqrt{B-1},+\infty)$ except possibly at $\theta_0$. Using Proposition \ref{lb} we deduce that there exists $x_\mu$  finite such that the lower large deviation lower bound holds with rate function $I(x) = \overline{I}(x)$ for any $x \geq x_\mu$.

\section{The case $B < A$}\label{seccompact}
We consider in this section the case where the  following assumption holds.
\begin{assum}\label{As-compact}
 $B$ exists and is strictly smaller than $A$. Moreover, we assume that $\psi$ achieves its  maximum $A$ at a unique point $m_{*}$ such that $\psi''(m_{*}) <0$.
 \end{assum}
  The first condition  includes in particular the case where the law of the entries have a compact support (since in this case $B=0$) and we believe the second condition is true quite generically, as we check in the following example.
\begin{example}
Let 
$$\mu=\frac{p}{2}(\delta_{-1/\sqrt{p}}+\delta_{1/\sqrt{p}})+(1-p)\delta_{0},\qquad \psi(x)=\frac{1}{x^{2}}\log( p(\cosh(\frac{x}{\sqrt{p}})-1)+1)\,.$$
Then,   for $p<1/3$, $\mu$ satisfies Assumption \ref{As-compact} (but for $p>1/3$ $\mu$ has a sharp sub-Gaussian tail).
Indeed, we have
$$ \forall x\geq 0, \  \psi'(x) = \frac{L'(x)}{x^2} - \frac{2L(x)}{x^3},  \ \psi''(x) = \frac{L''(x)}{x^2} - \frac{4L'(x)}{x^3}+\frac{6L(x)}{x^4}.$$
We claim that $h : x\mapsto xL'(x) - 2L(x)$ is increasing and then decreasing on $\R_+$. Indeed, 
$$ \forall x \geq 0, \  h'(x) = xL''(x) -L'(x), \ h''(x) = xL^{(3)}(x),$$
and we have,
$$ L(x) = \log\big( p\cosh\Big( \frac{x}{\sqrt{p}}\Big) +1-p\big), \ L'(x) = \frac{\sqrt{p} \sinh\big( \frac{x}{\sqrt{p}}\big) }{p\cosh\big( \frac{x}{\sqrt{p}}\big) +1-p}.$$
$$ L''(x) = \frac{p+(1-p)\cosh\big( \frac{x}{\sqrt{p}}\big)}{(p\cosh\big(\frac{x}{\sqrt{p}}\big) + 1-p)^2}, \ L^{(3)}(x) = \frac{\frac{(1-p)^2}{\sqrt{p}} - 2p\sqrt{p} - \sqrt{p}(1-p) \cosh\big( \frac{x}{\sqrt{p}}\big)}{(p\cosh\big( \frac{x}{\sqrt{p}}\big) + 1-p)^3 }\sinh\big( \frac{x}{\sqrt{p}}\big).$$ 
We have, for $p>p_*=1/3$ 
$$\frac{(1-p)^2}{\sqrt{p}} - 2p\sqrt{p}  < \sqrt{p}(1-p) .$$
Therefore, 
 $L^{(3)}$ is negative and therefore $h'$ is decreasing. Since $h'(0)=0$, we deduce that $h'$ is negative and $\psi$ is decreasing. If $p>p_*$, we have that $h''$ is positive and then negative. Therefore, $h'$ is increasing on $[0, x_0]$ and then decreasing on $[x_0,+\infty)$, with $x_0=\sqrt{p}\cosh^{-1} (\frac{1-2p-p^2}{p(1-p)})$. But,
$$h'(0 ) = 0, \   \lim_{x\to +\infty} h'(x) = - \frac{1}{\sqrt{p}},$$
as $L'(x) \sim_{+\infty} 1/\sqrt{p}$ and $L''(x) \sim_{+\infty}2(1-1/p)e^{-x/\sqrt{p}}$. Therefore, there exists $m_{*}>x_0$ such that $h'$ is positive on $(0,m_{*})$ and negative on $(m_{*},+\infty)$. We deduce that $\psi$ is increasing on $(0,m_{*})$ and decreasing on $(m_{*},+\infty)$ so that $\psi$ achieves its unique maximum at $m_{*}$. Moreover, $\psi''(m_{*})<0$. Indeed, otherwise we have
\begin{align*} \psi'(m_{*}) = 0, \psi''(m_{*}) = 0 & \Longleftrightarrow m_{*}L'(m_{*}) = 2L(m_{*}), \ m_{*}^2 L''(m_{*})= 4m_{*} L'(m_{*}) -6L(m_{*})\\
& \Longleftrightarrow L'(m_{*}) = m_{*}L''(m_{*}), \ m_{*}L'(m_{*}) = 2L(m_{*}).
\end{align*}
This implies that $h'(m_{*})=0$ which contradicts $m_{*}>x_{0}$ (and $L^{(3)}\neq 0$ on $[x_{0},m_{*}]$.
As $m_{*}>x_0$, we have that $h'(m_{*})<0$ and therefore $\psi''(m_{*}) <0$.


\end{example}

Studying the variational problem arising from the limit of the annealed spherical integral $\overline{F}(\theta)$ and $\underline{F}(\theta)$ defined in Proposition \ref{simpleFint},  we will show that for $\theta$ large enough we can give an explicit formula as stated in the following proposition.

\begin{proposition}\label{simpleFcompact}
There exists $\theta_0>1/\sqrt{A-1}$ such that for any $\theta\geq \theta_0$, $\overline{F}(\theta) = \underline{F}(\theta) =F(\theta)$ where
$$F(\theta) = \sup_{\alpha \in (0,1] } V(\alpha),$$
with
$$ \forall \alpha>0, \ V(\alpha) = \theta^2 (A-1) \alpha^2 +\theta^2 +\frac{1}{2}\log (1-\alpha).$$
More explicitly, 
\begin{align*} F(\theta)& = \frac{\theta^2}{4} (A-1) \Big(1 +\sqrt{1-\frac{1}{\theta^2 (A-1)}}\Big)^2 +\theta^2 +\frac{1}{2} \log \Big( 1 - \sqrt{1- \frac{1}{\theta^2 (A-1)}}\Big) -\frac{1}{2} \log 2.
\end{align*}
\end{proposition}
Given the above proposition is true, the result of Proposition \ref{compactprop}  immediately follows from Proposition \ref{lb}.

We prove this proposition by first showing that $\underline F(\theta)\ge F(\theta)$ for all $\theta$ and then that, for large $\theta$,
$\overline{F}(\theta)\le F(\theta)$.
\subsection{Proof of the lower bound}
Recall that
by   Proposition \ref{simpleFint}, 
we have the following formulation of the limit $\underline{F}(\theta)$. 
$$\underline{F}(\theta)=\sup_{\alpha_1 + \alpha_2 +\alpha_3 =1 \atop \alpha_i \geq  0} \liminf_{ \delta \to 0, K\to +\infty \atop \delta K \to 0} \limsup_{N\ra+\infty} \mathcal{F}^N_{\alpha_1,\alpha_2,\alpha_3}(\delta, K)\,,$$
where 
\begin{align*} \mathcal{F}^N_{\alpha_1,\alpha_2,\alpha_3}&(\delta, K)  =  \theta^2 \big( \alpha_1^2 +2\alpha_1 \alpha_2 + B \alpha_3^2\big) \\
&+ \sup_{t_i  \in I_2 , i\leq l \atop |\sum_i t_i^2 - N\alpha_2|\leq \delta N}
\sup_{ s_i  \in I_3 , i\leq k\atop |\sum_i s_i^2 - N\alpha_3|\leq \delta N} \Big\{\frac{1}{N}\sum_{i=1}^k\sum_{j=1}^{l} L\Big( \frac{2\theta s_i t_{j}}{\sqrt{N}}\Big)  + \frac{1}{2N} \sum_{i,j=1}^{l} L\Big(\frac{2\theta t_{i}t_{j}}{\sqrt{N}} \Big) \\
&+ \sup_{ \nu_1 \in \mathcal{P}(I_1) \atop \int x^2 d\nu_1(x) = \alpha_1} \Big\{\sum_{i=1}^k \int L\Big(\frac{2\theta s_ix}{\sqrt{N}}\Big) d\nu_1(x)-H(\nu_1)\Big\} - \frac{1}{2} \log (2\pi) - \frac{1}{2}\Big\},
\end{align*}

Our goal is to show that  we can take $\alpha_{3}=0$ and in the supremum defining $ \mathcal{F}^N_{\alpha_1,\alpha_2,\alpha_3}(\delta,K)$ we can take all the $t_{i}$'s equal. 
In fact we first prove the lower bound:
\begin{lemma} \label{lemlowerbound}
For any $\theta\geq 0$, 
$$\underline{F}(\theta) \ge \sup_{\alpha \in (0,1] } V(\alpha),$$
where $V$ is defined in Proposition \ref{simpleFcompact}.
\end{lemma}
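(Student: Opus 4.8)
The plan is to feed into the variational lower bound for $\underline F(\theta)$ supplied by Proposition \ref{simpleFint} a carefully chosen admissible configuration: one that places a fraction $\alpha$ of the mass on coordinates whose pairwise products sit exactly at the maximizer $m_*$ of $\psi$. We may assume $\theta>0$, since for $\theta=0$ one has $V(\alpha)=\tfrac12\log(1-\alpha)\le 0=\underline F(0)$ for all $\alpha$. First I would fix $\alpha\in(0,1)$, take the point $m_*$ of Assumption \ref{As-compact} (so $\psi(m_*)=A/2$, hence $L(m_*)=\tfrac A2 m_*^2$), and in the supremum defining $\underline F(\theta)$ choose $\alpha_1=1-\alpha$, $\alpha_2=\alpha$, $\alpha_3=0$.

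Inside $\mathcal F^N_{1-\alpha,\alpha,0}(\delta,K)$ I would then make the following choices: the empty $s$-configuration $k=0$, which is admissible because $\alpha_3=0$; all the $t_i$ equal to $t=\sqrt{m_*}\,N^{1/4}/\sqrt{2\theta}$ with $l=\lfloor 2\theta\alpha\sqrt N/m_*\rfloor$, so that $2\theta t_it_j/\sqrt N=m_*$ for all $i,j$; and $\nu_1\in\mathcal P(I_1)$ with $\int x^2\,d\nu_1=1-\alpha$ obtained by conditioning a Gaussian of variance close to $1-\alpha$ to lie in $I_1$. Since $t/N^{1/4}=\sqrt{m_*/(2\theta)}$ is a fixed positive constant, $t\in I_2$ as soon as $\delta<m_*/(2\theta)<K$, and $\sum_i t_i^2=l\,t^2=\alpha N+O(\sqrt N)$, so the constraint $|\sum_i t_i^2-\alpha N|\le\delta N$ holds for $N$ large; hence the configuration is admissible for all small $\delta$, large $K$ and large $N$.

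With these choices the $s$–$t$ and $s$–$\nu_1$ terms vanish, while $\theta^2(\alpha_1^2+2\alpha_1\alpha_2+B\alpha_3^2)=\theta^2(1-\alpha^2)$, and since $l^2/N\to 4\theta^2\alpha^2/m_*^2$,
\[
\frac{1}{2N}\sum_{i,j=1}^l L\Big(\frac{2\theta t_it_j}{\sqrt N}\Big)=\frac{l^2}{2N}L(m_*)=\frac{l^2}{2N}\cdot\frac{A}{2}m_*^2\ \longrightarrow\ A\theta^2\alpha^2 .
\]
Because $I_1$ exhausts $\R$ as $N\to\infty$, the conditioned Gaussian $\nu_1$ converges weakly to the Gaussian of variance $1-\alpha$ and $-H(\nu_1)\to\tfrac12\log(2\pi(1-\alpha))+\tfrac12$, so the entropic contribution $-H(\nu_1)-\tfrac12\log(2\pi)-\tfrac12$ tends to $\tfrac12\log(1-\alpha)$. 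Adding the three contributions, the value of $\mathcal F^N_{1-\alpha,\alpha,0}(\delta,K)$ at our configuration converges, as $N\to\infty$, to $\theta^2(1-\alpha^2)+A\theta^2\alpha^2+\tfrac12\log(1-\alpha)=\theta^2+\theta^2(A-1)\alpha^2+\tfrac12\log(1-\alpha)=V(\alpha)$. Hence $\limsup_N\mathcal F^N_{1-\alpha,\alpha,0}(\delta,K)\ge V(\alpha)$ for all small $\delta$ and large $K$, so $\liminf_{\delta\to0,\,K\to\infty,\,\delta K\to0}\limsup_N\mathcal F^N_{1-\alpha,\alpha,0}(\delta,K)\ge V(\alpha)$, and the supremum over $(\alpha_1,\alpha_2,\alpha_3)$ in Proposition \ref{simpleFint} gives $\underline F(\theta)\ge V(\alpha)$. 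Optimising over $\alpha\in(0,1)$ (and noting $V(1)=-\infty$) then yields the lemma. I expect the only slightly delicate points to be checking that the chosen configuration stays admissible uniformly in the iterated limit $\delta\to0$, $K\to\infty$, $N\to\infty$ in the prescribed order, and the convergence of the entropy of the conditioned Gaussian; both are routine.
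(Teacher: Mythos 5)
Your proof is correct and is essentially the paper's own argument: the same test configuration $\alpha_3=0$, all $t_i=N^{1/4}\sqrt{m_*/2\theta}$ with $l\approx 2\theta\alpha\sqrt N/m_*$ (so that $2\theta t_it_j/\sqrt N=m_*$ and $L(m_*)=\tfrac A2 m_*^2$), and $\nu_1$ the Gaussian of variance $1-\alpha$ restricted to $I_1$, yielding $\theta^2(1-\alpha^2)+A\theta^2\alpha^2+\tfrac12\log(1-\alpha)=V(\alpha)$. The only difference is bookkeeping: the paper fixes $l$ and lets $\alpha_2$ track $lm_*/2\theta\sqrt N$, while you fix $\alpha$ and round $l$, which is equivalent.
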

\begin{proof}
Indeed, if we take $\alpha_{3}=0$ and $t_{j}=N^{1/4}\sqrt{\frac{m_{*}}{2\theta}}, 1\le j\le l$, $\alpha_{2}\in [l m_{*}/2\theta \sqrt N-\delta, l m_{*}/2\theta \sqrt N+\delta]$, $\alpha_{1}=1-\alpha_{2}$, $\nu_{1}$ to be the Gaussian law restricted to $I_{1}$ with variance $\alpha_{1}$,  then we get the lower bound
$$\mathcal F^{N}_{\alpha_{1},\alpha_{2},0}(\delta,K)
\ge \theta^{2}(\alpha_{1}^{2}+2\alpha_{1}\alpha_{2}+\alpha_{2}^{2}A)+\frac{1}{2}\log\alpha_{1}= V(\alpha_{2})\,.$$
Hence, to derive the lower bound it is enough to remark that we can achieve any possible value of  $\alpha_{2}$ in  $[0,1]$ as  some large $N$ limit of
$l_{N} m_{*}/2\theta \sqrt N$  for some sequence of integer numbers $l_{N}$, which is obvious.
\end{proof}
\subsection{Proof of the upper bound}
The rest of this section is devoted to prove that the previous lower bound is sharp when $\theta$ is large enough.
To this end, recall that
by   Proposition \ref{simpleFint}, 
we have the following formulation of the limit $\overline{F}(\theta)$. 
$$\overline{F}(\theta)=\sup_{\alpha_1 + \alpha_2 +\alpha_3 =1 \atop \alpha_i \geq  0} \limsup_{ \delta \to 0, K\to +\infty \atop \delta K \to 0} \limsup_{N\ra+\infty} \mathcal{F}^N_{\alpha_1,\alpha_2,\alpha_3}(\delta, K)\,.$$
We first reformulate the supremum in $\mathcal{F}^N_{\alpha_1,\alpha_2,\alpha_3}(\delta, K)$ by 
denoting for $t\in I_{2}^{l}$ so that $|\sum t_{i}^{2}-N\alpha_{2}|\le\delta N$, 
$$\mu_{2}=\frac{1}{\alpha_{2 }N }\sum_{i=1}^{l}t_{i}^{2}\delta_{\frac{\sqrt{2\theta} t_{i}}{N^{1/4}}}\,.$$
$\mu_{2}$ is a positive measure on $S_2 = \{ x : \sqrt{2 \delta  \theta} \leq |x|\leq \sqrt{2
K\theta}\}$ whose total mass belongs to 
$[1-\frac{\delta}{\alpha_{2}},1+\frac{\delta}{\alpha_{2}}]$. We also denote by $S_3 = \{ x: \sqrt{K} \leq |x| \leq N^{1/4} \sqrt{\alpha_3} \}$.
Then it is not hard to see  that 
for any $\theta \geq 0$, 
\begin{equation} \label{dominFbar} \overline{F}(\theta) \leq \hat F(\theta),\end{equation}
where 
$\hat F(\theta)$ is defined by
$$\hat F(\theta)= \sup_{ \alpha_1 + \alpha_2 +\alpha_3 = 1\atop \alpha_i \geq 0} \limsup_{ \delta \to 0, K\to +\infty \atop \delta K \to 0} \limsup_{N\ra+\infty} \sup_{ \mu_2 \in \mathcal{P}(S_2) } \sup_{ s \in S_3} \mathcal G^{N}_{\alpha_1,\alpha_2,\alpha_3}(\delta, K,s,\mu_{2})
$$
if
\begin{align*} {\mathcal{G}}^N_{\alpha_1,\alpha_2,\alpha_3}(\delta, K,s,\mu_2) & =   \theta^2 \big( \alpha_1^2 +2\alpha_1 \alpha_2 + B \alpha_3^2 \big) \\
&+ 4\theta^2  \alpha_3\alpha_2 \int \psi\big( \sqrt{2\theta} s x) d\mu_2(x) + 2\theta^2 \alpha_2^2 \int \psi(xy) d\mu_2(x)d\mu_2(y) \\
&+ \sup_{ \nu_1 \in \mathcal{P}(I_1) \atop \int x^2 d\nu_1(x) = \alpha_1} \Big\{4\theta^2\alpha_3 \int x^2\psi\Big(\frac{2\theta sx}{N^{\frac{1}{4}}}\Big) d\nu_1(x)-H(\nu_1)\Big\} - \frac{1}{2} \log (2\pi) - \frac{1}{2}.
\end{align*}
Indeed, the upper bound  proceeds in two steps: first we take  the supremum over all  measures $\mu_{2}$ on  $S_{2}$ with mass in $[1-\frac{\delta}{\alpha_{2}},1+\frac{\delta}{\alpha_{2}}]$, and then restrict ourselves to probability measures as $\delta$ goes to zero (since $\psi$ is bounded). Then, we observe that
 for any  $\mu_2 \in\mathcal{P}(S_2)$, $\nu_1 \in \mathcal{P}(I_1)$, and $s \in S_3^k$ such that $|\sum_{i} s_i^2 -\alpha_3 \sqrt{N}|\leq \delta \sqrt{N}$,
\begin{align*} \frac{\alpha_2}{\sqrt{N}} \sum_{i=1}^k s_i^2 \int \psi\big( \sqrt{2\theta} s_i x) d\mu_2(x)&+\frac{1}{\sqrt{N}}\sum_{i=1}^k s_i^2 \int x^2\psi\Big(\frac{2\theta s_ix}{N^{\frac{1}{4}}}\Big) d\nu_1(x)\\
& \leq \alpha_3 \int \psi\big( \sqrt{2\theta} s x) d\mu_2(x)+ \int x^2 \psi\Big( \frac{2\theta sx}{N^{1/4}}\Big)d\nu_1(x)+o_{\delta}(1),
\end{align*}
where $s$ is a maximizer of the function 
$$s \in S_3 \mapsto  \alpha_2 \int \psi\big( \sqrt{2\theta} s x) d\mu_2(x) + \int x^2 \psi\Big( \frac{2\theta sx}{N^{1/4}}\Big)d\nu_1(x),$$
which ends the proof of the claim \eqref{dominFbar}. We will see that under our assumptions that $B<A$ and that the maximum of $\psi$ is uniquely achieved at $m_{*}$ such that $\psi''(m_{*})<0$, the upper bound $\hat F(\theta)$ is sharp when $\theta$ is large.

The starting point of our analysis of the variational problem defining $\hat F(\theta)$ in the regime where $\theta$ is large is the fact that $\underline{F}(\theta)$ and $\hat{F}(\theta)$ behave like $A\theta^2$. More precisely, we know from \eqref{lbFcompact} that there exists $\theta_0>0$ (depending on $A$) such that  for all $\theta\ge \theta_0,$
\begin{equation}\label{lbF} \hat{F}(\theta)\ge\underline{F}(\theta)\ge A\theta^{2}-\kappa\log\theta,\end{equation}
for some constant $\kappa>0$.

As a consequence, we can localize the suprema over $(\alpha_1,\alpha_2,\alpha_3)$ and $\mu_2$ in the definitions of  $\hat{F}(\theta)$ in some subset of the constraint set, denoted by $\mathcal{S}$, and defined as follow,
$$
 \mathcal{S} = \big\{  (\underline{\alpha}, \mu_2) \in [0,1]^3  \times  \mathcal{P}(S_2) :  \alpha_1 + \alpha_2 +\alpha_3 = 1\Big\}.$$

\begin{lemma}\label{cond} There exists a constant $\theta_0>0$ depending on $A$ such that for any $\theta \geq \theta_0$,  the
 suprema defining $\hat F(\theta)$ can be restricted to the set $\mathcal{A}_\theta \times \mathcal{B}_\theta \subset \mathcal{S}$ defined by,
$$  \underline{\alpha} \in \mathcal{A}_\theta \Longleftrightarrow 
 \alpha_{2}\ge 1-\frac{C\sqrt{\log\theta}}{\theta},\ \alpha_{1}\le \frac{C\log \theta}{\theta^2}, \ \alpha_{3}\le \frac{C\sqrt{\log\theta}}{\theta},$$
and
\begin{equation}\label{bmu2}\mu_2 \in \mathcal{B}_\theta \Longleftrightarrow
 \int  \Big(\frac{A}{2}-\psi(xy)\Big)d\mu_{2}(x)d\mu_{2}(y)\le \frac{C\log \theta}{\theta^{2}}.
\end{equation}
where $C$ is a some positive constant depending also on $A$.
\end{lemma}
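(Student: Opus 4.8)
The plan is to confront the lower bound \eqref{lbF} — which for $\theta$ large gives $\hat F(\theta)\ge\underline F(\theta)\ge A\theta^{2}-\kappa\log\theta$ — with a \emph{uniform} upper bound on $\mathcal G^{N}_{\alpha_{1},\alpha_{2},\alpha_{3}}(\delta,K,s,\mu_{2})$ drawn from the only soft inputs available: the pointwise inequality $\psi\le A/2$ of Assumption \ref{AG}, applied to each of the three $\psi$-terms of $\mathcal G^{N}$, and the fact that among $\nu_{1}\in\mathcal P(I_{1})$ with $\int x^{2}d\nu_{1}=\alpha_{1}$ one has $-H(\nu_{1})\le\tfrac12+\tfrac12\log(2\pi\alpha_{1})$, the Gaussian of variance $\alpha_{1}$ minimising $H$ (the computation already used in the proof of Lemma \ref{F0}). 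Writing $D(\mu_{2}):=\int\big(\tfrac A2-\psi(xy)\big)\,d\mu_{2}(x)\,d\mu_{2}(y)\ge 0$, this should give, uniformly in $N,\delta,K$ and $s$,
\[ \mathcal G^{N}_{\alpha_{1},\alpha_{2},\alpha_{3}}(\delta,K,s,\mu_{2})\ \le\ \theta^{2}P(\alpha_{1},\alpha_{2},\alpha_{3})\ +\ \tfrac12\log\alpha_{1}\ -\ 2\theta^{2}\alpha_{2}^{2}D(\mu_{2}), \]
where $P(\alpha_{1},\alpha_{2},\alpha_{3})=\alpha_{1}^{2}+2\alpha_{1}\alpha_{2}+B\alpha_{3}^{2}+2A\alpha_{2}\alpha_{3}+A\alpha_{2}^{2}+2A\alpha_{1}\alpha_{3}$, the term $2A\alpha_{1}\alpha_{3}$ being the contribution of the $\nu_{1}$-integral. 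Since the right-hand side is free of $N,\delta,K,s$, after $\limsup_{\delta,K}\limsup_{N}\sup_{s}$ it bounds the value of each $(\underline\alpha,\mu_{2})$-slice.

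Next I would substitute $\alpha_{2}=1-\alpha_{1}-\alpha_{3}$ and verify the exact identity $P=A(\alpha_{1}+\alpha_{2}+\alpha_{3})^{2}-(A-1)\alpha_{1}^{2}-2(A-1)\alpha_{1}\alpha_{2}-(A-B)\alpha_{3}^{2}$, i.e., on the simplex,
\[ P(\alpha_{1},\alpha_{2},\alpha_{3})\ =\ A\ -\ (A-1)\alpha_{1}^{2}\ -\ 2(A-1)\alpha_{1}\alpha_{2}\ -\ (A-B)\alpha_{3}^{2}, \]
in which every subtracted term is nonnegative, as $A>1$ and, by Assumption \ref{As-compact}, $A>B$. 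Feeding this into the upper bound and comparing with \eqref{lbF}, any pair $(\underline\alpha,\mu_{2})$ whose associated value is at least $A\theta^{2}-\kappa\log\theta-1$ — hence every pair relevant to the supremum defining $\hat F(\theta)$ — must satisfy
\[ (A-1)\theta^{2}\alpha_{1}^{2}+2(A-1)\theta^{2}\alpha_{1}\alpha_{2}+(A-B)\theta^{2}\alpha_{3}^{2}+2\theta^{2}\alpha_{2}^{2}D(\mu_{2})-\tfrac12\log\alpha_{1}\ \le\ \kappa\log\theta+1 . \]

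All five summands on the left being nonnegative (recall $\alpha_{1}\le1$, so $-\tfrac12\log\alpha_{1}\ge0$), each is $\le\kappa\log\theta+1$. The first and third give $\alpha_{1},\alpha_{3}\le C\sqrt{\log\theta}/\theta$, whence $\alpha_{2}=1-\alpha_{1}-\alpha_{3}\ge\tfrac12$ for $\theta$ large depending only on $A$. Re-injecting $\alpha_{2}\ge\tfrac12$ into the bound on the second summand upgrades the estimate to $\alpha_{1}\le C\log\theta/\theta^{2}$ — the sharpening from $\sqrt{\log\theta}/\theta$ to $\log\theta/\theta^{2}$ is possible precisely because the entropic term $\tfrac12\log\alpha_{1}$ lives only in the $\alpha_{1}$-direction, so the deficit of $P$ there is linear in $\alpha_{1}$, not quadratic — while re-injecting it into the fourth summand yields $D(\mu_{2})\le C\log\theta/\theta^{2}$, i.e. \eqref{bmu2}, and finally $\alpha_{2}\ge 1-C\log\theta/\theta^{2}-C\sqrt{\log\theta}/\theta\ge 1-C'\sqrt{\log\theta}/\theta$. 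Enlarging $C$ once to absorb all the constants (all depending only on $A$ and $\kappa$) produces exactly the defining inequalities of $\mathcal A_{\theta}$ and $\mathcal B_{\theta}$, so any pair lying outside $\mathcal A_{\theta}\times\mathcal B_{\theta}$ has value strictly below $A\theta^{2}-\kappa\log\theta\le\hat F(\theta)$ and can be discarded from the suprema.

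The main obstacle I anticipate is bookkeeping rather than analysis: the suprema over $\mu_{2}$ and $s$ sit \emph{inside} $\limsup_{\delta,K}\limsup_{N}$, so the displayed estimate must be established uniformly in $N,\delta,K,s$ before any limit is taken, and the bootstrap must be run in the right order — first restrict the outer supremum over $\underline\alpha$ to $\mathcal A_{\theta}$ using only the $\alpha$-dependent part of the bound (the nonpositive term $-2\theta^{2}\alpha_{2}^{2}D(\mu_{2})$ simply being dropped), and only afterwards, with $\alpha_{2}\ge\tfrac12$ now available, restrict $\mu_{2}$ to $\mathcal B_{\theta}$ through the $D(\mu_{2})$-term (noting that for $\mu_{2}\notin\mathcal B_{\theta}$ one gets $\mathcal G^{N}\le A\theta^{2}-\tfrac12 C\log\theta<A\theta^{2}-\kappa\log\theta$ once $C>2\kappa$). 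Everything else is the elementary algebra of $P$ and the discarding of nonnegative quantities, and $\theta_{0}$ is taken to be the largest of the finitely many thresholds (all depending only on $A$, through $\kappa$) arising along the way.
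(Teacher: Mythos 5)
Your proposal is correct and follows essentially the same route as the paper: it confronts the lower bound \eqref{lbF} with the uniform upper bound obtained from $\psi\le A/2$ and the Gaussian entropy minimizer, rewrites the deficit $A\theta^{2}-\mathcal G^{N}$ as a sum of nonnegative terms via the identity for $P$ on the simplex, bounds each term by $O(\log\theta)$, and bootstraps ($\alpha_{1},\alpha_{3}\le C\sqrt{\log\theta}/\theta$, hence $\alpha_{2}\ge 1/2$, hence the improved bound on $\alpha_{1}$ via the cross term $\alpha_{1}\alpha_{2}$ and the bound on $\int(\tfrac A2-\psi)\,d\mu_{2}^{\otimes 2}$ via the $\alpha_{2}^{2}$ term). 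This is exactly the paper's argument, up to harmless bookkeeping differences in how the $\nu_{1}$ and entropy terms are grouped.
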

\begin{proof} 
From \eqref{lbF} we deduce that we can restrict the suprema in the definitions of $\hat F(\theta)$ to the parameters $\underline{\alpha}, s,\nu_1, \mu_2$ with $\alpha_1+\alpha_2+\alpha_3=1$, $s \in S_3$, $\int x^2 d\mu_1(x) =\alpha_1$ such that,
\begin{align*}& (A-1)(\alpha_{1}^{2}+2\alpha_{1}\alpha_{2})+(A-B)\alpha_{3}^{2}
+4\alpha_2\alpha_{3}\int \Big(\frac{A}{2}-\psi(2\theta sy)\Big) d\mu_{2}(y)\\
&+2 \alpha_2^2\int\Big(\frac{A}{2}-\psi(2\theta xy)\Big) d\mu_{2}(y) d\mu_2(x)+4\alpha_{3}\int y^{2 }\Big(\frac{A}{2}-\psi(2\theta sy)\Big) d\nu_{1}(y)\\
& +\frac{1}{\theta^2} \big( H(\nu_{1}) + \log \sqrt{2\pi}\big) \le \frac{2\kappa\log \theta}{\theta^{2}}.\end{align*}
But
$$ 4\alpha_{3}\int y^{2 }\Big(\frac{A}{2}-\psi(2\theta sy)\Big) d\nu_{1}(y)+\frac{1}{\theta^2}\big(H(\nu_{1}) + \log \sqrt{2\pi}\big)   \geq \frac{1}{2} \log \frac{1}{\alpha_1}\geq 0.$$
Therefore,
\begin{align*} (A-1)(\alpha_{1}^{2}+2\alpha_{1}\alpha_{2})&+(A-B)\alpha_{3}^{2}
+4\alpha_2\alpha_{3}\int y^{2 }\Big(\frac{A}{2}-\psi(2\theta sy)\Big) d\mu_{2}(y)\\
&+2\alpha_2^2 \int\Big(\frac{A}{2}-\psi(xy)\Big) d\mu_{2}(y) d\mu_2(x)
\le \frac{2\kappa \log \theta}{\theta^{2}}.\end{align*}
Since each term is non-negative, they are all bounded by $2\kappa \log \theta/\theta^{2}$.
Note that this already yields with $C={4}\kappa /\min\{ (A-B),A-1 \}$,
\begin{equation}\label{jh}\alpha_{1}^{2}\le \frac{C\log \theta}{\theta^{2}}, \ \alpha_{3}^{2}\le \frac{C\log \theta}{\theta^{2}},  \ \alpha_{1}\alpha_{2}\le  \frac{C\log \theta}{\theta^{2}}.\end{equation}
The two first estimates imply since $\alpha_2 = 1- \alpha_1-\alpha_3$,
$$\alpha_{2}\ge 1-2\frac{\sqrt{C\log \theta}}{\theta}.$$
We can finally plug back this estimate into the last inequality of \eqref{jh} to improve the estimate on $\alpha_1$ as announced.
\end{proof}
 {Next, note that because $\psi$ is bounded continuous, the function $ \mathcal G^{N}_{\alpha_1,\alpha_2,\alpha_3}(\delta, K,s,.)$ we are optimizing over $\mu_{2}$, is bounded continuous in  $\mu_{2}$ and therefore it achieves its maximal value. We denote by $\mu_{2}$ such an optimizer. In the next lemma, we prove  that the  optimizers of $ \mathcal G^{N}_{\alpha_1,\alpha_2,\alpha_3}(\delta, K,s,.)$ are concentrated around $\sqrt{m_{*}}$ if $\psi$ takes its  maximum value  at $m_{*}$ only.}
\begin{lemma}\label{concmu2} Assume that $\psi$ achieves its maximum value at $m_{*}$ only and that it is strictly concave in an open neighborhood of this point. Let $\mu_{2}$ be an optimizer of  $\mathcal G^{N}_{\alpha_1,\alpha_2,\alpha_3}(\delta, K,s,.)$.
There exists $\eps_0>0$ such that  for any  $\mu_2 \in \mathcal{B}_\theta$,
$$ \forall 0< \eps <\eps_0, \ \mu_2\big(|x-\sqrt{m_{*}}|\ge \eps\big)\le \frac{C \sqrt{\log \theta}}{\theta \eps},$$
where $C$ is a positive constant depending on $\psi$.
\end{lemma}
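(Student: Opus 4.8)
The plan is to exploit that, under the constraint \eqref{bmu2} defining $\mathcal{B}_\theta$, the nonnegative quantity $\int\!\int\bigl(\tfrac A2-\psi(xy)\bigr)\,d\mu_2(x)\,d\mu_2(y)$ is of order $\log\theta/\theta^2$, hence small; since $\tfrac A2-\psi$ vanishes on $\R_+$ only at $m_{*}$ and, by the assumption $\psi''(m_{*})<0$, is strictly convex near $m_{*}$ with $\psi'(m_{*})=0$, this should force the product $xy$ (with $x,y$ drawn from $\mu_2\otimes\mu_2$) to concentrate near $m_{*}$, and hence $\mu_2$ to concentrate $|x|$ near $\sqrt{m_{*}}$. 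First, using that $\psi$ is even, I would observe that $\mathcal{G}^N_{\alpha_1,\alpha_2,\alpha_3}(\delta,K,s,\cdot)$ depends on $\mu_2$ only through integrals of $\psi(\sqrt{2\theta}\,sx)$ and of $\psi(xy)$, so one may assume $\mu_2$ is supported on $\R_+\cap S_2$; then $|x-\sqrt{m_{*}}|\ge\eps$ is equivalent to $x\le\sqrt{m_{*}}-\eps$ or $x\ge\sqrt{m_{*}}+\eps$.

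The key quantitative input is a one-sided quadratic lower bound: there exist $c_1>0$ and $\rho_0\in(0,m_{*})$, depending only on $\psi$, such that $\tfrac A2-\psi(u)\ge c_1\min\bigl((u-m_{*})^2,\rho_0^2\bigr)$ for all $u\ge 0$. Near $m_{*}$ this follows from $\psi''(m_{*})<0$ and $\psi'(m_{*})=0$ by a second-order Taylor expansion; away from $m_{*}$ it follows from the continuity and positivity of $\tfrac A2-\psi$ on $\{u\ge 0:\ |u-m_{*}|\ge\rho_0\}$, together with $\psi(0)=\tfrac12<\tfrac A2$ and $\lim_{|u|\to\infty}\psi(u)=\tfrac B2<\tfrac A2$ (which control the two ends of the half-line), making the infimum of $\tfrac A2-\psi$ over that set strictly positive. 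Inserting this bound into \eqref{bmu2} and applying Markov's inequality yields, for every $\rho\in(0,\rho_0]$, an estimate $(\mu_2\otimes\mu_2)\bigl(|xy-m_{*}|\ge\rho\bigr)\lesssim \log\theta/(\theta^2\rho^2)$, with implicit constant depending only on $\psi$.

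To pass from concentration of $xy$ to concentration of $x$, set $\eps_0=\rho_0/\sqrt{m_{*}}$ and fix $0<\eps<\eps_0$ (note $\eps<\sqrt{m_{*}}$ since $\rho_0<m_{*}$), and let $A_+=\{x\ge\sqrt{m_{*}}+\eps\}$ and $A_-=\{0\le x\le\sqrt{m_{*}}-\eps\}$, so that $\mu_2\bigl(|x-\sqrt{m_{*}}|\ge\eps\bigr)=\mu_2(A_+)+\mu_2(A_-)$. The point is that if $x,y$ both lie in $A_+$, or both in $A_-$, then $|xy-m_{*}|\ge\sqrt{m_{*}}\,\eps=:\rho\le\rho_0$ (the two excursions are on the same side of $\sqrt{m_{*}}$, so they cannot compensate); hence $\mu_2(A_+)^2+\mu_2(A_-)^2=(\mu_2\otimes\mu_2)\bigl((A_+\times A_+)\cup(A_-\times A_-)\bigr)\lesssim \log\theta/(\theta^2 m_{*}\eps^2)$ by the previous step. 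Using $\bigl(\mu_2(A_+)+\mu_2(A_-)\bigr)^2\le 2\bigl(\mu_2(A_+)^2+\mu_2(A_-)^2\bigr)$ and taking square roots gives $\mu_2\bigl(|x-\sqrt{m_{*}}|\ge\eps\bigr)\le C\sqrt{\log\theta}/(\theta\eps)$ for some $C>0$ depending only on $\psi$, which is the claim. Finally, Lemma~\ref{cond} ensures that for $\theta\ge\theta_0$ the optimizers entering the variational problem for $\hat F(\theta)$ belong to $\mathcal{B}_\theta$, so the estimate applies to them.

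The main obstacle is the implication from ``$xy$ concentrated near $m_{*}$'' to ``$|x|$ concentrated near $\sqrt{m_{*}}$'': it is genuinely false that $x,y$ both far from $\sqrt{m_{*}}$ forces $xy$ far from $m_{*}$ --- take $x=\sqrt{m_{*}}+\eps$ and $y\approx\sqrt{m_{*}}-\eps$, so that $xy\approx m_{*}$. Splitting the bad set into the two one-sided pieces $A_\pm$ is exactly what circumvents this, and combining the quadratic lower bound $\min\bigl((u-m_{*})^2,\rho_0^2\bigr)$ with the squaring inherent in $\mu_2(A_\pm)^2$ is what converts the input rate $\log\theta/\theta^2$ into the stated rate $\sqrt{\log\theta}/(\theta\eps)$. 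The only other point requiring care is the reduction to $\mu_2$ supported on $\R_+$, via evenness of $\psi$; without it the estimate would fail for measures carrying mass near $-\sqrt{m_{*}}$, for which $|x-\sqrt{m_{*}}|$ is large while $\psi(xy)$ can still be near its maximum.
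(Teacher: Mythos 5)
Your proof is correct and follows essentially the same route as the paper: Markov's inequality applied to the bound defining $\mathcal{B}_\theta$, combined with a quadratic lower bound on $\tfrac{A}{2}-\psi$ near $m_{*}$ (from $\psi''(m_{*})<0$) and positivity of the gap away from $m_{*}$, then the one-sided splitting of the product event to pass from ``$xy$ near $m_{*}$'' to ``$|x|$ near $\sqrt{m_{*}}$'' and a union bound, yielding the rate $\sqrt{\log\theta}/(\theta\eps)$. The only cosmetic differences are that the paper justifies the lower bound away from $m_{*}$ by invoking analyticity of $\psi$ (finitely many local maxima) rather than the limits $\psi(0)=\tfrac12$ and $\psi(\pm\infty)=\tfrac{B}{2}$, and it implicitly treats $\mu_2$ as supported on $\R_+$, a point you make explicit via the evenness of $\psi$.
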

\begin{proof}
Let  $\mu_2 \in \mathcal{B}_\theta$. By Lemma \ref{cond} we have,
\begin{equation} \label{bmu2bis}\int \Big(\frac{A}{2} -\psi(xy)\Big)d\mu_2(x) d\mu_2(y)\le \frac{C\log \theta}{\theta^{2}}.\end{equation}
Since $\psi$ is strictly concave in a neighborhood of $m_{*}$, and $m_{*}$ is its unique maximizer, we deduce that there exists $\eta_0>0$ such that for all $0<\eta < \eta_0$,
$$ \forall |x - m_{*}| \in [ \sqrt{\eta}, \sqrt{\eta_0}], \quad \frac{A}{2} -\psi(x) \geq \eta/c,$$
for some constant $c>0$. As $\psi$ is analytic,  it admits a finite number of local maxima. Therefore, we can find $\eta_0>0$ such that for all $0<\eta<\eta_0$,
$$ \forall |x - m_{*}| \geq  \sqrt{\eta}, \quad \frac{A}{2} -\psi(x) \geq \eta/c,$$ Since $\frac{A}{2} -\psi$ is non-negative, we deduce from \eqref{bmu2bis} that
$$\forall \eta <\eta_0, \ \mu_2^{\otimes 2}\big(|xy-m_{*}|\ge \sqrt{\eta}\big)\le \frac{C'\log \theta}{\eta \theta^{2}},$$
where $C '\geq 1$ is a constant depending on $\psi$. But  for $\eps$ small enough, we have 
$$ \mu_2( [0, \sqrt{m_{*}}-\eps])^2 \leq \mu_2^{\otimes 2}\big( xy \leq m_{*} - \sqrt{m_{*}}\eps \big) \mbox{ and }  
\mu_2\big([\sqrt{m_{*}} +\eps,+\infty)\big)^2 \leq \mu_2^{\otimes 2}\big( xy \geq m_{*} + \sqrt{m_{*}} \eps\big)$$
from which the result follows by a union bound. \end{proof}

Using Lemma \ref{concmu2}, we will show that the optimization problem over $\mu_2$  is asymptotically solved by $\delta_{\sqrt{m_{*}}}$, with an error which vanishes when $K$, and therefore the lower boundary point  of $S_3$,  goes to $+\infty$.
\begin{lemma}\label{optimmu2}
There exists $\theta_{0}$ depending on $\psi$ such that for any $\theta\ge \theta_{0}$,  $\underline{\alpha} \in \mathcal{A}_\theta$ and $s\in S_3$,
\begin{align*} \sup_{ \mu_2 \in \mathcal{B}_\theta} \Big\{2 \alpha_3 \int \psi\big( \sqrt{2\theta} s x) d\mu_2(x) + \alpha_2 \int \psi(xy) d\mu_2(x)d\mu_2(y) \Big\} = \frac{A\alpha_2}{2}+ \frac{B\alpha_3}{2} + o_K(1).
\end{align*}
\end{lemma}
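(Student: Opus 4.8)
The plan is to reduce the supremum over $\mu_2 \in \mathcal{B}_\theta$ to the Dirac mass at $\sqrt{m_*}$ using the concentration estimate of Lemma \ref{concmu2}, and to control the surviving interaction term involving $s \in S_3$ by exploiting that on $S_3$ the argument $\sqrt{2\theta}sx$ is large, so $\psi(\sqrt{2\theta}sx) \to B/2$ uniformly. First I would write $J(\mu_2) := 2\alpha_3 \int \psi(\sqrt{2\theta}sx)\,d\mu_2(x) + \alpha_2 \int\!\!\int \psi(xy)\,d\mu_2(x)d\mu_2(y)$ and note the trivial upper bound $J(\mu_2) \le A\alpha_3 + \tfrac{A}{2}\alpha_2$, coming from $\psi \le A/2$; the point is to improve $A\alpha_3$ to $\tfrac{B}{2}\alpha_3 + o_K(1)$ for every $\mu_2 \in \mathcal{B}_\theta$, and simultaneously to show $\alpha_2 \int\!\!\int \psi(xy)\,d\mu_2^{\otimes 2} = \tfrac{A}{2}\alpha_2 + o_K(1)$, so that the claimed value is attained (the lower bound being immediate by taking $\mu_2 = \delta_{\sqrt{m_*}}$, for which $\int\!\!\int \psi(xy) = \psi(m_*) = A/2$ and $\psi(\sqrt{2\theta}s\sqrt{m_*}) \to B/2$ since $s \ge \sqrt{K}$ and so the argument exceeds $\sqrt{2\theta K m_*} \to \infty$).

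For the second interaction term: by \eqref{bmu2} we have $\int\!\!\int (\tfrac{A}{2} - \psi(xy))\,d\mu_2^{\otimes 2} \le C\log\theta/\theta^2$, so directly $\alpha_2\int\!\!\int \psi(xy)\,d\mu_2^{\otimes 2} \ge \tfrac{A}{2}\alpha_2 - C\log\theta/\theta^2 = \tfrac{A}{2}\alpha_2 - o_K(1)$ (recalling $\alpha_2 \le 1$ and that $C\log\theta/\theta^2 \to 0$ as $\theta \to \infty$; here one uses that $\theta$ is large, which is legitimate since we only claim the identity for $\theta \ge \theta_0$), and the reverse inequality is $\psi \le A/2$. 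For the first term I would split $\int \psi(\sqrt{2\theta}sx)\,d\mu_2(x)$ according to whether $|x - \sqrt{m_*}| < \eps$ or not. On $\{|x-\sqrt{m_*}|<\eps\}$ we have $|x| \ge \sqrt{m_*}-\eps > 0$, hence the argument $\sqrt{2\theta}sx$ satisfies $|\sqrt{2\theta}sx| \ge \sqrt{2\theta K}(\sqrt{m_*}-\eps)$ because $s \ge \sqrt{K}$; since $\psi(y) \to B/2$ as $|y| \to \infty$, this gives $\psi(\sqrt{2\theta}sx) \le B/2 + \eps_K$ with $\eps_K \to 0$ as $K \to \infty$ (uniformly once $\theta$ is bounded below). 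On the complementary set we use only $\psi \le A/2$ together with the tail bound $\mu_2(|x-\sqrt{m_*}| \ge \eps) \le C\sqrt{\log\theta}/(\theta\eps)$ from Lemma \ref{concmu2}. Combining, $2\alpha_3 \int \psi(\sqrt{2\theta}sx)\,d\mu_2(x) \le 2\alpha_3(\tfrac{B}{2} + \eps_K) + 2\alpha_3 \cdot \tfrac{A}{2}\cdot \tfrac{C\sqrt{\log\theta}}{\theta\eps} = B\alpha_3 + o_K(1)$, where we used $\alpha_3 \le C\sqrt{\log\theta}/\theta$ from $\mathcal{A}_\theta$, so that the error term is $O(\log\theta/\theta^2)/\eps$; choosing $\eps$ a slowly vanishing function of $\theta$ (or just fixed small and then letting $\theta \to \infty$) makes all the leftover terms $o_K(1)$.

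The main obstacle is bookkeeping the order of limits: the statement has $\limsup_{N}$, then $\limsup_{\delta\to 0, K\to\infty, \delta K\to 0}$, then $\sup$ over the localized sets, and the error terms come in three flavors ($o_K(1)$, $\varepsilon_K(\delta)$, $O(\delta)$, plus the $\theta^{-1}$-polynomial-log terms). I would be careful that the $\eps_K$ arising from $\psi(y) \to B/2$ depends on $K$ alone once $\theta \ge \theta_0$, so it is legitimately absorbed into $o_K(1)$, and that the Lemma \ref{concmu2} tail bound, although it has a $1/\eps$ blowup, is multiplied by $\alpha_3 = O(\sqrt{\log\theta}/\theta)$, which kills it for large $\theta$ with $\eps$ fixed. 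One subtlety: Lemma \ref{concmu2} requires $\mu_2 \in \mathcal{B}_\theta$, which is exactly the localization furnished by Lemma \ref{cond}, so the argument is only valid after restricting to $\mathcal{A}_\theta \times \mathcal{B}_\theta$; this is why the statement quantifies over $\underline\alpha \in \mathcal{A}_\theta$, and I would invoke Lemma \ref{cond} at the outset to justify that this restriction is harmless for $\theta \ge \theta_0$. Finally I would note that the diagonal contribution to $\Sigma_{2,2}$ has already been shown negligible in the proof of Lemma \ref{WigGen2}, so no further care is needed there.
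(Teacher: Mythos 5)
Your lower bound (evaluation at $\mu_2=\delta_{\sqrt{m_*}}$) and your treatment of the $\alpha_2\int\!\!\int\psi(xy)\,d\mu_2^{\otimes 2}$ term are fine, and you are right that Lemma \ref{concmu2} really only uses $\mu_2\in\mathcal{B}_\theta$. But there is a genuine gap in the upper bound: the contribution of the bad set $\{|x-\sqrt{m_*}|\ge\eps\}$ is \emph{not} $o_K(1)$. Lemma \ref{concmu2} gives $\mu_2(|x-\sqrt{m_*}|\ge\eps)\le C\sqrt{\log\theta}/(\theta\eps)$ with no $K$-dependence, and on the portion of $S_2$ where $|x|$ is small (the lower edge of $S_2$ is $\sqrt{2\delta\theta}$, which is tiny since $\delta\to 0$) you cannot say $\psi(\sqrt{2\theta}sx)\approx B/2$, only $\psi\le A/2$. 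So for fixed $\theta\ge\theta_0$ your bound on the first term is $B\alpha_3+2\alpha_3\eps_K+O\big(\alpha_3\sqrt{\log\theta}/(\theta\eps)\big)$, and the last term does not tend to $0$ as $K\to\infty$; it only tends to $0$ as $\theta\to\infty$. Your remedy ``fix $\eps$ small and let $\theta\to\infty$'' is not available: $\theta$ is fixed in the statement, and downstream (Propositions \ref{simpleFstep}--\ref{simpleFcompact}) the bracket is multiplied by $2\theta^2\alpha_2$ before the limit $K\to\infty$ is taken, so your leftover becomes of order $\theta^2\alpha_3\sqrt{\log\theta}/(\theta\eps)=O(\log\theta/\eps)$. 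That would only give $\overline F(\theta)\le\sup_\alpha V(\alpha)+O(\log\theta)$, which destroys the exact identity $\overline F(\theta)=\underline F(\theta)=F(\theta)$ on $[\theta_0,\infty)$ that is needed to get $[\theta_0,\infty)\subset\mathcal C_\mu$ and hence the large deviation lower bound.

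The missing ingredient is exactly the step the paper devotes the proof to: one does not bound the functional at an arbitrary $\mu_2\in\mathcal{B}_\theta$, but at a \emph{maximizer}. Writing the first-order (Euler--Lagrange) condition for an optimizer $\mu_2$ of $\mu\mapsto 2\alpha_3\int\overline\psi(\sqrt{2\theta}sx)\,d\mu+\alpha_2\int\!\!\int\overline\psi(xy)\,d\mu\,d\mu$ on $S_2$ (with $\overline\psi=\psi-B/2$), and combining it with the concentration estimate \eqref{concpsi} of Lemma \ref{concmu2}, one shows that for $\theta\ge\theta_0$ the support of $\mu_2$ avoids $[0,\sqrt{m_*}/2]$ entirely. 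On that support, $\sqrt{2\theta}s|x|\ge cK\sqrt{m_*\theta}$, so $\overline\psi(\sqrt{2\theta}sx)\le\sup_{y\ge cK\sqrt{m_*\theta}}\overline\psi(y)$, which is an honest $o_K(1)$ at fixed $\theta$, with no leftover mass term at all. Without this support (or an equivalent) argument, the concentration bound of Lemma \ref{concmu2} alone cannot deliver the $o_K(1)$ error claimed in the lemma. (Two minor points: your computation, like the paper's proof, actually yields $A\alpha_2/2+B\alpha_3+o_K(1)$, so the $B\alpha_3/2$ in the statement appears to be a typo; and the remark about the diagonal of $\Sigma_{2,2}$ is irrelevant here.)
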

\begin{proof}
Letting $\overline{\psi}(x)= \psi(x) - \frac{B}{2}$, it is equivalent to show that: 
\begin{align*} \sup_{ \mu_2 \in \mathcal{B}_\theta} \Big\{2 \alpha_3 \int \overline{\psi}\big( \sqrt{2\theta} s x) d\mu_2(x) + \alpha_2 \int \overline{\psi}(xy) d\mu_2(x)d\mu_2(y) \Big\} = \frac{(A-B )\alpha_2}{2} + o_K(1).
\end{align*}

Let us fix $\theta \geq \theta_0$ where $\theta_0$ is given by Lemma \ref{cond}. Observe that since $\psi$ is bounded continuous,
$$ Z: \mu \in \mathcal{P}(S_2) \mapsto 2 \alpha_3 \int \overline{\psi}\big( \sqrt{2\theta} s x) d\mu(x) + \alpha_2 \int \overline{\psi}(xy) d\mu(x)d\mu(y)$$
achieves its maximum value in the closed set $\mathcal B_\theta$. Let $\mu_2$ be an optimizer, and therefore a critical point of this function. Writing that $Z(\mu_2)\ge Z(\mu_2+\eps \nu)$ for all {signed} measures $\nu$ on $S_2$ such that $\mu_2+\eps \nu$ is a probability measure for small $\eps$, 
we deduce that there exists a constant $C>0$ such that,
\begin{equation} \label{critmu2} \forall x \in S_2,  \ \alpha_{3}\overline{\psi}(\sqrt{ {2} \theta} s x)+\alpha_{2}\int \overline{\psi}(xy) d\mu_{2}(y)\leq C,\end{equation}
with equality $\mu_2$-almost surely. 
Using Lemma \ref{concmu2}, we get for any $\eps$ small enough,
$$\int \overline{\psi}(xy) d\mu_2(y) = \overline{\psi}(\sqrt{m_{*}} x) + \int_{[\sqrt{m_{*}}-\eps,\sqrt{m_{*}}+\eps]} \big( \overline{\psi}(xy) - \overline{\psi}(\sqrt{m_{*}}x)\big) d\mu_2(y) + O\Big( \frac{\sqrt{\log \theta}}{\theta \eps}\Big).$$
Where we notice that our $O( \sqrt{\log \theta}/\theta \eps)$ is a function that does not depend on $\delta,K$ or $N$. 
As $L$ is the log-Laplace transform of a  sub-Gaussian  distribution, we have that ${x\mapsto | L'(x)/x|}$ is bounded. In particular, $|\psi'|$ is bounded and thus $\psi$ is Lipschitz. Therefore,  for any $x \leq M$,
$$ \int \overline{\psi}(xy) d\mu_2(y)=\overline{\psi}(\sqrt{m_{*}}x)+O\Big(\eps M+\frac{\sqrt{\log \theta}}{\theta\eps}\Big).$$
Again, $O\Big(\eps M+\frac{\sqrt{\log \theta}}{\theta\eps}\Big)$ does not depend on $\delta,K$ or $N$.
We choose $\eps=\theta^{-1/2}$ and $M = \theta^{1/4}$ so that the two error term above goes to zero when $\theta$ goes to $\infty$, so that we have for any $x \geq 0$,
\begin{equation} \label{concpsi}  \int \overline{\psi}(xy) d\mu_2(y) = \overline{\psi}(\sqrt{m_{*}} x) + o_\theta(1).\end{equation}
In particular,
$$ \alpha_{3} \overline{\psi}(\sqrt{ 2\theta} s x)+\alpha_{2}\int \overline{\psi}(xy) d\mu_2(y) = \overline{\psi}(\sqrt{m_{*}} x) + o_\theta(1).$$
Taking $x = \sqrt{m_{*}}$ in \eqref{critmu2}, we get
\begin{equation} \label{borneinf} C \geq \frac{A - B }{2}-o_\theta(1),\end{equation}
since $s\geq K$ and $1 -\alpha_2 \leq O(\frac{\sqrt{\log \theta}}{\theta})$. The term $o_{\theta}(1)$ above do not depend on $K, \delta$ or $N$.  We claim that there exists $\theta_0$ such that for any $\theta \geq \theta_0$, 
$$ \mu_2([0,\sqrt{m_{*}}/2]) =0.$$
 Indeed, if $x \leq \sqrt{m_{*}}/2$, we have by \eqref{concpsi} and the fact that $\alpha_2$ goes to $1$ as $\theta$ goes to infinity,
\begin{align*}
 \alpha_{3}\overline{\psi}(\sqrt{2\theta} s x)+\alpha_{2}\int \overline{\psi}(xy) d\mu(y)& \leq \sup_{t\leq \sqrt{m_{*}}/2} \overline{\psi}(\sqrt{m_{*}} t) + o_\theta(1),
\end{align*}
with $\sup_{t\leq \sqrt{m_{*}}/2} \overline{\psi}(\sqrt{m_{*}} t) < (A-B)/2$ since the maximum of $\psi$ is uniquely achieved at $m_{*}$. From \eqref{borneinf} and the fact that equality in \eqref{critmu2} holds  $\mu_2$-a.s, we deduce that for $\theta$ large enough (and not depending on $\delta,K$ or $N$) $[0,\sqrt{m_{*}}/2] \cap \mathrm{supp}(\mu_2) = \emptyset$. Therefore,
\begin{align*}
 2 \alpha_3 \int \overline{\psi}\big( \sqrt{2\theta} s x) d\mu_2(x) + \alpha_2 \int \overline{\psi}(xy) d\mu_2(x)d\mu_2(y) & \leq \frac{(A-B)\alpha_2}{2}+ 2\sup_{y \geq  K\frac{\sqrt{m_{*}\theta }}{2}} \overline{\psi}(y) \\
& = \frac{(A -B)\alpha_2}{2} + o_K(1).
\end{align*}Thus,
\begin{align*}
\sup_{\mu_2 \in \mathcal{B}_\theta}\Big\{ 2 \alpha_3 \int \overline{\psi}\big( \sqrt{2\theta} s x) d\mu_2(x) + \alpha_2 \int \overline{\psi}(xy) d\mu_2(x)d\mu_2(y)\Big\} \leq \frac{(A-B)\alpha_2}{2} + o_K(1).
\end{align*}
The reverse inequality is achieved by taking  $\mu_2= \delta_{\sqrt{m_{*}}}$,  which completes the proof.
\end{proof}

We deduce that  taking $\delta$ to $0$ and $K$ to $+ \infty$, we can simplify the expression of $\hat{F}(\theta)$. 
\begin{proposition}\label{simpleFstep}
There exists $\theta_0$ depending on $\psi$ such that for any $\theta \geq \theta_0$, $\overline{F}(\theta) \le \hat F(\theta)$, where
$$ \hat F(\theta) = \sup_{ (\underline{\alpha},s,\nu)\in \mathcal{S}' } \mathcal{F}(\underline{\alpha}, s,\nu),$$
with
\begin{align*}\mathcal{F}(\underline{\alpha},s,\nu) 
 & =  \theta^2 \big( \alpha_1^2 +2\alpha_1 \alpha_2 \big) + \theta^2 A \alpha_2^2 + \theta^2 B (\alpha_3^2 + 2 \alpha_3 \alpha_2) \\ 
&+ 4\theta^2\alpha_3 \int x^2\psi(2\theta s\sqrt{\alpha_3} x) d\nu(x)-H(\nu)- \frac{1}{2} \log (2\pi) - \frac{1}{2},
\end{align*}
and 
$$ \mathcal{S}' = \Big\{ (\underline{\alpha}, s,\nu) \in [0,1]^3\times [0,1]\times \mathcal{P}(\R) : \alpha_1+\alpha_2+\alpha_3 = 1, \int x^2 d\nu(x) = \alpha_1\Big\}.$$
\end{proposition}
\begin{proof}
By Lemmas \ref{cond} and \ref{optimmu2}, we know that 
$$\hat {F}(\theta) = \sup_{\underline{\alpha} \in \mathcal{A}_\theta}  \limsup_{ \delta \to 0, K\to +\infty \atop \delta K \to 0} \limsup_{N\ra+\infty} \hat{\mathcal{F}}^N_{\alpha_1,\alpha_2,\alpha_3}(\delta, K),$$
where 
\begin{align*} \hat{\mathcal{F}}^N_{\alpha_1,\alpha_2,\alpha_3}(\delta, K) & =  \sup_{ s \in S_3}  \sup_{ \nu_1 \in \mathcal{P}(I_1) \atop \int x^2 d\nu_1(x) = \alpha_1} \Big\{4\theta^2\alpha_3 \int x^2\psi\Big(\frac{2\theta sx}{N^{\frac{1}{4}}}\Big) d\nu_1(x)-H(\nu_1)\Big\} \\
&+ \theta^2 \big( \alpha_1^2 +2\alpha_1 \alpha_2 \big) + A\alpha_2^2 + B(\alpha_3^2 +2 \alpha_2 \alpha_3) - \frac{1}{2} \log (2\pi) - \frac{1}{2},
\end{align*} 
$S_3 =[ K,N^{1/4} \sqrt{\alpha_3}]$. Using the change of variable $s\mapsto sN^{-1/4}$ we have the upper bound,
$$ \hat{F}(\theta) \leq  \sup_{ (\underline{\alpha},s,\nu)\in \mathcal{S}' } \mathcal{F}(\underline{\alpha}, s,\nu).$$
\end{proof}

We finally prove that the supremum is taken at $\alpha_3=0$.
\begin{proposition}\label{simpleFstep2}
There exists $\theta_0$ depending on $A$ such that for any $\theta \geq \theta_0$, 
$$ \sup_{ (\underline{\alpha},s, \nu) \in \mathcal{S}'} \mathcal{F}(\underline{\alpha},s,\nu) 
 =  \sup_{ (\alpha_1,\alpha_2,0,s, \nu) \in \mathcal{S}'} \mathcal{F}((\alpha_1,\alpha_2,0),s,\nu).$$
\end{proposition}
\begin{proof}
We claim that for any $((\alpha_1,\alpha_2,\alpha_3),s,\nu) \in \mathcal{S}'$ such that $\alpha_2 \geq \frac{A-1}{2A -B -1}$, we have
\begin{equation} \label{claim} \mathcal{F}((\alpha_1,\alpha_2,\alpha_3),s,\nu) \leq \sup_{\nu \in \mathcal{P}(\R)} \mathcal{F}((\alpha_1,\alpha_2+\alpha_3,0),\nu).\end{equation}
Note that

$$ \sup_{\nu \in \mathcal{P}(\R)} \mathcal{F}((\alpha_1,\alpha_2+\alpha_3,0),\nu) = \theta^2(\alpha_1+2\alpha_1\alpha_2 +2 \alpha_1\alpha_3) +\theta^2 A (\alpha_2+\alpha_3)^2 + \frac{1}{2} \log \alpha_1.$$
Now, for any $((\alpha_1,\alpha_2,\alpha_3),s,\nu) \in \mathcal{S}'$, using the fact that $\psi(x) \leq A/2$ for any $x\in \R$, we have
$$ \mathcal{F}((\alpha_1,\alpha_2,\alpha_3),s,\nu) \leq  \theta^2(\alpha_1 + 2\alpha_1\alpha_2) + \theta^2A\alpha_2^2 +2\theta^2 A\alpha_1 \alpha_3 + \theta^2B(\alpha_3^2 +2 \alpha_2 \alpha_3)+ \frac{1}{2} \log \alpha_1.$$
Therefore, it suffices to prove that for $\alpha_2$ sufficiently near 1:
$$ (A-B) ( 2 \alpha_2 \alpha_3 + \alpha_3^2) \geq 2 (A-1) \alpha_1 \alpha_3$$
This is true if 
$$
 2(A-1) \alpha_1 \leq (A -B) (\alpha_3 + 2 \alpha_2).$$
A sufficient condition for the  inequality to be true is that $(A-1) (1-\alpha_2)  \leq (A-B)\alpha_2$, which ends the proof of the claim \eqref{claim}. 
By Lemma \ref{cond}, we know that for $\theta \geq \theta_0$,
$$ \sup_{ (\underline{\alpha},s, \nu) \in \mathcal{S}'} \mathcal{F}(\underline{\alpha},s,\nu)  =  \sup_{ (\underline{\alpha},s, \nu) \in \mathcal{S}' \atop \alpha_1,\alpha_3 \leq C\sqrt{\log \theta} /\theta} \mathcal{F}(\underline{\alpha},s,\nu).$$
Hence, for $\theta$ such that 
$$1-2C \frac{\sqrt{\log \theta}}{\theta} \geq \frac{A-1}{2A -B -1},$$
we obtain  \eqref{claim}. \end{proof}
We can now conclude  from the last two Propositions  \ref{simpleFstep} and \ref{simpleFstep2},   that
for $\theta\ge \theta_{0}$
$$\overline{F}(\theta)\le  \sup_{ ((\alpha_1,\alpha_2,0),s, \nu) \in S} \mathcal{F}(\alpha_1,\alpha_2,0,s,\nu)= \sup_{\alpha \in [0,1) } V(\alpha)$$
where we  optimized over $\nu$ (at the centered Gaussian law with covariance $\alpha_1$). This completes the proof of the proof of Proposition \ref{simpleFcompact}  with Lemma \ref{lemlowerbound}.

\section{Delocalization and localization of the eigenvector of the largest eigenvalue}\label{section:loc}
In this section we consider a unit eigenvector $u_{X_{N}}$ associated to the largest eigenvalue of $X_N$, conditioned to deviate towards a large value. We assume hereafter that $\mu$ is compactly supported, allowing us to use the sub-Gaussian concentration property of the titled measure $\Pp^{(e,\theta)}$, as defined in \eqref{deftilt}.

We first show when $X_N$ has sharp sub-Gaussian tails, $u_{X_N}$ stays close to the set of  delocalized vectors.  Then, we show that in the case where $\mu$ is not sharp sub-Gaussian,  $u_{X_{N}}$ is close to a set of localized vectors in the sense that it contains about $\sqrt{N}$ entries of order $N^{{-1/4}}$, the other being much smaller. It should be possible to consider as well the case where $\psi$ is increasing, and we then expect that the eigenvector would localize over one entry. However, this would require more effort to obtain the required exponential estimates and we postpone this research to further investigations.

We denote by $d_{2}$ the Euclidean distance in $\mathbb R^{N}$: for a  subset $A$ of $\mathbb R^{N}$ and $u\in \mathbb R^{N}$ we set
$$d_{2}(u, A)=\inf\{\|u-v\|_{2}:v\in A\}\,.$$
\begin{proposition}\label{delocEV}
Assume that $\mu$ has sharp sub-Gaussian tail and  is compactly supported.  Let $\varepsilon > 0$ and define the set of delocalized vectors $D_{\eps}$  by:
$$D_{\eps} := \{ e \in \mathbb S^{N-1}: \forall i \in \{1,\ldots, N\}, \  |e_i| \leq \eps N^{1/4}\}.$$  There exists  a function $\eta(x)$ that goes to zero when $x$ goes  to $+\infty$ such that
$$\lim_{\eps\rightarrow 0}\lim_{\delta\ra 0}\lim_{N\rightarrow\infty} \mathbb P\left( d_{2}(u_{X_N},B_{\eps}) \leq \eta(x)\big| |\lambda_{X_N}-x|\le \delta\right)=1\,.$$
\end{proposition}
For any  symmetric matrix $X$, we denote by $u_{X}$ a unit eigenvector associated to the largest eigenvalue. For any $\chi \in(0,1)$, we set:
$${A_{\chi}=\{ \sup_{e\in  D_{\eps} } |\langle u_{X_N},e \rangle| \leq 1 - \chi\}}\,.$$
Let $x>2$ and $\theta_x \geq 1/2$ such that $x = 2\theta_x + 1/2\theta_x$. We know from \cite[Section 5]{HuGu}  that  under the measure $\Pp_{\theta_x,N}$ defined by,
$$ d\Pp_{\theta_x,N} = \frac{I_N(\theta_x,X_N)}{\E I_N(\theta_x, X_N)} d\Pp(X),$$
 for  $\delta,\gamma>0$, $N,M$ large enough,  if $V_{\delta,x}^M = \{  |\lambda_{X_N} -x|<\delta, d(\hat\mu_{X_N},\sigma) <N^{-\gamma}, ||X_N||\leq M \}$

\begin{equation} \label{convtilt} {\Pp_{\theta_x,N}(V_{\delta,x}^M) \geq \frac{1}{2}}.\end{equation}
By \eqref{lbtilt} we know that
$$ \Pp\big(   |\lambda_{X_N}-x|\le \delta\big) \geq e^{-N(J(\theta_x,x) -F(\theta_x)+o_\delta(1))} \Pp_{\theta_x,N}( V_{\delta,x}^M).$$
 Similarly, we have
$$ \mathbb P\big(  V_{\delta,x}^M \cap A_\chi\big) \leq e^{-N(J(\theta_x,x) -F(\theta_x) -o_{\delta}(1))} \Pp_{\theta_x,N}( A_\chi \cap V^M_{\delta,x}).$$
Using \eqref{convtilt} and assumption \ref{ass} we find $f(M) \to +\infty$ when $M\to +\infty$ so that
$$ \mathbb P\left(  A_\chi\big | |\lambda_{X_N}-x|\le \delta\right) \leq 2 \Pp_{\theta_x,N}\big(  A_\chi\cap V^M_{\delta,x}\big) + e^{-N f(M)}.$$
 Using the lower bound $\log \E I_N(X_N,\theta_x) \geq N\theta_x^2 -o(N)$, we deduce
\begin{equation} \label{eqdeloc} \mathbb P\left(  A_\chi\big | |\lambda_{X_N}-x|\le \delta\right) \leq 2e^{ -N \theta_x^2} \mathbb E_e[ \mathbb E_X[\Car_{\{ A_\chi\cap V^M_{\delta,x} \}}e^{N\theta_x\langle e,X_{N} e\rangle}]] + e^{-Nf(M)},\end{equation}
Let $\kappa\in(0,1)$ and define the set $D_{\varepsilon,\kappa}=\{ e \in\mathbb{S}^{n-1}: \sum_{i,j} e_i^2 e_j^2 \Car_{\sqrt{N} |e_i e_j|>\varepsilon^2/4}> \kappa\}$. Since we assumed that $X_N$ has sharp sub-Gaussian tails, we have that  $r_\varepsilon=\inf_{y\ge \varepsilon^2/4}(\psi(0)-\psi(y)) >0$. Therefore, for any ${e\in D_{\eps,\kappa}}$,
$$\sum_{i=1}^N L(\sqrt{2N} \theta_x e_i^2) + \sum_{i< j} L(2 \sqrt{N} \theta_x e_i e_j) -N\theta_x^2\le -\theta_x^2 r_\varepsilon \kappa N.$$
We deduce that
\begin{equation}\label{tiltDe} e^{-\theta_x^2 N}  \mathbb E_e[ \Car_{e\in D_{\varepsilon,\kappa}}\mathbb E_X[e^{N\theta_x\langle e,X_{N} e\rangle}]]\le e^{-\theta_x^2r_\varepsilon \kappa N}\,.\end{equation}
On the other hand, observe that for $e\in D_{\varepsilon,\kappa}^c$,
$$\Big(\sum_{i=1}^N e_i^2 1_{|e_i|\ge \varepsilon N^{-\frac{1}{4}}/2}\Big)^2\le \sum_{i,j =1}^N e_i^2 e_j^2 1_{\sqrt{N} |e_i e_j|>\varepsilon^2/4}\le \kappa\,.$$
Therefore if we let $\bar e_i=\mbox{sgn}(e_i)\min\{|e_i|, \varepsilon  N^{-1/4}/2\}$, we have that
$$\left|\langle e, X e\rangle-\langle  \bar e, X \bar e\rangle\right|\le 2\|X\| \sqrt{\kappa}\,.$$
Thus, we can write 
$$ \mathbb E_e[ \Car_{e\in D_{\varepsilon,\kappa}^c}\mathbb E_X[\Car_{A_{\chi}\cap V_{\delta,x}^M}e^{N\theta_x\langle e,X_{N} e\rangle}]] \le 
e^{2\theta_xM\sqrt{\kappa} N}  \mathbb E_e[\Car_{e\in D_{\varepsilon,\kappa}^c} \mathbb E_X[\Car_{\{A_\chi\cap V^M_{\delta,x}\}}e^{N\theta_x\langle \bar e,X \bar e\rangle}]].$$
But for $e\in D_{\varepsilon,\kappa}^c$,
$$\mathbb E_X[e^{N\theta_x\langle \bar e,X \bar e\rangle}]\leq e^{\theta_x^2 N\|\bar e\|_2^2}\leq e^{\theta_x^2 N} , $$
which implies that
\begin{equation}\label{zs}
 \mathbb E_e[ \Car_{e\in D_{\varepsilon,\kappa}^c}\mathbb E_X[\Car_{A_{\chi}\cap V_{\delta,x}^M}e^{N\theta_x\langle e,X_{N} e\rangle}]\le  e^{(2M\sqrt{\kappa} -\theta_x^2)N}
  \mathbb E_e[\Car_{e\in D_{\varepsilon,\kappa}^c} \Pp^{(\bar e ,\theta_x)}(A_\chi)]\end{equation}
where 
\begin{equation} \label{defmesuretilt} \Pp^{(\bar e ,\theta_x)} =   \frac{ e^{N\theta_x\langle \bar e,X \bar e\rangle}}{\mathbb E_X[e^{N\theta_x\langle \bar e,X \bar e\rangle}]}d\mathbb P(X).\end{equation}
We can conclude from \eqref{eqdeloc}, \eqref{tiltDe} and \eqref{zs} that
\begin{align*}
\mathbb P\left(  A_\chi\big | |\lambda_{X_N}-x|\le \delta\right) &\leq 2e^{2N\theta_xM\sqrt{\kappa}}   \mathbb E_e[\Car_{e\in D_{\varepsilon,\kappa}^c} \Pp^{(\bar e ,\theta_x)}(A_\chi)] \nonumber \\
&+ 2e^{- N\theta_x^2r_\eps \kappa}+e^{-Nf(M)}.\label{eqdeloc}
\end{align*}
Hence, it is sufficient to  complete the proof of Proposition \ref{delocEV} to prove the following:
\begin{lemma}\label{deloc} There exists a numerical constant $C>0$  and a positive function $h$, such that for $\kappa$ small enough, $N$ large enough, $\chi \geq C\eps^2$ and any $e \in D_{\eps,\kappa}^c$,  
$$ \Pp^{(\bar e ,\theta_x)}(A_{\chi})\leq e^{-Nh(\chi)}.$$
\end{lemma}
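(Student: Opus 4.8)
The statement to prove is Lemma~\ref{deloc}: under the tilted measure $\Pp^{(\bar e,\theta_x)}$, for $e\in D_{\eps,\kappa}^c$ and $\chi\gtrsim\eps^2$, the event $A_\chi=\{\sup_{f\in D_\eps}|\langle u_{X_N},f\rangle|\le 1-\chi\}$ has exponentially small probability. The guiding intuition is that $\bar e$ is itself a delocalized vector ($\|\bar e\|_\infty\le\eps N^{-1/4}/2$), and the tilt by $\theta_x N\langle\bar e,X\bar e\rangle$ pushes the top eigenvector of $X_N$ to align with $\bar e$; since $\bar e$ and $e$ are close (their difference is controlled by $\sqrt\kappa$ on $D_{\eps,\kappa}^c$), and $e$ is within $2\eps$ or so in $\ell^2$ of the rescaled-and-truncated delocalized vector, the top eigenvector must have large overlap with some element of $D_\eps$, contradicting $A_\chi$ for $\chi$ chosen appropriately.

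**Key steps.** First I would use the sub-Gaussian concentration property of the tilted measure $\Pp^{(\bar e,\theta_x)}$ — which is available because $\mu$ is compactly supported, as announced at the start of Section~\ref{section:loc} — to show that under this measure $\langle \bar e, X_N\bar e\rangle$ concentrates around its mean, and more importantly that $\lambda_{X_N}\ge \langle \bar e,X_N\bar e\rangle$ is, with overwhelming probability, at least $2\theta_x+\tfrac{1}{2\theta_x}-o(1)=x-o(1)$ (this is the standard computation: tilting by $\theta_x N\langle\bar e,X\bar e\rangle$ with $\bar e$ nearly a unit vector makes $\langle\bar e,X_N\bar e\rangle$ concentrate near $2\theta_x$, using $L(t)=\tfrac12 t^2+o(t^2)$ at the relevant scale since $\|\bar e\|_\infty$ is small; the diagonal terms are negligible). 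Second, I would invoke the exponential tightness of $\|X_N\|$ under the tilted measures (Remark~\ref{tensionexporadius}) so that on a set of overwhelming probability $\|X_N\|\le K$ for $K$ large, and also the concentration of $\hat\mu_{X_N}$ at the scale $N$ (Assumption~\ref{ass}), so that $d(\hat\mu_{X_N},\sigma)$ is small. Third — this is the analytic heart — I would use a rigidity/perturbation argument: if $\lambda_{X_N}\ge x-o(1)$, if $\hat\mu_{X_N}$ is close to $\sigma$ (so the second-largest eigenvalue is $\le 2+o(1)<x$, meaning there is a spectral gap), and if $\langle\bar e,X_N\bar e\rangle\ge x-o(1)$, then the top eigenvector $u_{X_N}$ must have overlap $|\langle u_{X_N},\bar e\rangle|^2\ge 1-\eta$ with $\eta\to 0$. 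This is because $x-o(1)\le\langle\bar e,X_N\bar e\rangle=\sum_i\lambda_i\langle\bar e,u_i\rangle^2$, and with the gap the only way this sum can be $\ge x-o(1)$ is if almost all the mass of $\bar e$ is on $u_{X_N}$. Finally, since $|\langle u_{X_N},\bar e\rangle|$ is large and $\bar e$ (after renormalizing to a unit vector) lies in $D_{2\eps}$ — here one uses $\|\bar e\|_\infty\le\eps N^{-1/4}/2$ and $\|\bar e\|_2^2\ge 1-\kappa$ on $D_{\eps,\kappa}^c$ — we get $\sup_{f\in D_\eps}|\langle u_{X_N},f\rangle|\ge 1-C\eps^2-o(1)>1-\chi$ for $\chi\ge C\eps^2$, so $A_\chi$ fails. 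Assembling: $\Pp^{(\bar e,\theta_x)}(A_\chi)\le \Pp^{(\bar e,\theta_x)}(\lambda_{X_N}<x-o(1))+\Pp^{(\bar e,\theta_x)}(\|X_N\|>K)+\Pp^{(\bar e,\theta_x)}(d(\hat\mu_{X_N},\sigma)>N^{-\gamma})$, each of which is $\le e^{-Nh(\chi)}$ for a suitable $h$, uniformly over $e\in D_{\eps,\kappa}^c$.

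**Uniformity and the main obstacle.** The delicate point is that the estimate must be \emph{uniform} over all $e\in D_{\eps,\kappa}^c$; the tilt vector $\bar e$ depends on $e$, so one cannot simply quote a single large deviation lower bound. The fix is that every $\bar e$ arising this way is delocalized with $\|\bar e\|_\infty\le \eps N^{-1/4}/2$ and $1-\kappa\le\|\bar e\|_2^2\le 1$, and the sub-Gaussian concentration inequality for $\Pp^{(\bar e,\theta)}$ — together with the expectation estimate $\E_X\langle\bar e,X_N\bar e\rangle\to 2\theta_x\|\bar e\|_2^4$ under the tilt, valid because $L(t)=\tfrac12 t^2+O(t^4)$ and the fourth-order corrections are $O(\eps^2)$ uniformly by delocalization — gives bounds depending on $e$ only through $\|\bar e\|_2$ and $\|\bar e\|_\infty$, hence uniform over $D_{\eps,\kappa}^c$ up to the $C\eps^2$ slack already present in the statement. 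The main work is therefore in making the perturbation-theoretic step (gap $\Rightarrow$ eigenvector concentration) quantitative with explicit $\eta(\cdot)$ and in tracking that all the $o(\cdot)$ and error terms are genuinely uniform in $e$; the large-deviation bookkeeping itself is routine given Remark~\ref{tensionexporadius} and Assumption~\ref{ass}.
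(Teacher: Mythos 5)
There is a genuine gap, and it sits at what you call the analytic heart of your argument. Under $\Pp^{(\bar e,\theta_x)}$ the entries have mean $\E^{(\bar e,\theta_x)}X_{ij}\simeq 2\theta_x\bar e_i\bar e_j$, so the quadratic form $\langle\bar e,X_N\bar e\rangle$ concentrates around $2\theta_x\|\bar e\|_2^4\simeq 2\theta_x$, \emph{not} around $x=2\theta_x+\tfrac{1}{2\theta_x}$ as you claim. The extra $\tfrac{1}{2\theta_x}$ in $\lambda_{X_N}$ is precisely the BBP shift produced by the interaction of the rank-one tilt with the bulk; it is invisible to the quadratic form evaluated at $\bar e$ alone. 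Once this is corrected, your spectral-gap variational step only gives, writing $a=|\langle u_{X_N},\bar e\rangle|^2$ and $\lambda_2$ for the second eigenvalue, $2\theta_x-o(1)\le \langle\bar e,X_N\bar e\rangle\le \lambda_{X_N}a+\lambda_2(1-a)$, hence $a\ge (2\theta_x-\lambda_2-o(1))/(x-\lambda_2)$, whose deficit from $1$ is of order $1/\theta_x^2$ — an $x$-dependent quantity that does not go to zero with $\eps$ or $\chi$. This cannot contradict $A_\chi$ for $\chi$ of order $\eps^2$; at best it gives a statement with threshold $\chi\gtrsim \eta(x)$ for large $x$, and you would have to reorganize the whole proof around this $x$-dependence. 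A second, independent problem is your "spectral gap" input $\lambda_2\le 2+o(1)$ with exponentially small error: Assumption \ref{ass} only controls macroscopic proportions of eigenvalues (it cannot rule out a second outlier), and Remark \ref{tensionexporadius} only gives $\|X_N\|\le K$ for a large constant $K$; neither yields the gap at $2$ you invoke, so this step needs a separate argument (e.g.\ concentration of $\lambda_2$ of the centered part).

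For comparison, the paper's proof avoids both difficulties by exploiting the exact rank-one structure of the tilt: under $\Pp^{(\bar e,\theta_x)}$ one writes $X_N=\hat W+2\theta_x\bar e\bar e^T$ with $\hat W$ centered up to an $O(\eps^2)$ perturbation, and uses the exact eigenvector identity $u_{X_N}=(\hat W-\lambda_{X_N})^{-1}\bar e/\|(\hat W-\lambda_{X_N})^{-1}\bar e\|_2$, so that $|\langle u_{X_N},\bar e\rangle|^2=\langle\bar e,(\lambda_{X_N}-\hat W)^{-1}\bar e\rangle^2/\langle\bar e,(\lambda_{X_N}-\hat W)^{-2}\bar e\rangle$. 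The only probabilistic inputs are then exponential tightness of $\|\hat W\|$ at a large constant $K$ and Talagrand's concentration inequality (valid since $\mu$ is compactly supported) for $\lambda_{X_N}$ around its mean, which is identified as $x+O(\eps^2)$ via the BBP transition; no gap at $2$ and no concentration of $\langle\bar e,X_N\bar e\rangle$ are needed, and the resulting overlap bound $1-O(K/x)-O(\delta)$ carries exactly the $x$-dependence that your argument is missing. If you want to salvage your route, you must replace "$\langle\bar e,X_N\bar e\rangle\ge x-o(1)$" by the correct "$\ge 2\theta_x-o(1)$", obtain a genuine exponential bound on $\lambda_2$ (or on $\|\hat W\|$) by a concentration argument, and accept an overlap deficit of order $\eta(x)$ rather than $o(1)$ uniformly in $x$.
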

\begin{proof}We can proceed as in \cite[section 5.1]{HuGu} and observe that under   $\mathbb P^{(\bar e,\theta_x)}$, $X_{N}$ is symmetric and has independent entries with distribution
$$\bigotimes_{i\le j} dP_{N}^{2^{i\neq j}\theta\sqrt{N} \bar e_{i }\bar e_{j}}(X_{ij})$$
where $P_{N}^{\gamma}$ is the law of $x/\sqrt{N}$ under $e^{\gamma x} d\mu(x)/\int e^{\gamma y} d\mu(y)$. Using the fact that $ \bar e_{i }=O(\eps N^{-1/4})$, we see that 
we can write $X_N=W+M_{\bar e,\theta_x}$ where $M_{\bar e,\theta_x}=\mathbb E_{(\bar e ,\theta_x)} [X_{N}]=2\theta_x \bar e \bar e^T+r_{N}$,  with {$\|r_{N}\| = O(\eps^2)$}. We  denote $\hat W=W+r_{N}$. 
{Provided that $\lambda_{X_N} >\lambda_{\hat W}$}, a unit eigenvector $u$ of $X_N$ associated to $\lambda_{X_N}$ satisfies the equation:
$$(\hat W+2\theta_x \bar e\bar e^T )u=\lambda_{X_N} u\Rightarrow (\hat W-\lambda_{X_N})u=2\theta_x \langle u,\bar e\rangle \bar e \Rightarrow u=\frac{(\hat W-\lambda_{X_N})^{-1}  \bar e}{\|(\hat W-\lambda_{X_N})^{-1}  \bar e\|_2}.$$
Therefore 

$$|\langle u, \bar e\rangle |^2= \frac{ \langle \bar e, (\lambda_{X_N}-\hat W)^{-1}\bar e\rangle ^2}{  \langle \bar e, (\lambda_{X_N}-\hat W)^{-2}\bar e\rangle}\,.$$
\begin{lemma}\label{deloctilt}
For $N$ large enough, and for any $e \in D_{\eps,\kappa}^c$, $\delta>C \eps^2$, and $K\geq C$ for some constant $C>0$,
 \begin{equation} \label{conclamdamax} \Pp^{(\bar e,\theta_x)}\big( ||\hat W|| >K \big) \leq e^{-c(K)N},\qquad \Pp^{(\bar e,\theta_x)}\big(| \lambda_{X_{N}} -x|\ge \delta  \big)\le e^{-c(\delta) N},\end{equation}
where $c$ is a positive function increasing to infinity.
Furthermore, for any $\chi \geq C\eps^2$,
$$ \Pp^{(\bar e,\theta_x)}\Big(  |\langle u_{X_N}, \bar e\rangle |^2 \leq 1-\chi\Big) \leq e^{-h(\chi)N},$$
where $h$ is a positive function.
\end{lemma}
\begin{proof} The first statement follows from Remark \ref{tensionexporadius}.  The second  claim is the consequence of  Talagrand's concentration inequality for convex Lipschitz functions (see \cite[Corollary 4.10]{Ledouxmono}) and the fact that  $\E^{(\bar e, \theta_x)}  \lambda_{X_N} = x + O(\eps^2)$. Indeed, note that $W$ is a centered random symmetric matrix with independent entries above the diagonal with variance close to $1/N$. It is known as the BBP transition, (see \cite{BBP}, \cite{BGM} for example), that $\lambda_{W + 2\theta_x \bar e \bar e^T}$ converges to $x$, almost surely and in expectation. Since $|| r_N|| =O(\eps^2)$, we deduce that for $N$ large enough, ${\E^{(\bar e, \theta_x)} \lambda_{X_N} = x + O(\eps^2)}$.

Let now $x>2$, $K\geq 1$ and $\delta\in(0,1)$ such that $x-\delta>2K$. We have on the event $V=\{ ||\hat W||\leq K, \ |\lambda_{X_N} - x| \leq \delta\}$, we have
$$\Big| |\langle u_{X_N}, \bar e\rangle |^2-  v_{N}\big| \leq C \delta, \text{ where } v_{N}=\frac{ \langle \bar e, (x-\hat W)^{-1}\bar e\rangle ^2}{  \langle \bar e, (x-\hat W)^{-2}\bar e\rangle}\,,$$
and $C$ is a numerical constant. 
Moreover, one can check that on the event $V$,
$$ | v_N- 1|\leq \frac{C'K}{x},$$
where $C'>0$ is a numerical constant.
Therefore, for $\chi \leq CK/x+C'\delta$,
$$ \Pp^{(\bar e,\theta_x)}\Big( \big| |\langle u_{X_N}, \bar e\rangle |^2 -1  \big|>\chi\Big) \leq \Pp^{(\bar e,\theta_x)}(W^c),$$
which gives the claim by an appropriate choice of $K$ and $\delta$.
%
\end{proof}
To conclude the proof of Lemma \ref{deloc}, note that for $e \in D_{\eps, \kappa}$, we have $\bar e/||\bar e|| \in D_{\eps/2\sqrt{1-\kappa}}$. For $\kappa$ small enough, $\bar e/||\bar e|| \in D_{\eps}$ and we have:
$$ \Pp^{(\bar e ,\theta_x)}(A_{\chi})\leq \Pp^{(\bar e ,\theta_x)}\Big(  |\langle u_{X_N}, \bar e\rangle |^2 -1 \big|>\chi \Big),$$
which, using  Lemma \ref{deloctilt}, ends the proof.
\end{proof}

We next consider what happens when  $\mu$ is compactly supported and is not sharp sub-Gaussian. We shall prove that in this case, at least when we condition by deviations of the largest eigenvalue close to $x$ large, the associated eigenvector becomes close to the set
$${\mbox{Loc}_{r_{1},r_{2},\varepsilon}=\Big\{ e\in \mathbb S^{N-1}: |I_{r_2,\eps}(e)| \in [(r_1 -\eps)\sqrt{N}, (r_{1}+\eps)\sqrt{N}], \ \forall i \notin I_{r_2,\eps}(e), \ |e_i|\leq \eps N^{-\frac{1}{4}}\Big\}},$$
where for any $e\in \mathbb{S}^{N-1}$, $I_{r_2,\eps}(e)=\big\{ i: \frac{N^{-\frac{1}{4}}|e_i|}{\sqrt{r_2}} \in  [1-\varepsilon, 1+\varepsilon]\big\}$.

\begin{proposition}
Assume that $\mu$ is compactly supported and $\psi$ achieves its maximum  at a unique point $m_{*}$ where it is strictly concave. {For any $x\geq x_\mu$, let $v_{x}=2\theta_{x}/m_{*}$, where $x= 1/2\theta_x + F'(\theta_x)$}. 
$$\lim_{\varepsilon\rightarrow 0}\lim_{\delta\rightarrow 0}\lim_{N\rightarrow\infty} \mathbb P\big( d_{2}(u_{X_{N}},\mbox{Loc}_{v_{x},1/v_x,\varepsilon}) \leq c(x)\big| |\lambda_{X_N}-x|\le \delta\big)=1\,,$$
where $c$ is a function going to $0$ as $x$ goes to $+\infty$.
\end{proposition}
\begin{proof}
As in  \eqref{jh} we can replace the conditioning by  the tilt by spherical integrals of parameter $\theta=\theta_{x}$ large. We then can use Lemmas \ref{cond} and \ref{concmu2} to see that up to exponentially small probability we can restrict the integration  over a $\delta$ neighborhood of 
$$  
 \alpha_{2}^{\theta}\ge 1-\frac{C\sqrt{\log\theta}}{\theta^{2}},\ \alpha_{1}^{\theta}\le \frac{C\log \theta}{\theta^2}, \ \alpha_{3}=0,$$
and
$ \mu_2 =\delta_{\sqrt{m_{*}}}\,.$
Here, $\alpha_{j}=\sum_{i:\sqrt{N} e_{i}\in I_{j}} e_{i}^{2}$ and 
$\mu_{2}=\frac{1}{\alpha_{2 }N }\sum_{\sqrt{N}e_{i}\in I_{2}}^{l}e_{i}^{2}\delta_{\sqrt{2\theta} N^{1/4} e_{i}}\,.$ Indeed, this is a  simple consequence of the estimate of Lemma \ref{cond} which implies that we can restrict ourselves  to the space $\mathcal A_{\theta}\times \mathcal B_{\theta}$ when we estimate  the annealed free energy up to exponentially small errors. 
Note then that  $\mu_{2}$ is compactly supported  and the rate function  is smooth in $\mu_{2}$ so that we can cover the integration over $\mu_{2}$ by finitely many balls: this implies that we can restrict ourselves to a neighborhood of the minimizer $\mu_{2}=\delta_{m_{*}}$ and $\alpha_{3}=0$. Note that this implies that with exponentially large probability, the uniform vector in the annealed spherical integral belongs to $\mbox{Loc}_{r_{1},r_{2},\varepsilon}$ with $r_{2}=m_{*}/2\theta, r_{1 }r_{2}=\alpha_{2}$, hence $r_{1}=2\theta \alpha_{2}/m_{*}$. Since $\alpha_{2}$ goes to one as $\theta$ (hence $x$) goes  to infinity, we retrieve the fact that $r_{1}r_{2}$ goes to one.
Hence, as in the delocalized case we can write for every $x \geq x_0$ that for every event $E$

\[ \Pp[ E | |\lambda_{X_N}- x| \leq \delta] = \exp( (o(\varepsilon) + o(\kappa) + o(\delta))N) \E_{e}[ \mathds{1}_{ e \in {\mbox{Loc}_{r_1,r_{2},\varepsilon}} }\Pp^{(\bar{e},\theta)}[E]] \]
where for $e \in \mbox{Loc}_{r_1,r_{2},\varepsilon}$, we denoted $\bar{e}$ the vector such that all the entries which are close to $r_{2}N^{-1/4}$ are equal to this value and all the others are smaller than $\varepsilon N^{-1/4}$. Again, note  that under $\Pp^{(\bar{e},\theta)}$, we have
$$X_{N}=W+M_{\bar e, \theta}$$
where $M_{\bar e,\theta}=\mathbb{E}^{(\bar e,\theta)}[X_{N}]=2\theta (\bar e\bar e^{T}-\bar e_{1}\bar e_{1}^{T})+L_{N}+R_{N}$. $R_{N}$ is a matrix with negligible spectral radius, $\bar e_{1}$ is the restriction of $\bar e$ to the entries of order $\sqrt{r_{2}}N^{-1/4}$  and ${L_{N}=(L'( \theta \bar e_1(i) \bar e_1(j))_{i,j}}$. We may assume without loss of generality that they are the first $l=r_{1}\sqrt{N}$ indices and then $L_{N}$ is a $l\times l$ matrix with constant entry $\tilde m/\sqrt{N}$ where
$$\tilde m=\frac{\mathbb E [ xe^{2\theta r_{2}x}]}{\mathbb E [ e^{2\theta r_{2}x}]}=L'(2\theta r_{2})=L'(m_{*})\,.$$
$L_{N}$ has rank one, with non-zero eigenvalue equal to $\gamma=\tilde m r_{1}=2\theta L'(m_{*})\alpha_{2}/m_{*}$ and corresponding eigenvector $v=\frac{1}{\sqrt{\alpha_{2}}}\bar e_{1}$. Recalling that $m_{*}$ is a critical point of $\psi$, we find that
$$\frac{L'(m_{*})}{m_{*}^{2}}=\frac{2L(m_{*})}{m_{*}^{3}}\Rightarrow \frac{L'(m_{*})}{m_{*}}=2\psi(m_{*})= A\,,$$
so that $\gamma=2\theta A \alpha_{2}$.
Note that $\bar e= \sqrt{\alpha_{2}} v+ \sqrt{1-\alpha_{2}}w$ with $w$ a unit vector orthogonal to $v$. We then have up to a small error
$$M_{\bar e, \theta}=2\theta ((1-\alpha_{2})ww^{T}+\sqrt{\alpha_{2}(1-\alpha_{2})}(vw^{T}+w v^{T}))+2\theta A\alpha_{2} vv^{T}$$
We see that as $x$ goes to infinity, $\alpha_{2}$ goes to one and the largest eigenvalue of $M_{\bar e,\theta}$ goes to $2\theta A$. More generally, the largest eigenvalue of $M_{\bar e,\theta}$ is given by
$$\lambda_{2}^{\theta}=\theta\left( (A+1)\alpha_{2}+\sqrt{(A+1)^{2}\alpha_{2}^{2}-4(A-1)\alpha_{2}(1-\alpha_{2})}\right)$$ 
and eigenvector $v^{\theta}_{2}$ converging to $v$ when $\theta$ goes to infinity. 
Then, by the BBP transition \cite{BBP, rao} the largest eigenvalue of $X_{N}$ is given by $K_{\sigma}(\lambda_{2}^{\theta})$ with $K_{\sigma}(x)=x+x^{{-1}}$. We therefore conclude that the optimal coefficients $\theta,\alpha_{2}$ must satisfy
$x= K_{\sigma}(\lambda_{2}^{\theta})$.
At the same time, denoting by $v_{1}^{\theta}$ the second eigenvector of $M_{e,\theta}$, we find \cite{rao}:

\[  M_{e,\theta} = \lambda^{\theta}_1 v_1^{\theta}(v_1^{\theta})^T+ \lambda^{\theta}_2 v_2^{\theta}(v_2^{\theta})^T +o_{N}(1)\]
 where $ \lambda_1^{\theta} < \lambda_2^{\theta}$ and $v_1^{\theta},v_2^{\theta}$ are orthogonal unit vectors. 
Denoting $u_{X_{N}}$ the eigenvector associated to the largest eigenvalue $\lambda_{X_N}$ of $X_{N}$ which is close to $x$, we deduce  \cite{rao} that 

\[ u_{X_{N}} =C\left( \lambda_1^{\theta}\langle v_1^{\theta}, u \rangle  ( \lambda_{X_N}-W)^{-1} v_1^{\theta}  + \lambda_2^{\theta} \langle v_2^{\theta}, u_{X_{N}} \rangle( \lambda_{X_N} -W)^{-1}  v_2^{\theta} \right)\]
where $C$ is the constant such that $u$ is a unit vector.
In expectation, the isotropic law shows that $\langle v_{2}^{\theta},  ( \lambda_{X_N} -W)^{-1} v_1^{\theta}\rangle$ goes to zero. Hence, again by concentration  of measure arguments we see that up to events with exponentially small probability 
$$|\langle u, v_{2}^{\theta}\rangle|^{2}= -\frac{G_{\sigma}^{2}(x)}{G'_{\sigma}(x)}+o(1)$$
Notice that the right hand side goes to one as $x$ goes to infinity.
Since when $x$ goes to infinity, $\theta_{x}$ goes to infinity, $\alpha_{2}$, $v_{2}^{\theta}$ goes to $v$ which is the renormalized vector with $r_{1}\sqrt{N}$ non vanishing entries,  and 
$r_{1}= 2\theta \alpha_{2}/m_{*}$ the conclusion follows.

\end{proof}

\section{Appendix}
\subsection{Concentration for Wigner matrices with sub-Gaussian log-concave entries}\label{concappendix}
In this section we show that Assumption \ref{ass} do not require to have compact support or log-Sobolev inequality as assumed in \cite{HuGu}. This hypothesis for instance would not include sparse Gaussian variables, whereas the following proposition handles this case.
\begin{proposition}\label{concWig}\cite{GZ,AGZ} Let $\mu$ be a symmetric probability measure on $\R$ which has log-concave tails in the sense that $t \mapsto \mu( x : |x| \geq t)$ is concave, and which is sub-Gaussian in the sense that \eqref{subGauss} holds.
Let $X_N$ be a symmetric random matrix of size $N$ such that $(X_{i,j})_{i\leq j}$ are independent random variables. Assume $\sqrt{N} X_{i,j}$ and $\sqrt{N/2} X_{i,i}$ have law $\mu$ for any $i\neq j$. There exists a numerical constant $\kappa >0$ such that for any convex $1$-Lipschitz function $f : \R \to \R$, and $t\geq 0$,
\begin{equation}\label{concstatline} \Pp\Big( \big| \frac{1}{n} \tr f(X_N) - \frac{1}{n} \E \tr f(X_N) \big|>t \Big) \leq 2e^{-\frac{\kappa}{A} N^2 t^2}.\end{equation}
Moreover, for any $t>0$,
\begin{equation}\label{concvalp}  \Pp\big( | \lambda_{X_N} - \E \lambda_{X_N} |>t \big) \leq 2e^{-\frac{\kappa}{A} N t^2}.\end{equation}
One can take $\kappa = 1/8\beta^2$ with $\beta = 1680e$.
\end{proposition}
From these concentration inequalities, one can deduce  as in the Appendix of \cite{HuGu} that a Wigner matrix with entries having sub-Gaussian and log-concave laws satisfy Assumptions \ref{ass}. 
\begin{corollary}
Assume  $\mu$ satisfies the assumptions of Proposition \ref{concWig} and has variance $1$. Then the matrix $X_N$ satisfies Assumptions \ref{ass}.
\end{corollary}
We now prove Proposition \ref{concWig}. It will be a direct consequence of Klein's lemma (see \cite[Lemma 4.4.12]{AGZ}) and the following concentration of convex Lipschitz functions under $\mu^n$.
\begin{proposition}
Let $\mu$ be a symmetric probability measure on $\R$ which has log-concave tails in the sense that $t \mapsto \mu( x : |x| \geq t)$ is concave, and which is sub-Gaussian in the sense that \eqref{subGauss} holds. For any lower-bounded convex $1$-Lipschitz function $f : \R\to \R$ such that $\int f d\mu^n =0$ and any $t>0$,
$$ \mu^n \big(  x : |f(x)| >t  \big) \leq 2e^{-\frac{t^2}{4\beta^2 A}},$$
where $\beta$ is numerical constant. One can take $\beta = 1680 e$.
\end{proposition}
\begin{proof}
By \cite[Corollary 2.2]{SST}, we know that there exists a numerical constant $\beta$ such that $\mu^n$ satisfies a convex infimum convolution inequality with cost function $\Lambda^*(./\beta)$, where $\Lambda^*$ is the Legendre transfom of $\Lambda$ defined by,
$$ \forall \theta \in \R^n, \ \Lambda(\theta) = \log \int e^{\langle \theta, x\rangle } d\mu^n(x).$$ 
Moreover, $\beta$ can be taken to be $1680 e$. More precisely, for any convex lower-bounded function $f : \R \to \R$,
\begin{equation}\label{infconv} \Big( \int e^{f \Box \Lambda^*(./\beta)} d\mu^n \Big) \Big( \int e^{-f} d\mu^n\Big) \leq 1,\end{equation}
where $\Box$ denotes the infimum convolution operator, defined by
$$ f \Box \Lambda^*(./\beta) (x) = \inf_{ y  \in \R^n} \big\{ f(y) + \Lambda^*\Big( \frac{y-x}{\beta}\Big)\big\}.$$
Since $\mu$ is sub-Gaussian in the sense of \eqref{subGauss}, for any $x \in \R^n$,
$$ \Lambda^*(x) \geq \frac{1}{2A} ||x||^2,$$
where $||\ ||$ denotes the Euclidean norm in $\R^n$. Therefore,
$$ f \Box \Lambda^*(./\beta) (x)\geq  \inf_{ y  \in \R^n} \big\{ f(y) +  \frac{1}{2\beta^2 A} ||y-x||^2\big\}.$$
Assume $f$ is $L$-Lipschitz for some $L>0$. Reproducing the arguments of \cite[section 1.9, p19]{Ledouxmono} we have for any $x\in \R^n$,
\begin{align*}
 f \Box \Lambda^*(./\beta) (x)& \geq f(x) + \inf_{y \in\R^n} \big\{ - L||y-x|| + \frac{1}{2\beta^2 A} ||y-x||^2\big\}\\
& \geq f(x) - \frac{1}{2}\beta^2 AL^2.
\end{align*}
Thus, by \eqref{infconv} we deduce that
\begin{equation} \label{Laplacebound} \Big( \int e^{f} d\mu^n \Big) \Big( \int e^{-f} d\mu^n\Big) \leq e^{\frac{1}{2}\beta^2 AL^2}.\end{equation}
Assume now that $f$ is $1$-Lipschitz and $\int f d\mu^n = 0$. Using Jensen's inequality, we get for any $\lambda>0$,
$$\int e^{\lambda f} d\mu^n  \leq e^{\frac{1}{2}\beta^2 A\lambda^2}.$$
Using  Chernoff inequality we deduce that for any $t>0$, 
$$ \mu^n\big( x : f(x) \geq t \big) \leq e^{- \frac{t^2}{4\beta^2 A}}.$$
Using the symmetry in \eqref{Laplacebound} between $f$ and $-f$, we complete  the proof. \end{proof}

\subsection{A Uniform Varadhan's lemma}\label{proofVaradhan}
We prove a quantitative version of  Varadhan's lemma which  is of  independent interest.
\begin{lemma}\label{unifVaradhan}

Let $f : \R \to \R$ such that $f(0) = 0$ and $f(\sqrt{ . })$ is $L$-Lipschitz for some $L>0$. Let $M_N, m_N$ be sequences  such that $M_N = o(\sqrt{N})$ and $m_N=(1+o(1)) N$. Let $g_1,\ldots,g_{m_N}$ be independent Gaussian random variables conditioned to belong to  $[-M_N,M_N]$. Let $\delta\in(0,1)$ and $c>0$ such that $K^{-1} < c <K$ and $2\delta <K^{-1}$. Then,
\begin{align*}\Big|   \frac{1}{N} \log \E e^{\sum_{i=1}^{m_N} f\big( \frac{g_i}{\sqrt{c}}\big) } \Car_{| \sum_{i=1}^{m_N} g_i^2 - cN | \leq \delta N}& - \sup_{\nu \in \mathcal{P}([-M_N, M_N]) \atop \int x^2 d \nu =c} \Big\{ \int f\Big( \frac{x}{\sqrt{c}} \Big) d\nu(x) - H(\nu| \gamma) \Big\}\Big| \\
& \leq   \varepsilon_{L,K}(N) +\varepsilon_{L}(\delta K),\end{align*}
where $\varepsilon_{L,K}(N)\to +\infty$ as $N\to +\infty$ and $\varepsilon_L(x) \to 0$ as $x \to 0$.
\end{lemma}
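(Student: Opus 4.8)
The plan is to prove the uniform Varadhan lemma by combining a Laplace/Sanov-type argument with the explicit Gaussian reference measure, being careful that all error terms are uniform in the relevant parameters. First I would set up notation: write $\gamma_N$ for the standard Gaussian conditioned to $[-M_N,M_N]$ and observe that, since $M_N\to\infty$, the normalization constant $\mathbb P(|g|\le M_N)^{m_N}$ contributes only $e^{o(N)}$, so it can be absorbed into $\varepsilon_{L,K}(N)$ throughout. Then the quantity to estimate is
\begin{equation*}
\frac{1}{N}\log \int_{[-M_N,M_N]^{m_N}} e^{\sum_{i=1}^{m_N} f(x_i/\sqrt{c})}\,\Car_{|\sum_i x_i^2 - cN|\le \delta N}\ \prod_i \frac{e^{-x_i^2/2}}{\sqrt{2\pi}}\,dx_i .
\end{equation*}
The natural object is the empirical measure $\hat L_N=\frac1{m_N}\sum_i \delta_{x_i}$, for which Sanov's theorem (with reference measure $\gamma$ restricted to $[-M_N,M_N]$) gives rate function $\nu\mapsto H(\nu\mid\gamma)$ on $\mathcal P([-M_N,M_N])$; the exponent $\sum_i f(x_i/\sqrt c)=m_N\int f(x/\sqrt c)\,d\hat L_N$ and the constraint $\sum_i x_i^2\approx cN$ becomes $\int x^2\,d\hat L_N\approx c N/m_N = c(1+o(1))$. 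So heuristically the claimed variational formula is exactly what Varadhan's lemma predicts; the work is entirely in making it quantitative and uniform in $c\in(K^{-1},K)$ and in $\delta$.

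For the \emph{lower bound} I would fix a near-optimal $\nu$ with $\int x^2\,d\nu=c$, $\nu\in\mathcal P([-M_N,M_N])$, and restrict the integral to configurations whose empirical measure is close to $\nu$ in a metric compatible with the weak topology \emph{and} with control of the second moment (e.g. a Wasserstein-$2$ type distance, which is legitimate since everything lives on the bounded set $[-M_N,M_N]$). On such configurations $\int x^2\,d\hat L_N$ is within $O(\delta)$ of $c$ up to the $m_N$ vs $N$ discrepancy, so the indicator is satisfied, and $\int f(x/\sqrt c)\,d\hat L_N$ is within $\varepsilon_L(\cdot)$ of $\int f(x/\sqrt c)\,d\nu$ because $f(\sqrt{\cdot})$ is $L$-Lipschitz — this is where the Lipschitz hypothesis on $f\circ\sqrt{\cdot}$ (rather than on $f$ itself) is used: $|f(x/\sqrt c)-f(y/\sqrt c)| = |f(\sqrt{x^2/c})-f(\sqrt{y^2/c})|\le (L/c)|x^2-y^2|$, and $x^2,y^2$ are controlled. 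A standard Gibbs-conditioning / tilted-measure estimate (tilt $\gamma$ by $e^{f(x/\sqrt c)}$ and by the quadratic Lagrange term, then use that the tilted i.i.d.\ product concentrates on its mean empirical measure at speed $N$) produces the matching lower bound $\int f(x/\sqrt c)\,d\nu - H(\nu\mid\gamma) - \varepsilon_{L,K}(N)-\varepsilon_L(\delta K)$. For the \emph{upper bound} I would cover $\mathcal P([-M_N,M_N])$ by finitely many (polynomially-in-$N$ many suffices, which is $e^{o(N)}$) balls in the same metric, use Chebyshev/exponential Markov on each ball against the reference product measure to get $e^{N(\int f\,d\nu_j - H(\nu_j\mid\gamma) + \text{error})}$, and take the max; the constraint indicator forces $\int x^2\,d\nu_j$ to be within $O(\delta)$ of $c$, and a projection/perturbation step (replace $\nu_j$ by a nearby measure with exactly the right second moment, as in the proof of Lemma~\ref{lemmin}) brings it into the constraint set of the variational problem at the cost of another $\varepsilon_L(\delta K)$.

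The main obstacle, and the reason this is stated as a separate lemma rather than a citation, is \textbf{uniformity}: all error functions must depend only on $L,K$ (and $\delta K$), not on $c$, not on the sequences $M_N,m_N$, and not on $\nu$. Concretely, the Lipschitz constant of $x\mapsto f(x/\sqrt c)$ on the region where $x^2\lesssim c$ is $O(L)$ uniformly for $c>K^{-1}$, and the second-moment constraint width $\delta N$ translates to a relative width $O(\delta K)$ after dividing by $c$, which is why the error takes the form $\varepsilon_L(\delta K)$; one has to track these dependencies through the covering argument and the Gibbs-conditioning step without letting a hidden $1/c$ or $1/\delta$ blow up. A secondary technical point is that Sanov-type concentration on $[-M_N,M_N]$ with $M_N\to\infty$ is not on a fixed compact set, so I would either invoke a version of Sanov valid on a common Polish space with a fixed reference measure $\gamma$ (and note $\mathcal P([-M_N,M_N])\subset\mathcal P(\R)$), or truncate at a fixed large level $R$, handle the tail $\gamma(|x|>R)$ crudely (it costs nothing since $f(0)=0$ and $f\circ\sqrt{\cdot}$ Lipschitz gives $|f(x/\sqrt c)|\le (L/c)x^2$, integrable against the Gaussian tail with small contribution), and send $R\to\infty$ at the end. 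Once these uniformity bookkeeping issues are handled, the proof is a routine-but-careful instance of the standard large-deviation Laplace method.
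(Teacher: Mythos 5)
Your route (quantitative Sanov/Varadhan: empirical measure, covering of $\mathcal P([-M_N,M_N])$ by balls in a Wasserstein-type metric, Cram\'er/exponential-Markov upper bound on each ball, Gibbs-conditioning lower bound) is genuinely different from the paper's proof, which is a method-of-types argument: the real line is partitioned into exponentially spaced bins $I_k=\{(1+\eps)^{k-1}\le |x|\le (1+\eps)^k\}$, within which $x^2$ is constant up to a factor $(1+\eps)^2$ (so the Lipschitz property of $f(\sqrt{\cdot})$ gives uniform control of both the integrand and the second-moment constraint), the number of bins is $o(N/\log N)$, and the exact multinomial bounds $(m_N+1)^{-n}e^{-m_NH(y|\gamma_{M_N})}\le \Pp(\mu=y)\le e^{-m_NH(y|\gamma_{M_N})}$ replace any appeal to Sanov, Varadhan, or a law of large numbers. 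This design is not cosmetic: it is exactly what makes all errors depend only on $L$, $K$ and $\delta K$, uniformly in $c$, in $N$, and in the (necessarily $N$-dependent) optimizing measure.

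Your proposal has two concrete gaps. First, the fallback remark that one may truncate at a fixed level $R$ and handle the tail ``crudely'' because $|f(x/\sqrt c)|\le (L/c)x^2$ is wrong in the regime this lemma is used for: a measure putting mass of order $c/M_N^2$ at scale $M_N$ carries an order-one second moment and an order-one contribution to $\int f(x/\sqrt c)\,d\nu$, while its entropy cost is also order one; so the supremum over $\mathcal P([-M_N,M_N])$ genuinely differs from the supremum over $\mathcal P([-R,R])$ (this is precisely the localization mechanism the paper is after), and correspondingly $o(N)$ particles sitting near $M_N$ can shift the exponent by $\Theta(N)$, so weak-topology arguments with tails discarded fail. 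Second, in your main route the lower bound is the delicate step and is only asserted: to control the quadratic integrand you must work with a $W_2$-type metric, but the empirical-measure concentration under $\nu^{\otimes m_N}$ in such a metric degrades with the diameter (in dimension one one only gets bounds of order $M_N^2/\sqrt N$, which need not vanish for general $M_N=o(\sqrt N)$), and the near-optimal $\nu$ depends on $N$ and may be irregular, so ``standard Gibbs conditioning'' does not directly yield errors of the form $\varepsilon_{L,K}(N)+\varepsilon_L(\delta K)$; you would have to either establish regularity of near-optimizers or discretize $\nu$ first, which is essentially the types argument of the paper (its lower bound needs no concentration input at all, only the counting bound). A minor further slip: polynomially many balls do not suffice to cover $\mathcal P([-M_N,M_N])$ at a fixed $W_2$-scale; the covering number is of order $e^{O((M_N/\eta)\log(M_N/\eta))}$, which is still $e^{o(N)}$ and hence harmless, but the covering must be done at this resolution for the upper bound to close.
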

Let $\eps = 1/N$ and $l_0$ be the smallest integer such that $(1+\eps)^{-l_0} \leq \eps$. Define 
$$I_{l_0} = [- (1+\eps)^{-l_0},  (1+\eps)^{-l_0}], \text{ and } B_{l_0} = \{ i : g_i \in I_{l_0} \}.$$
For any $k > -  l_0$, we set\begin{equation} \label{defIB} I_k =  \{ x \in \R : (1+\eps)^{k-1} \leq  |x| \leq   (1+\eps)^{k} \} \text{ and }B_k = \big\{ i : g_i \in  I_k \big\}.\end{equation}
Let  $\mu_k = |B_k|/m_N$. Let $k_0$ be the smallest integer such that $(1+\eps)^{k_0} \geq M_N$. Since $g_i \in [-M_N,M_N]$ for all $i$, we obtain that for any $k>k_0$, $B_k = \emptyset$.
\begin{lemma}\label{cont} Let $\delta,\eps=\frac{1}{N} \in [0,1]$ and $N/m_{N}\le 2$. On the event $\big\{ | \sum_{i=1}^{m_N} g_i^2 - cN | \leq \delta N\big\}$,
$$\Big| \frac{1}{m_N}\sum_{i=1}^{m_N} f\Big( \frac{g_i}{\sqrt{c}}\Big) - \sum_{ k=-l_0}^{k_0} \mu_k f\Big( \frac{(1+\eps)^k}{\sqrt{c}}\Big) \Big| \leq  {30(c+1) C LK \eps} \,.$$

\end{lemma}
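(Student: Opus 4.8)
Looking at Lemma \ref{cont}, I need to prove that replacing each $g_i$ by the nearest dyadic-scale value $(1+\eps)^k$ introduces only an error of order $\eps$ in the empirical average of $f(g_i/\sqrt c)$.

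\medskip

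The plan is to work scale-by-scale. For each index $i$ with $|g_i| \in I_k$, i.e. $(1+\eps)^{k-1} \le |g_i| \le (1+\eps)^k$, I would use that $f(\sqrt{\cdot})$ is $L$-Lipschitz to bound $|f(g_i/\sqrt c) - f((1+\eps)^k/\sqrt c)| = |f(\sqrt{g_i^2/c}) - f(\sqrt{(1+\eps)^{2k}/c})| \le \frac{L}{c}\,|g_i^2 - (1+\eps)^{2k}|$. Since $g_i^2$ and $(1+\eps)^{2k}$ both lie in $[(1+\eps)^{2k-2}, (1+\eps)^{2k}]$, their difference is at most $(1+\eps)^{2k}(1-(1+\eps)^{-2}) \le (1+\eps)^{2k}\cdot 2\eps$ (using $1-(1+\eps)^{-2}\le 2\eps$). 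Hence the per-index error is at most $\frac{2L\eps}{c}(1+\eps)^{2k} \le \frac{2L\eps}{c}(1+\eps)^2 g_i^2$, bounding $(1+\eps)^{2k}$ by $(1+\eps)^2 g_i^2$. Summing over all $i$ and using the constraint $\sum_i g_i^2 \le cN + \delta N \le (c+\delta)N \le (c+1)N$ and $N/m_N \le 2$, this gives
$$\frac{1}{m_N}\sum_i \Big|f\Big(\frac{g_i}{\sqrt c}\Big) - f\Big(\frac{(1+\eps)^{k(i)}}{\sqrt c}\Big)\Big| \le \frac{2L\eps(1+\eps)^2}{c\, m_N}\sum_i g_i^2 \le \frac{2L\eps(1+\eps)^2(c+1)N}{c\,m_N},$$
which, using $N/m_N\le 2$, $(1+\eps)^2 \le 4$ and $1/c \le K$, is at most a numerical constant times $LK(c+1)\eps$. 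Regrouping the sum $\sum_i f((1+\eps)^{k(i)}/\sqrt c) = \sum_{k=-l_0}^{k_0} |B_k| f((1+\eps)^k/\sqrt c) = m_N \sum_k \mu_k f((1+\eps)^k/\sqrt c)$ yields the stated inequality.

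\medskip

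One subtlety to handle carefully is the smallest scale $k = -l_0$: there $I_{l_0}$ is a full interval around $0$ rather than an annulus, so for $i \in B_{l_0}$ I should compare $f(g_i/\sqrt c)$ directly to $f((1+\eps)^{-l_0}/\sqrt c)$; since $|g_i| \le (1+\eps)^{-l_0} \le \eps$ and $f(0)=0$, both quantities are $O(L\eps^2/c)$ in absolute value by the Lipschitz bound applied from $0$, so this scale contributes an error at most $|B_{l_0}|/m_N$ times $O(L\eps/c)$, which is absorbed into the same estimate (in fact it is even smaller). The main thing to get right is just the chain of elementary inequalities $1-(1+\eps)^{-2}\le 2\eps$ and $(1+\eps)^{2k} \le (1+\eps)^2 g_i^2$ for $i \in B_k$, together with tracking the numerical constant so it comes out below $30$; none of this is a genuine obstacle, it is a routine discretization estimate, and the constant $30$ is generous enough that crude bounds suffice.
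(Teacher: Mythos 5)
Your proof is correct and follows essentially the same route as the paper: bound the per-index error via the Lipschitz property of $f(\sqrt{\cdot})$ by $\frac{L}{c}(1+\eps)^{2k}\big(1-(1+\eps)^{-2}\big)$, treat the bottom scale $k=-l_0$ separately using $(1+\eps)^{-l_0}\le\eps$ and $f(0)=0$, and control $\sum_k \mu_k (1+\eps)^{2k}$ by $\frac{1}{m_N}\sum_i g_i^2 \le \frac{N}{m_N}(c+\delta)\le 2(c+1)$. The constant tracking is also fine, since your crude bounds land comfortably below the stated $30(c+1)CLK\eps$.
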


\begin{proof}
As $f(\sqrt{.})$ is $L$-Lipschitz, we have
\begin{align*}
&\big|\frac{1}{m_N}\sum_{i=1}^{m_N} f\Big( \frac{g_i}{\sqrt{c}}\Big) - \sum_{ k=-l_0}^{k_0} \mu_k f\Big( \frac{(1+\eps)^k}{\sqrt{c}}\Big) \big|  \leq  \frac{1}{m_N} \sum_{k=-l_0}^{k_0} \sum_{i\in B_k } \Big| f\Big( \frac{g_i}{\sqrt{c}}\Big) - f\Big( \frac{(1+\eps)^k}{\sqrt{c}}\Big)\Big|\\
&\qquad \leq \frac{L}{c}
\sum_{ k=-l_0+1}^{k_0}\mu_k (1+\eps)^{2k} \big(1-(1+\eps)^{-2}\big)+ \frac{L}{c} \mu_{-l_0} (1+\eps)^{-2l_0}\,.
\end{align*}
Using the fact that $(1+\eps)^{-l_0} \leq\eps$, we deduce
$$\big|\frac{1}{m_N}\sum_{i=1}^{m_N} f\Big( \frac{g_i}{\sqrt{c}}\Big) - \sum_{ k=-l_0}^{k_0} \mu_k f\Big( \frac{(1+\eps)^k}{\sqrt{c}}\Big) \big| \leq  \frac{3\eps L}{c}\Big( \sum_{k=-l_0+1}^{k_0} \mu_k (1+\eps)^{2k} + 1\Big)\,.$$  
But, on the other hand, on the event $\big\{ | \sum_{i=1}^{m_N} g_i^2 - cN | \leq \delta N\big\}$,
$$ \sum_{k=-l_0+1}^{k_0} \mu_k (1+\eps)^{2(k-1)}\leq \frac{1}{m_N}\sum_{i=1}^{m_N} g_i^2 \leq \frac{N}{m_{N}}(c+\delta)\le 2(c+1).$$
 Thus, we  conclude that
$$\big|  \frac{1}{m_N}\sum_{i=1}^{m_N} f\Big( \frac{g_i}{\sqrt{c}}\Big) - \sum_{ k=-l_0}^{k_0} \mu_k f\Big( \frac{(1+\eps)^k}{\sqrt{c}}\Big) \big|  
\le \frac{3\eps L}{c} \Big( 2(1+\eps)^2\big(c+1\big)+1\Big)\,.$$
\end{proof}
Let $I = \{-l_0,\ldots, k_0\}$ and $\mathcal{L}_N= \big\{ y \in \R_+^I : \sum_{k\in I} y_k =1,  \ \forall k\in I, m_Ny_k\in \N \big\}.$
We know from \cite[Lemma 2.1.6]{DZ}, that for any $y \in \mathcal{L}_N$,
\begin{equation} \label{probtype} (m_N+1)^{-n} e^{ - m_NH( y | \gamma_{M_N}) } \leq \Pp( \mu_k = y_k, \  \forall k \in I) \leq  e^{ - m_NH( y | \gamma_{M_N}) } ,\end{equation}
where $I_k = [(1+\eps)^{k-1},(1+\eps)^k]$, $n= |I| = l_0+k_0+1$,  and with $\gamma$  the standard Gaussian law
$$ H( y | \gamma_{M_N}) = \sum_{ k \in I} y_k \log \frac{y_k}{\gamma_{M_N} (k)},  \quad \text{with } \gamma_{M_N}(k ) =\frac{\gamma(I_k)}{\gamma([-M_N,M_N])}.$$

Let $\mu = (\mu_k)_{k \in I}$ and denote $$\mathcal A_{C_{1},C_{2}}= \big\{ y \in \mathcal{L}_N: c-\delta +C_{1}\eps \leq \sum_{k = -l_0}^{k_0} (1+\eps)^{2k} y_k \leq c+\delta + C_{2}\eps\big\}$$ 
Then, by the previous lemma we see that there exists a finite constant $C=O(KL)$ such that if we denote $\mathcal A_{\eta}=\mathcal A_{-\eta C, \eta C}$, for $N$ large enough,
$$ \big\{ \mu\in \mathcal A_{-}\big\}\subset  \big\{ |\sum_{i=1}^{m_N} g_i^2 - cN | \leq \delta N \big\}
  \subset  \big\{ \mu\in \mathcal A_{+}\big\}\,.$$
  We used here the fact that the $g_{i}$ belong to $[-M_{N},M_{N}]$ and that $(I_{k})_{k}$ is a partition of this set.
 By \eqref{probtype}, we get the upper bound, 
\begin{equation}\label{sup} \E e^{m_N\sum_{ k=-l_0}^{k_0} \mu_k f\big( \frac{(1+\eps)^k}{\sqrt{c}}\big) } 
\Car_{ \mu \in \mathcal{A}_+} \leq  \sum_{ y \in \mathcal{A}_+}  e^{m_N \sum_{ k=-l_0}^{k_0} y_k f\big( \frac{(1+\eps)^k}{\sqrt{c}}\big)} e^{-m_N H(y| \gamma_{M_N})},\end{equation}
whereas for the lower bound,
\begin{equation}\label{inf} \E e^{m_N\sum_{ k=-l_0}^{k_0} \mu_k f\big( \frac{(1+\eps)^k}{\sqrt{c}}\big) } 
\Car_{ \mu \in \mathcal{A}_-} \geq   (m_N+1)^{-n} \sum_{ y \in \mathcal{A}_-}  e^{m_N\sum_{ k=-l_0}^{k_0} y_k f\big( \frac{(1+\eps)^k}{\sqrt{c}}\big)} e^{-m_N H(y| \gamma_{M_N})}.\end{equation}

%
%
%
%
%
%
%
%
%
Let $y\in\mathcal{A}_+$ and define $\nu \in \mathcal{P}(\R)$ by $d\nu(x) = \phi (x)d\gamma (x)$, where
$$ \phi(x) = \sum_{k =-l_0}^{k_0} \Car_{x \in I_k} \frac{y_k}{\gamma(I_k)}.$$
With this notation, we have
$$ H(y|\gamma_{M_N}) = H(\nu|\gamma) - \log \gamma([-M_N,M_N]).$$
With the same argument as in Lemma \ref{cont}, we also have  for $y\in\mathcal{A}_+$,
\begin{equation} \label{contf} \big| \sum_{k=-l_0}^{k_0} y_k f\Big( \frac{(1+\eps)^k}{\sqrt{c}}\Big) - \int f \Big(\frac{x}{\sqrt{c}}\Big) d\nu(x) \big| \leq  C\eps ,
|\int x^2 d\nu(x)-c|\le \delta+ C\eps  \end{equation}
where $C$ only depends on $c$. 
From \eqref{sup} and Lemma \ref{cont}, we deduce that
$$ \frac{1}{N} \log \E e^{\sum_{i=1}^{m_N} f\big( \frac{g_i}{\sqrt{c}}\big) } \Car_{| \sum_{i=1}^{m_N} g_i^2 - cN | \leq \delta N}\leq  \frac{m_N}{N}\sup_{ \nu\in \mathcal{P}([-M_N,M_N] \atop  | \int x^2 d\nu(x) - c| \leq \delta+ C\eps} \big\{ \int f\Big(\frac{x}{\sqrt{c}}\Big) d\nu(x)- H(\nu|\gamma) \big\} +O(\eps)
$$
To complete the proof of the upper bound, we show the following result.

\begin{lemma}
Let $K,L,\delta>0$ such that $\delta <2K^{-1}$ and $\eps=\frac{1}{N}$. There exists a function $s_{L,K}$ depending on $K$ and $L$ such that for any function $f : \R \to \R$ such that $f(0) = 0$ and $f(\sqrt{.})$ is $L$-Lipschitz, and any $K^{-1} <c<K$,
\begin{align*} \sup_{ \nu\in \mathcal{P}([-M_N,M_N] \atop | \int x^2 d\nu(x) - c| \leq \delta+C\eps} \big\{ \int & f\Big(\frac{x}{\sqrt{c}}\Big) d\nu(x) - H(\nu|\gamma) \big\} \\
& \leq \sup_{ \nu\in \mathcal{P}([-M_N,M_N] \atop  \int x^2 d\nu(x) = c} \big\{ \int f\Big(\frac{x}{\sqrt{c}}\Big) d\nu(x)- H(\nu|\gamma) \big\} + s_{L}((\delta+\eps) K),
\end{align*}
where $s_{L}(x) \to 0$ as $x \to 0$.
\end{lemma}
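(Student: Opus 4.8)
The plan is to prove this relaxation estimate by a perturbation argument: from a near‑optimizer $\nu$ of the relaxed problem we build a genuine competitor $\nu'$ for the constrained problem whose value is only $s_L((\delta+\eps)K)$ smaller. Precisely, fix any $\nu\in\mathcal P([-M_N,M_N])$ with $H(\nu\mid\gamma)<+\infty$ (the others contributing $-\infty$ to the left‑hand supremum), and set $d:=\int x^2\,d\nu$, so $|d-c|\le\delta+C\eps$. We shall produce $\nu'\in\mathcal P([-M_N,M_N])$ with $\int x^2\,d\nu'=c$ and
\[ \int f\!\left(\tfrac{x}{\sqrt c}\right)d\nu' - H(\nu'\mid\gamma)\ \ge\ \int f\!\left(\tfrac{x}{\sqrt c}\right)d\nu - H(\nu\mid\gamma) - s_L\big((\delta+\eps)K\big); \]
taking the supremum over $\nu$ on the left then gives the lemma. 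Two elementary facts are used throughout: since $f(0)=0$ and $f(\sqrt{\,\cdot\,})$ is $L$‑Lipschitz, $|f(x/\sqrt c)|=|f(\sqrt{x^2/c})-f(0)|\le Lx^2/c$; and $H(\cdot\mid\gamma)$, being a relative entropy, is nonnegative and convex.

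\emph{First case: $d\ge c$ — dilate downwards.} Let $\nu'$ be the push‑forward of $\nu$ by $x\mapsto\lambda x$ with $\lambda=\sqrt{c/d}\le1$; then $\nu'\in\mathcal P([-M_N,M_N])$ and $\int x^2\,d\nu'=c$. By the change‑of‑variables formula for dilations recalled in \eqref{pushforw},
\[ H(\nu'\mid\gamma)-H(\nu\mid\gamma)=-\log\lambda+\tfrac12(\lambda^2-1)\,d=\tfrac12\log\tfrac{d}{c}+\tfrac12(c-d)\ \le\ \tfrac{d-c}{2c}\ \le\ \tfrac{\delta+C\eps}{2c}, \]
which is of order $(\delta+\eps)K$ (up to an $L$‑dependent constant) because $c>K^{-1}$. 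For the $f$‑term, the Lipschitz bound on $f(\sqrt{\,\cdot\,})$ gives $|f(\lambda x/\sqrt c)-f(x/\sqrt c)|\le L|\lambda^2-1|x^2/c$, hence
\[ \left|\int f\!\left(\tfrac{\lambda x}{\sqrt c}\right)d\nu-\int f\!\left(\tfrac{x}{\sqrt c}\right)d\nu\right|\ \le\ L\,|\lambda^2-1|\,\tfrac{d}{c}\ =\ L\,\tfrac{|c-d|}{c}\ \le\ L\,\tfrac{\delta+C\eps}{c}. \]
Since the change of the objective is (new $f$‑term $-$ old $f$‑term) $-$ (new entropy $-$ old entropy), both controlled above, this case is settled.

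\emph{Second case: $d<c$ — add boundary mass.} Raising the second moment cannot be done by dilation without leaving $[-M_N,M_N]$, so we deposit a little mass near $\pm M_N$. Take $N$ large enough that $M_N^2\ge16K$ (automatic in the applications, where $M_N\to+\infty$), let $\rho_N$ be the uniform probability measure on $[-M_N,-M_N+1]\cup[M_N-1,M_N]$, and set $\nu'=(1-t)\nu+t\rho_N$ with $t=\dfrac{c-d}{\int x^2\,d\rho_N-d}\in[0,1]$, chosen so $\int x^2\,d\nu'=c$. As $\int x^2\,d\rho_N\ge M_N^2/4$ we get $0\le t\le 8(\delta+C\eps)/M_N^2$, while $\rho_N$ has density at most $\tfrac{\sqrt{2\pi}}{2}e^{M_N^2/2}$ with respect to $\gamma$, so $H(\rho_N\mid\gamma)\le M_N^2/2+C_0$ with $C_0$ numerical. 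Convexity of $H(\cdot\mid\gamma)$ and $H(\nu\mid\gamma)\ge0$ give $H(\nu'\mid\gamma)\le(1-t)H(\nu\mid\gamma)+tH(\rho_N\mid\gamma)\le H(\nu\mid\gamma)+O(\delta+\eps)$, and the $f$‑term moves by at most $t\big(\int|f(x/\sqrt c)|\,d\nu+\int|f(x/\sqrt c)|\,d\rho_N\big)\le tL(d+M_N^2)/c=O\big(LK(\delta+C\eps)\big)$, again by $|f|\le Lx^2/c$. Assembling the bounds of the two cases, every error tends to $0$ as $\delta\to0$ and $N\to+\infty$ with $L,K$ fixed, and they may be collected into a single function $s_L((\delta+\eps)K)$; this completes the proof, and hence the upper bound in the proof of Lemma~\ref{unifVaradhan}.

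\emph{Main obstacle.} The only genuine subtlety is the asymmetry between the two cases: lowering the second moment is essentially free, but raising it forces an entropic payment $tH(\rho_N\mid\gamma)$ for placing the extra mass near the endpoints. The crux is that this payment stays $O(\delta+\eps)$: the required weight $t$ decays like $M_N^{-2}$ precisely because $\rho_N$ carries second moment of order $M_N^2$, whereas $H(\rho_N\mid\gamma)$ grows only like $M_N^2/2$, so the product is bounded. (If one does not wish to let $M_N\to+\infty$, the same mixture works as soon as $M_N^2$ exceeds $c$ by a fixed factor, which is anyway implicit once the constrained feasible set is nonempty for all large $N$.)
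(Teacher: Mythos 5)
Your proof is correct, and its first case is exactly the paper's argument: the paper's proof is a one--line dilation, pushing $\nu$ forward by $x\mapsto\lambda x$ so that the second moment becomes $c$, and using \eqref{pushforw} together with the Lipschitz bound $|f(x/\sqrt c)|\le Lx^2/c$ to control both the entropy and the $f$-term by $O((\delta+\eps)K)$. Where you genuinely differ is in the deficit case $\int x^2 d\nu<c$: there a pure dilation has $\lambda>1$ and pushes mass outside $[-M_N,M_N]$, so the rescaled measure is not admissible for the constrained supremum on the right-hand side; the paper silently ignores this, while you repair it by mixing $\nu$ with a small amount $t=O((\delta+\eps)/M_N^2)$ of mass placed uniformly near $\pm M_N$, and you correctly observe that the entropic price $t\,H(\rho_N|\gamma)=O((\delta+\eps))$ stays admissible because $H(\rho_N|\gamma)$ grows only like $M_N^2/2$ while the required weight decays like $M_N^{-2}$ (your constant bookkeeping — $t\le 8(\delta+C\eps)/M_N^2$, entropy convexity with $H(\nu|\gamma)\ge0$, and $tL(d+M_N^2)/c=O(LK(\delta+\eps))$ — checks out). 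The only caveat, which you yourself flag, is that this second construction needs $M_N^2$ to exceed $c$ by a fixed factor (automatic in the paper's application where $M_N\to+\infty$); with that proviso your argument is complete and in fact slightly more careful than the paper's.
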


\begin{proof}
Let $\nu \in \mathcal{P}([-M_N,M_N])$ such that $|\int x^2 d\nu(x) - c|\leq \delta$.  Let $\tilde\nu=h_{\lambda}\#\nu$ with the notations of \eqref{pushf}.With the same arguments that below  \eqref{pushf}, we easily see that
$$
 H(\tilde \nu|\gamma) \leq H(\nu|\gamma) + \frac{1}{2} \delta +\frac{L}{2} \delta K,$$ 
which ends the proof.

\end{proof}

For the lower bound, fix $\nu$ a probability measure on $[-M_N,M_N]$ such that $\nu \ll \gamma$. We set  $\eps = \eps_N$ such that $M_N^2/ m_N\eps_N \to 0$, and we define $I_k$ and $B_k$ as in \eqref{defIB}. Define, for $k\in\{-l_0+1,\ldots, k_0\}$,
$$ y_k = \frac{1}{m_N} \lfloor m_N \nu(I_k) \rfloor,$$ 
and $y_{-l_0} =1- \sum_{k= -l_0+1}^{k_0} y_k$. We claim that for $N$ large enough and independent of $\nu$,
$$ \int x^2 d\nu(x) =c \Longrightarrow y \in \mathcal{A}_-.$$
Indeed, one can check that on one hand
$$ \int x^2 d\nu(x) - \frac{(1+\eps)^2 M_N^2}{m_N \eps}\le  \sum_{k=-l_0}^{k_0} y_k (1+\eps)^{2k} \leq (1+\eps)^2 \int x^2 d\nu(x) + \eps^2\,.$$
 We obtain from \eqref{inf},
\begin{equation}\label{lowerboundV} \log \E e^{m_N\sum_{ k=-l_0}^{k_0} \mu_k f\big( \frac{(1+\eps)^k}{\sqrt{c}}\big) } 
\Car_{ \mu \in \mathcal{A}_-} \geq   (m_N+1)^{-n}   e^{m_N\sum_{ k=-l_0}^{k_0} y_k f\big( \frac{(1+\eps)^k}{\sqrt{c}}\big)} e^{-m_N H(y| \gamma_{M_N})}.\end{equation}
In the next lemma we compare $H(y|\gamma_{M_N})$ and $H(\nu|\gamma)$.

\begin{lemma}\label{approxentro}
$$H(y| \gamma_{M_N}) \leq H(\nu|\gamma) + o_N(1).$$

\end{lemma}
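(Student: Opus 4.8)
The goal is to prove Lemma~\ref{approxentro}, that is, the inequality $H(y\mid\gamma_{M_N})\le H(\nu\mid\gamma)+o_N(1)$, where $y=(y_k)_{k\in I}$ is the discretization of $\nu$ defined by $y_k=m_N^{-1}\lfloor m_N\nu(I_k)\rfloor$ for $k>-l_0$ and $y_{-l_0}=1-\sum_{k>-l_0}y_k$, and $\gamma_{M_N}(k)=\gamma(I_k)/\gamma([-M_N,M_N])$.

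\begin{proof}[Proof of Lemma~\ref{approxentro}]
The plan is to first replace $H(y\mid\gamma_{M_N})$ by $H(y\mid\gamma)$ at the cost of the term $-\log\gamma([-M_N,M_N])$, which tends to $0$ since $M_N\to+\infty$; thus it suffices to bound $H(y\mid\gamma)=\sum_{k\in I}y_k\log\frac{y_k}{\gamma(I_k)}$ by $H(\nu\mid\gamma)+o_N(1)$. Write $p_k:=\nu(I_k)$, so that $H(\nu\mid\gamma)\ge\sum_{k\in I}p_k\log\frac{p_k}{\gamma(I_k)}$ by Jensen's inequality applied on each block $I_k$ (the conditional relative entropy of $\nu$ given the partition $(I_k)$ is at most $H(\nu\mid\gamma)$, with equality iff $\nu$ is piecewise proportional to $\gamma$ on each $I_k$). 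Hence it is enough to compare the two discrete relative entropies $\sum_k y_k\log\frac{y_k}{\gamma(I_k)}$ and $\sum_k p_k\log\frac{p_k}{\gamma(I_k)}$, where $(y_k)$ and $(p_k)$ are two probability vectors on the finite index set $I$ with $|I|=n=l_0+k_0+1=O(\log N)$ and with $|y_k-p_k|\le m_N^{-1}$ for every $k>-l_0$, while $|y_{-l_0}-p_{-l_0}|\le n/m_N$.

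The main step is a quantitative continuity estimate for the function $t\mapsto t\log\frac{t}{a}$ near each $p_k$. For indices $k$ with $p_k$ not too small, say $p_k\ge m_N^{-1/2}$, the perturbation $|y_k-p_k|\le n/m_N$ is tiny relative to $p_k$, so $y_k\log\frac{y_k}{\gamma(I_k)}-p_k\log\frac{p_k}{\gamma(I_k)}=O\big(\frac{n}{m_N}(1+|\log p_k|+|\log\gamma(I_k)|)\big)$, and summing over the $O(\log N)$ such indices and using $|\log\gamma(I_k)|=O(M_N^2)=o(\sqrt N)$ and $m_N\sim N$ yields a total contribution that is $o_N(1)$. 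For indices $k$ with $p_k< m_N^{-1/2}$, one uses that $t\mapsto t\log\frac1t$ is bounded by a modulus of continuity vanishing at $0$: $|y_k\log y_k-p_k\log p_k|$ is controlled by $\omega(p_k+n/m_N)$ with $\omega(s)=s\log(1/s)$ for small $s$, and $\sum_k\big(|y_k\log y_k|+|p_k\log p_k|\big)$ over these indices is $o(1)$ because $\sum p_k\log(1/p_k)\le H(\nu\mid\gamma)+\log\gamma([-M_N,M_N])^{-1}+\sum_k p_k\log(1/\gamma(I_k))$ is finite and the small-$p_k$ tail of an absolutely convergent (in the entropy sense) sum is negligible; the factors $-p_k\log\gamma(I_k)$ and $-y_k\log\gamma(I_k)$ are each $O(M_N^2\sum_{p_k<m_N^{-1/2}}p_k)$ plus the bounded-block contributions, which is $o_N(1)$ by the choice $M_N=o(\sqrt N)$ together with $m_N^{-1/2}\cdot|I|\to0$. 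One also has to separately handle the "leftover mass" index $-l_0$: there $y_{-l_0}$ may differ from $p_{-l_0}$ by up to $n/m_N$, but $\gamma(I_{-l_0})$ is bounded below by a fixed positive constant (it is the $\gamma$-mass of a fixed neighbourhood of $0$ since $(1+\eps)^{-l_0}\le\eps$ but $\ge\eps/(1+\eps)$, so $I_{-l_0}\supset[-\eps/2,\eps/2]$... in fact $\gamma(I_{-l_0})\sim 2(1+\eps)^{-l_0}\gamma(0)$ is small, of order $\eps$), so $|\log\gamma(I_{-l_0})|=O(\log N)$ and the single-term contribution is again $O(\frac{n}{m_N}\log N)=o_N(1)$.

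The main obstacle is bookkeeping: one must track simultaneously the number of blocks $n=O(\log N)$, the truncation level $M_N=o(\sqrt N)$ entering through $|\log\gamma(I_k)|=O(M_N^2)$, and the discretization error $1/m_N$, and check that every error term is $o_N(1)$ uniformly — in particular the interaction between the (possibly many) blocks with very small $\nu$-mass and the large factor $M_N^2$ must be absorbed using the finiteness of the entropy $\sum_k p_k\log(1/p_k)$ rather than a crude count. Assembling these estimates gives $H(y\mid\gamma)\le\sum_k p_k\log\frac{p_k}{\gamma(I_k)}+o_N(1)\le H(\nu\mid\gamma)+o_N(1)$, and adding back $-\log\gamma([-M_N,M_N])=o_N(1)$ completes the proof.
\end{proof}
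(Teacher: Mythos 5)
Your outer skeleton matches the paper's (drop the normalization $\log\gamma([-M_N,M_N])$, use Jensen over the blocks to get $\sum_k\nu(I_k)\log\frac{\nu(I_k)}{\gamma(I_k)}\le H(\nu|\gamma)$, and compare the two discrete relative entropies), but the central comparison step has a genuine gap in its bookkeeping. First, your count of blocks is wrong: $l_0$ is the smallest integer with $(1+\eps)^{-l_0}\le\eps$ and $k_0$ the smallest with $(1+\eps)^{k_0}\ge M_N$, so $n=l_0+k_0+1$ is of order $\eps_N^{-1}\log(M_N/\eps_N)$, not $O(\log N)$; with $\eps=1/N$ it is of order $N\log N$, and even with the slower $\eps_N$ used in the lower bound it can far exceed $\sqrt N$. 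Consequently the estimates you build on top of this count fail: $m_N^{-1/2}|I|\to0$ is false in general, and the small-$p_k$ tail bound $|I|\,m_N^{-1/2}\log m_N$ need not vanish. Second, and more fundamentally, your per-block continuity estimate carries a factor $|\log\gamma(I_k)|$, which for the outer blocks is of order $M_N^2$; summed over all blocks this gives a term of order $n M_N^2/m_N\sim M_N^2\log(M_N/\eps_N)/(\eps_N m_N)$, which does \emph{not} tend to zero under the paper's only hypothesis $M_N^2/(\eps_N m_N)\to0$ (the extra logarithm can ruin it), let alone under $M_N=o(\sqrt N)$ alone. So the error terms you declare $o_N(1)$ are not.

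The paper's proof avoids these large factors entirely by exploiting the sign structure of the discretization: for $k>-l_0$ one has $y_k\le\nu(I_k)$ with defect at most $1/m_N$, and the elementary one-sided inequality $x\log x\le y\log y+(y-x)$ for $0\le x\le y$ (applied at $x=y_k/\gamma(I_k)$, $y=\nu(I_k)/\gamma(I_k)$ and rescaled by $\gamma(I_k)$) gives
$$y_k\log\frac{y_k}{\gamma(I_k)}\le\nu(I_k)\log\frac{\nu(I_k)}{\gamma(I_k)}+\big(\nu(I_k)-y_k\big)\le\nu(I_k)\log\frac{\nu(I_k)}{\gamma(I_k)}+\frac{1}{m_N},$$
with no $\log\gamma(I_k)$ or $M_N^2$ anywhere; the central block $k=-l_0$ contributes at most $(k_0+l_0)/m_N$, and the total error $2(k_0+l_0)/m_N$ is $o_N(1)$ precisely because $k_0=O(\log(M_N)/\eps_N)$, $l_0=O(\log(1/\eps_N)/\eps_N)$ and $M_N^2/(m_N\eps_N)\to0$. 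If you want to salvage your decomposition, note that in the difference $\sum_k(p_k-y_k)\log\gamma(I_k)$ the terms with $k>-l_0$ have $p_k-y_k\ge0$ and $\log\gamma(I_k)\le0$, hence are favorable and can simply be discarded; without using this one-sidedness, the $M_N^2$ factors cannot be absorbed.
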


\begin{proof}By definition we have,
\begin{equation}\label{defentrop}  H(y| \gamma_{M_N})=  \sum_{k=-l_0}^{k_0} y_k\log \frac{y_k}{\gamma(I_k)} + \log \gamma([-M_N, M_N]).\end{equation}
Let $f(x) = x\log x$ for $x>0$ and $f(0)=0$. We claim that
\begin{equation}\label{conventropy} \forall 0\leq x<y, \ f(x) \leq f(y) +(y-x).\end{equation}
Indeed, either $x>e^{-1}$ and $f(x)\leq f(y)$ since $f$ is increasing on $[e^{-1}, +\infty)$. Or $x<e^{-1}$ and by convexity,
$$ f(x) \leq f(y) + f'(x)(x-y).$$
Since  $|f'(x)| \leq 1$ we get the claim.
Note that we have for any $k>-l_0$,
$$ \nu(I_k)-\frac{1}{m_N}< y_k\leq \nu(I_k),
\mbox{ and  }
 \nu(I_{-l_0}) \leq y_{-l_0} < \nu(I_{-l_0}) +\frac{k_0+l_0}{m_N}.$$
Thus we deduce from \eqref{conventropy} that
\begin{align*}
 H(y| \gamma_{M_N}) & \leq \sum_{k=-l_0}^{k_0} \nu(I_k) \log \frac{\nu(I_k)}{\gamma(I_k)} + \sum_{k= -l_0+1}^{k_0} \gamma(I_k) \frac{1}{\gamma(I_k) m_N}  + \gamma(I_{l_0}) \frac{k_0+l_0}{\gamma(I_{-l_0}) m_N }+o_N(1)\\
& \leq \sum_{k=-l_0}^{k_0} \nu(I_k) \log \frac{\nu(I_k)}{\gamma(I_k)} +  \frac{2(k_0+l_0)}{m_N} +o_N(1).
\end{align*}
We have $k_0 = O(\log(M_N)/\eps_N)$ and $l_0 = O(\log (1/\eps_N)/\eps_N)$. Since $M_N^2/m_N\eps_N \to0$, we get
$$  H(y| \gamma_{M_N})   \leq \sum_{k=-l_0}^{k_0} \nu(I_k) \log \frac{\nu(I_k)}{\gamma(I_k)} +  \frac{2(k_0+l_0)}{m_N} +o_N(1).$$
Since $f : x \mapsto x\log x$ is convex, we complete the proof  by  using Jensen's inequality which yields
$$
 \sum_{k=-l_0}^{k_0} \nu(I_k) \log \frac{\nu(I_k)}{\gamma(I_k)}  = \sum_{k=-l_0}^{k_0} \gamma(I_k)  f\Big (\frac{1}{\gamma(I_k)}\int_{I_k} \frac{d \nu}{d\gamma } d \gamma\Big) \leq \sum_{k=-l_0}^{k_0} \int_{I_k}  \frac{d \nu}{d\gamma }  \log  \frac{d \nu}{d\gamma }  d\gamma.
$$

\end{proof}
Moreover, we can compare $\int f(x/\sqrt{c}) d\nu(x)$ and $\sum_{k=-l_0} y_k f((1+\eps)^k/\sqrt{c})$.

\begin{lemma}
\label{contfinf}
$$\Big| \int f\Big( \frac{x}{\sqrt{c}}\Big) d\nu(x) - \sum_{k=-l_0}^{k_0} y_k f\Big( \frac{(1+\eps)^k}{\sqrt{c}}\Big)\Big |\leq \varepsilon_{L,K}(N),$$
where $\varepsilon_{L,K}(N)\to0$ as $N\to +\infty$.
\end{lemma}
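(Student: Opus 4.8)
The plan is to follow the same discretization scheme already used in Lemma~\ref{cont}, decomposing the left-hand side into a discretization error and a rounding error. Concretely, I would write it as $R_1+R_2$ where
$$R_1 = \sum_{k=-l_0}^{k_0}\int_{I_k}\Big(f\big(\tfrac{x}{\sqrt c}\big)-f\big(\tfrac{(1+\eps_N)^k}{\sqrt c}\big)\Big)\,d\nu(x),\qquad R_2=\sum_{k=-l_0}^{k_0}\big(\nu(I_k)-y_k\big)\,f\Big(\tfrac{(1+\eps_N)^k}{\sqrt c}\Big),$$
so that $R_1$ accounts for replacing each $x\in I_k$ by the endpoint $(1+\eps_N)^k$, and $R_2$ for replacing the exact mass $\nu(I_k)$ by the rational $y_k=\tfrac1{m_N}\lfloor m_N\nu(I_k)\rfloor$.

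For $R_1$ the argument is identical to that of Lemma~\ref{cont}: since $f(\sqrt{\cdot})$ is $L$-Lipschitz and $f(0)=0$, for $x\in I_k$ with $k>-l_0$ one has $|f(x/\sqrt c)-f((1+\eps_N)^k/\sqrt c)|\le \tfrac Lc\big((1+\eps_N)^{2k}-(1+\eps_N)^{2(k-1)}\big)\le \tfrac{3\eps_N L(1+\eps_N)^2}{c}x^2$, while on $I_{-l_0}$ one uses $|f(x/\sqrt c)|\le \tfrac Lc(1+\eps_N)^{-2l_0}\le \tfrac Lc\eps_N^2$. Summing over $k$ and using $\int x^2\,d\nu(x)=c$ together with $K^{-1}<c<K$ gives $|R_1|=O_{L,K}(\eps_N)$, uniformly in $\nu$.

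The hard part will be $R_2$. Here I would use the rounding bounds recorded just before the lemma, namely $\nu(I_k)-\tfrac1{m_N}<y_k\le \nu(I_k)$ for $k>-l_0$ and $\nu(I_{-l_0})\le y_{-l_0}<\nu(I_{-l_0})+\tfrac{k_0+l_0}{m_N}$, together with $|f((1+\eps_N)^k/\sqrt c)|\le \tfrac Lc(1+\eps_N)^{2k}$. For the indices $k>-l_0$, summing the geometric series in $(1+\eps_N)^{2k}$ and using that $k_0$ is minimal with $(1+\eps_N)^{k_0}\ge M_N$ (hence $(1+\eps_N)^{2k_0}\le (1+\eps_N)^2M_N^2$) yields
$$\sum_{k>-l_0}\big|\nu(I_k)-y_k\big|\,\Big|f\Big(\tfrac{(1+\eps_N)^k}{\sqrt c}\Big)\Big|\le \frac{L}{c\,m_N}\sum_{k>-l_0}(1+\eps_N)^{2k}\le \frac{L(1+\eps_N)^4}{2c}\cdot\frac{M_N^2}{\eps_N m_N},$$
which tends to $0$ because $\eps_N$ was chosen so that $M_N^2/(m_N\eps_N)\to 0$. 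For the single remaining term $k=-l_0$ I would use $|f((1+\eps_N)^{-l_0}/\sqrt c)|\le \tfrac Lc\eps_N^2$ together with the crude bound $k_0+l_0=O\big((\log M_N+\log(1/\eps_N))/\eps_N\big)$ (from the definitions of $k_0,l_0$ and $\log(1+\eps_N)\ge \eps_N/2$), producing a contribution $O_{L,K}\big(\eps_N(\log M_N+\log(1/\eps_N))/m_N\big)=o(1)$ since $M_N=o(\sqrt N)$ and $m_N\sim N$. Combining, $|R_1|+|R_2|\le \varepsilon_{L,K}(N)$ with $\varepsilon_{L,K}(N)\to 0$ uniformly in the choice of $\nu$, which is the claim. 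The only delicate point is precisely the interplay in $R_2$ between the number of intervals ($\sim 1/\eps_N$) and the geometric growth of $(1+\eps_N)^{2k}$ up to $k_0\sim \log M_N/\eps_N$: balancing these is exactly what forces the hypothesis $M_N^2/(m_N\eps_N)\to 0$, and I expect this to be the main obstacle to keep the estimate uniform in $\nu$.
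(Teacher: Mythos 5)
Your proposal is correct and follows essentially the same route as the paper: the same split into a discretization error (handled exactly as in Lemma \ref{cont} via the $L$-Lipschitz property of $f(\sqrt{\cdot})$ and $\int x^2\,d\nu=c$) plus a rounding error, with the rounding error controlled by $|\nu(I_k)-y_k|\le 1/m_N$ for $k>-l_0$, the separate bound at $k=-l_0$, the geometric-series estimate $\sum_k(1+\eps_N)^{2k}=O(M_N^2/\eps_N)$, and the hypothesis $M_N^2/(m_N\eps_N)\to 0$. No gap to report.
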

\begin{proof}
As $f(\sqrt{.})$ is $L$-Lipschitz, we have on one hand using the same argument as in the proof of Lemma \ref{cont},
$$\Big| \int f\Big( \frac{x}{\sqrt{c}}\Big) d\nu(x) - \sum_{k=-l_0}^{k_0} \nu(I_k) f\Big( \frac{(1+\eps)^k}{\sqrt{c}}\Big)\Big| \leq \frac{3L\eps}{c}\Big( \int x^2 d\nu(x) +1\Big).$$
Therefore,
\begin{equation}\label{contnu} \Big| \int f\Big( \frac{x}{\sqrt{c}}\Big) d\nu(x) - \sum_{k=-l_0}^{k_0} \nu(I_k) f\Big( \frac{(1+\eps)^k}{\sqrt{c}}\Big)\Big| \leq \varepsilon_{L,K}(N),\end{equation}
where $\varepsilon_{L,K}(N) \to 0$ as $N\to +\infty$.
On the other hand, as $|\nu(I_k)-y_k|\leq 1/m_N$ for any $k>-l_0$ and $|\nu(I_{-l_0})-y_{-l_0}|\leq (k_0+l_0)/m_N$, we get since $f(0)=0$ and $f(\sqrt{.})$ is $L$-Lipschitz
\begin{eqnarray*}
\sum_{k=-l_0}^{k_0} \Big|(y_k -\nu(I_k))  f\Big(\frac{(1+\eps)^k}{\sqrt{c}}\Big)\Big| & \leq& \frac{L}{c m_N} \sum_{k=-l_0+1}^{k_0} (1+\eps)^{2k} +\frac{L(k_0+l_0)}{c m_N} (1+\eps)^{-2l_0},\\
&\leq &\frac{\kappa L}{c m_N}\Big( \frac{M_N^2}{\eps_N} +(k_0+l_0)\eps_N^2 \Big)\end{eqnarray*}
As $M_N^2/m_N  \to 0$ and $k_0 = O(\log(M_N)/\eps_N)$ and $l_0 = O(\log (1/\eps_N)/\eps_N)$, we deduce that
$ \frac{(k_0+l_0)}{m_N}\eps_N = o_N(1).$
Combining the above estimate with \eqref{contf}, we get the claim.
\end{proof}
Coming back to \eqref{lowerboundV}, using the results of Lemmas \ref{approxentro} and \ref{contfinf}, we deduce 
$$   \E e^{m_N\sum_{ k=-l_0}^{k_0} \mu_k f\big( \frac{(1+\eps)^k}{\sqrt{c}}\big) } 
\Car_{ \mu \in \mathcal{A}_-} \geq   (m_N+1)^{-n}   e^{m_N\int f(\frac{x}{\sqrt{c}}) d\nu(x) -m_Ng_{L,\delta}(N)} e^{-m_N (H(\nu| \gamma) +o_N(1) )},$$
which gives at the logarithmic scale,
$$  \frac{1}{N}\log \E e^{m_N\sum_{ k=-l_0}^{k_0} \mu_k f\big( \frac{(1+\eps)^k}{\sqrt{c}}\big) } 
\Car_{ \mu \in \mathcal{A}_-} \geq \frac{m_N}{N}\Big(\int f\Big(\frac{x}{\sqrt{c}}\Big) d\nu(x)  -H(\nu| \gamma)\Big) -\varepsilon_{L,K}(N),$$
We conclude by  optimizing over the choice of  $\nu \in \mathcal{P}([-M_N,M_N])$, such that $ \int x^2 d\nu =c $.

\begin{acknowledgement}
We would like to thank Ofer Zeitouni for stimulating discussions which helped us preparing the present paper. We are also grateful to David  Coulette for the numerous simulations which contributed to build our understanding of the optimization problem arising from our variational formula for the annealed spherical integral.
\end{acknowledgement}

\bibliographystyle{spmpsci} 
\bibliography{LDPsubVecteursCMP}

\end{document}